\def\titlerunning#1{\gdef\titrun{#1}}
\def\author#1{\gdef\autrun{\def\and{\unskip, }#1}\gdef\@author{#1}}
\def\address#1{{\def\and{\\\hspace*{18pt}}\renewcommand{\thefootnote}{}%
\footnote {#1}}%
\markboth{\autrun}{\titrun}}
\def\email#1{e-mail: #1}
\def\subjclass#1{{\renewcommand{\thefootnote}{}%
\footnote{\emph{Mathematics Subject Classification (2010):} #1}}}
\def\keywords#1{\par\medskip
\noindent\textbf{Keywords.} #1}
\newtheorem{thm}{Theorem}[section]
\newtheorem{cor}[thm]{Corollary}
\newtheorem{lem}[thm]{Lemma}
\newtheorem{proposition}[thm]{Proposition}
\theoremstyle{definition}
\newtheorem{defin}[thm]{Definition}
\newtheorem{notation}[thm]{Construction}
\numberwithin{equation}{section}
\begin{document}


\baselineskip=17pt


\titlerunning{Non-computable basins of attraction}

\title{An analytic system with a computable hyperbolic sink whose basin of attraction is non-computable}

\author{Daniel S.~Gra\c{c}a
\and 
Ning Zhong}

\date{}

\maketitle

\address{F1. Gra\c{c}a: CEDMES/FCT, Universidade do Algarve, Portugal
      \& SQIG - Instituto de Telecomunica\c{c}\~{o}es, Portugal; \email{dgraca@ualg.pt}
\and
F2. Zhong: DMS, University of Cincinnati, U.S.A.; \email{zhongn@uc.edu}}

\subjclass{Primary: 03D78; Secondary: 68Q17}


\begin{abstract}
In many applications one is interested in finding the stability regions (basins of attraction) of some stationary states (attractors). In this paper we show that one cannot compute, in general, the basins of attraction of even very regular systems, namely analytic systems with hyperbolic asymptotically stable equilibrium points. To prove the main theorems, a new method for embedding a discrete-time system into a continuous-time system is developed.

\keywords{Computability with real numbers, basins of attractions, asymptotically stable equilibrium points}
\end{abstract}

\section{Summary of the paper}

In the study of dynamical systems, asymptotically stable equilibrium points and closed orbits are important since they represent stationary or repeatable behavior. In fact, to determine stability regions (basins or domains of attraction) of asymptotically stable equilibrium points is a fundamental problem in nonlinear systems theory with great importance in a number of applications such as in the fields of engineering (electric power system, chemical reactions), ecology, biology, economics, etc. In the late 1960's there was a surge of theoretical studies analyzing properties of such domains. In recent years much effort has been devoted to development of numerical methods for estimation of these domains, which has resulted in numerous numerical algorithms (see e.g.~\cite{Che04}, \cite{MBB10}). In contrast, relatively little theoretical work on computability of these domains exists.


It is known that these domains are non-computable in some instances. For example, by ``gluing'' different dynamics over different regions of the space, one may obtain $C^k$-systems (even $C^{\infty}$-systems) with domains of attraction which encode non-computable problems and which are thus non-computable \cite{Moo91}, \cite{Zho09}.

On the other hand, it is also known that for hyperbolic rational functions, there are (polynomial-time)
algorithms for computing basins of attraction and their complements
(Julia sets) with arbitrary precision \cite{Bea91}; in other words, basins of
attraction and Julia sets of hyperbolic rational functions are
(polynomial-time) computable.\nocite{GZB07}

So the question is, where does the boundary between computability and non-computability of basins of attraction lie? In particular, the question of computability remains open for (analytic) non-rational systems. Analyticity imposes a strong degree of regularity, which is higher than that of $C^{\infty}$ continuity, since the local behavior of an analytic function
determines how it behaves globally; thus no ``gluing'' of different dynamics in different regions is allowed. Another common requirement of regularity found in dynamical systems theory is hyperbolicity (see e.g.~\cite{Per01}). In short, hyperbolicity requires that near attractors (stationary states), the flow must converge to these attractors at a (at least) uniform rate to avoid pathological behavior due to a convergence which is ``too slow''. In particular, if we consider the simplest type of attractors, i.e.~asymptotically stable equilibrium points, hyperbolicity implies that near an (hyperbolic) asymptotically stable equilibrium point the flow must converge to this point at a rate which is equal to or greater than $e^{-\lambda t}$, for some contraction rate $\lambda>0$.


In this paper we show that: \\

\textbf{Main Theorem.} There is an analytic and computable dynamical
system with a computable hyperbolic sink $s$ such that the basin of attraction
of $s$ is not computable (technically more precise results are given in Theorem
\ref{Th:Mainresultprecise} and Corollary \ref{Cor:Main} below). \\

Thus our result implies that no algorithmic characterization
exists, in general, for a given basin of attraction even if an high degree of regularity (analicity + hyperbolicity) is imposed on the system and even if only the simplest type of attractors (equilibrium points) is considered.

In the case of discrete-time systems, we prove the
result by encoding a well-known non-decidable problem into the basin
of attraction of $s$. In the case of continuous-time systems, we
prove the result by embedding a discrete-time system with a
non-computable basin of attraction into a continuous-time system.
The standard suspension method (see Smale \cite{Sma67}, Arnold and Avez \cite{AA68})
for embedding  a discrete-time system into a continuous-time
system is not sufficient for our case; we instead develop a new method.

The structure of the paper is as follows. In Section \ref{Sec:Intro} we
briefly mention related work and then discuss basic notions from dynamical
systems and from computability with real numbers. We also delve into our
results in a more precise way (see Theorems \ref{Th:Mainresultprecise} and
Corollary \ref{Cor:Main}). Then in Section \ref{Sec:Mainresults} we present
two formulations of Theorems \ref{Th:Mainresultprecise}: one for the
discrete-time case and another for the continuous-time case. Each case will be
proved separately in Sections 4 and 5. Some proofs become quite technical, so
we provide a brief guide in Section \ref{Sec:Results roadmap}.

\section{Introduction}\label{Sec:Intro}

\subsection{Related work}

Analytic dynamical systems lie between polynomial and $C^{\infty}$ systems.
As noted in the summary, basins of attraction of dynamical systems generated by
hyperbolic polynomials are polynomial-time computable. On the other hand, it
is shown in \cite{Zho09} that for $C^{\infty}$ dynamical systems, basins of
attraction of hyperbolic sinks can be non-computable. Thus the case of analytic
systems is the cutoff point in terms of computability of basins.

By the main theorem of this paper, there is a dynamical system
generated by an analytic function $f$ with a hyperbolic sink $s$
such that the basin of attraction of $s$ is non-computable, even
though for the corresponding systems generated by any initial finite
segment of the Taylor series of $f$, the basins of $s$ are
polynomial-time computable.  This seems a bit surprising since it is
well known that, in contrast to the $C^{\infty}$ case, analytic
functions enjoy pervasive computability; for example, the sequence
$\{ f^{(n)}\}$ of derivatives is computable if the computable
function $f$ is analytic but may fail to be so if $f$ is only
$C^{\infty}$. On the other hand, the non-computability in the
analytic case cannot be proved as for the $C^{\infty}$ case, because
the proof for the $C^{\infty}$ case makes crucial use of the fact
that a non-constant $C^{\infty}$ function can take a constant value
on a non-empty open subset; such a function cannot be analytic.  The
idea underlying the proof of the $C^{\infty}$ case is that, starting
with a non-computable open set, one constructs two sequences of
$C^{\infty}$ functions such that one contracts on the open set and
the other expands on the complement of the set. Then one glues the
two sequences together to produce a $C^{\infty}$ system such that
the non-computable open set gives rise to the desired non-computable
basin \cite{Zho09}. This construction will not work for analytic
systems, for the local behavior of an analytic function determines
how it behaves globally. A completely different construction is
developed in this paper in order to show that basins of attraction
can be non-computable, even in the case of analytic systems.

An interesting related result can be found in \cite{BY06}, 
which studies the computability of Julia sets
defined by quadratic maps. The authors show that Julia sets are, in
general, not computable, although hyperbolic Julia sets are computable, in a very
efficient manner (in polynomial time) \cite{Bra05}, \cite{Ret05}. This somehow
suggests that hyperbolicity makes problems computationally simpler. Since hyperbolicity is not
enough to guarantee computability in our case, it seems that
computing basins of attraction may even be more challenging than
computing Julia sets.
Note that hyperbolicity also makes the problem of
computing basins of attraction computationally simpler \cite{Zho09} (there are
several degrees of non-computability: it is possible to say that a
non-computable problem is \textquotedblleft harder\textquotedblright\ than
another problem -- cf.\ \cite{Odi89}, \cite{Odi99}), but not enough so that
computability is achieved.

\subsection{Dynamical systems and hyperbolicity}

We recall that there are two broad classes of dynamical systems: discrete-time
and continuous-time (for a general definition of dynamical systems,
encompassing both cases, see \cite{HS74}). A discrete-time dynamical system is
defined by the iteration of a map $f:\mathbb{R}^{n}\rightarrow\mathbb{R}^{n}$,
while a continuous-time system is defined by an ordinary differential equation
(ODE) $x^{\prime}=f(x)$. Common to both classes of systems is the notion of
trajectory: in the discrete-time case, a \emph{trajectory} starting at the
point $x_{0}$ is the sequence%
\[
x_{0},f(x_{0}),f(f(x_{0})),\ldots,f^{[k]}(x_0),\ldots
\]
where $f^{[k]}$ denotes the kth iterate of f, while in the
continuous time case it is the solution to the following
initial-value problem%
\[
\left\{
\begin{array}
[c]{l}%
x^{\prime}=f(x)\\
x(0)=x_{0}%
\end{array}
\right.
\]
Trajectories $x(t)$ may converge to some \emph{attractor}.
Attractors are invariant sets in the sense that if a trajectory
reaches an attractor, it stays there.
Given an attractor $A$, its \emph{basin of attraction} is the set%
\[
\{x\in\mathbb{R}^{n}|\text{ the trajectory starting at }x\text{
converges to }A \text{ as } t\rightarrow\infty \}
\]
Attractors come in different varieties: they can be points, periodic orbits,
strange attractors, etc. In this paper we focus on the simplest type of
attractors: single points (also called \textit{fixed points}). If there is a
neighborhood around a fixed point $s$ which is contained in the basin of
attraction of $s$ (i.e.\ every trajectory starting in this neighborhood will
converge to $s$), then $s$ is called a sink.

Among smooth dynamical systems, hyperbolic systems play a central role.
In the case of a sink $s$, hyperbolicity amounts to
requiring a uniform rate of convergence at which every trajectory starting in a neighborhood of $s$
converges to $s$. More details can be found in \cite{HS74}.

\subsection{Introduction to computability over real numbers\label{Sec:Computability}}

\textbf{Classical Computability.} At the heart of computability
theory lies the notion of \emph{algorithm}. Roughly speaking, a
problem is \emph{computable} if there is an algorithm that solves
it. For example, the problem of finding the greatest common divisor
of two positive integers is computable since this problem can be
solved using Euclid's algorithm. On the other hand, there are many
non-computable problems. For example, Hilbert's 10th problem is not
solvable by any algorithm. 
But how can we show that no algorithm solves this problem? To answer
such questions, the notion of algorithm has to be formalized.
The formal notion of algorithm makes use of \emph{Turing machines}
(see e.g.\ \cite{Sip05}, \cite{Odi89} for more details), which were
introduced in 1936 by Alan Turing \cite{Tur36}. The very simple
behavior of a Turing machine (TM for short), which mimics human
beings executing an algorithm, and a series of equivalence results
between different models of computation led to the following
conclusion (\textit{Church-Turing thesis}): problems solvable by
algorithms (ordinary computers) are exactly those solvable by Turing
machines. Notice that the Church-Turing thesis cannot be proved --
it simply formalizes the notion of algorithm -- but\ it is
unanimously accepted by the scientific community.

Classical computability is carried out over discrete structures.
Formal definitions usually involve strings of symbols (e.g.\ binary
words), but they can  be stated equivalently using natural numbers
as follows. Let $\mathbb{N}$, $\mathbb{Z}$, $\mathbb{Q}$, and
$\mathbb{R}$ denote the set of natural numbers (including 0),
integers, rational numbers, and real numbers, respectively.

\begin{defin}
A function $f:\mathbb{N}^{k}\rightarrow\mathbb{N}$ is computable if
there is a Turing machine that on input
$(x_{1},\ldots,x_{k})\in\mathbb{N}^{k}$ outputs the value
$f(x_{1},\ldots,x_{k})$.
\end{defin}

In many applications, given an input $(x_{1},\ldots,x_{k})\in\mathbb{N}^{k}$,
we are only interested in a \textquotedblleft yes/no\textquotedblright%
\ output. This can be done, without loss of generality, by assuming that
$f(x_{1},\ldots,x_{k})=1$ is a \textquotedblleft yes\textquotedblright\ answer
and $f(x_{1},\ldots,x_{k})=0$ is a \textquotedblleft no\textquotedblright%
\ answer.\smallskip

\textbf{Computing with real numbers.} In order to study
computability problems over $\mathbb{R}^{n}$, one has to generalize
the previous notions. Turing, in his seminal paper \cite{Tur36},
already provided an approach: code each real number as its decimal
expansion and then carry out computations over this symbolic
representation. Although simple, this approach does not work as
noted by Turing himself in a later paper \cite{Tur37}, since trivial
functions like $x\mapsto3x$ would not be computable in this setting.
This is because the decimal representation does not preserve the
topology of the real line: $0.9999\ldots$ and $1.0000\ldots$ are far
from each other if considered as strings of symbols, yet they
represent the same real number. Details can be found in
\cite{Wei00}, \cite{BHW08}.

Nevertheless, the previous idea can still be used if we use a representation
of real numbers that preserves the topology of the real line. Here lies the
foundations for computable analysis, which draws on both computability theory
and topology, and allows one to compute over topological spaces in addition to
$\mathbb{N}$. In particular, for $\mathbb{R}$, among many different equivalent
possibilities, we use the following formulation: the function $\phi
:\mathbb{N\rightarrow Q}$ is an \emph{oracle} (also called $\rho$-name) for a
real number $x$ if $\left\vert x-\phi(n)\right\vert \leq2^{-n}$ for all
$n\in\mathbb{N}$. In other words, $\phi$ provides rationals which approximate
$x$ within any desired precision. We can then define \emph{oracle Turing
machines} by giving the TM access to an oracle: at any step of the computation
the TM can query the value $\phi(n)$ for any $n$. In this way one introduces
computability with real numbers.

\begin{defin}
A number $x\in\mathbb{R}$ is \emph{computable} if there is a TM which computes
an oracle $\phi$ for $x$: on input $n$, the machine outputs $\phi(n)$.
\end{defin}

Intuitively, a real number $x$ is computable if there exists a TM
which can compute a rational approximation of $x$ to any desired
precision. All familiar real numbers (rational numbers, algebraic
numbers, $e$, $\pi$, etc.) are computable. Notice that there are
only countably many Turing machines. Thus there are only countably
many computable real numbers.

What about computable functions? From the above considerations, one cannot
expect that a computable function $f:\mathbb{R\rightarrow R}$ generates only
computable real numbers. What we can expect is that there is an oracle TM such
that, given an oracle coding the input, the TM outputs $f(x)$.

\begin{defin}
A function $f:\mathbb{R\rightarrow R}$ is \emph{computable} if there
is an oracle TM $M$ such that if $\phi$ is an oracle for $x$,
then $M^\phi$ computes an oracle for
$f(x)$: on input $n$,  $M^{\phi}$ outputs a rational $q$ such that
$\left\vert q-f(x)\right\vert <2^{-n}$.
\end{defin}

Thus a function is computable if, given an oracle for the input, one
can compute the output to any desired precision. Elementary
functions from analysis are computable functions. These notions can
be extended in an obvious way to $\mathbb{R}^{n}$.

Finally, because we will study the computability of basins of
attraction of hyperbolic sinks (which are open sets of
$\mathbb{R}^{n}$), we need to define computable open subsets of
$\mathbb{R}^{n}$. This can be done by encoding the set into an
oracle (using the bases which generate the upper and lower Fell
topology) and showing that there exists a TM which computes this
oracle (see \cite{BHW08} for more details). However, for practical
reasons, we use instead the following equivalent definition.

\begin{defin}
An open set $O\subseteq\mathbb{R}^{n}$ is computable if the distance
function $d_{\mathbb{R}^{n}\backslash
O}:\mathbb{R}^{n}\rightarrow\mathbb{R}$ is computable, where
\[
d_{\mathbb{R}^{n}\backslash O}(x)=\inf_{y\in\mathbb{R}^{n}\backslash
O}d(x,y).
\]

\end{defin}

Intuitively an open set $O$ is computable if $O$ can be generated by
a computer with arbitrary precision. In this paper we'll show the
following result.

\begin{thm}
[main result]\label{Th:Mainresultprecise}There exists a dynamical
system defined by an analytic and computable function $f$, which
admits a computable hyperbolic sink $s$, such that the basin of
attraction of $s$ is not computable.
\end{thm}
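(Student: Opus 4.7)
The plan is to encode an undecidable set into basin membership. For a computable dynamical system with a computable hyperbolic sink $s$, the basin $B$ is automatically semi-decidable: one checks that some iterate of $x$ eventually enters a computable trapping neighbourhood on which hyperbolicity certifies uniform contraction. The only way to block computability of~$B$ is therefore to prevent the complement from being semi-decidable. I would fix a c.e.\ but non-recursive set $H\subseteq\mathbb N$, say the self-halting set, together with a total computable witness $g:\mathbb N^{2}\to\{0,1\}$ such that $n\in H\iff\exists t\,g(n,t)=1$, and arrange countably many separated rational ``test points'' $p_{n}$ so that $n\in H$ forces $p_{n}\in B$ with a uniform margin (e.g.\ $d_{\mathbb R^{k}\setminus B}(p_{n})\ge\tfrac12$) while $n\notin H$ forces $p_{n}$ to stay uniformly far from $B$. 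Computability of $B$ would then allow us to approximate $d_{\mathbb R^{k}\setminus B}(p_{n})$ to precision $\tfrac14$ and decide $n\in H$, a contradiction.

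For the discrete-time formulation (the core of the argument), I would place the sink at the origin, let $f$ coincide with the linear contraction $x\mapsto\tfrac12 x$ on a small neighbourhood of~$0$, and design the remaining dynamics so that the orbit of $p_{n}$ is driven into this neighbourhood exactly when some $t$ with $g(n,t)=1$ materialises. The main obstacle is the rigidity of analyticity: one cannot paste locally-defined behaviours together with bump functions, as emphasised in the introduction. I would circumvent this by representing $f$ as a globally convergent series
\[
 f(x)=f_{0}(x)+\sum_{n,t\ge 1} g(n,t)\,a_{n,t}\,\Phi_{n,t}(x),
\]
where $f_{0}$ is an entire function realising the linear contraction near the sink and a tame background elsewhere, each $\Phi_{n,t}$ is an entire function with Gaussian-type localisation around an auxiliary site encoding ``the computation of $M_{n}$ has reached step $t$'', and the rational weights $a_{n,t}$ decay fast enough (compared with the growth of $\Phi_{n,t}$ and its derivatives on every fixed disc) to force uniform convergence of the series and all its derivatives on compacta. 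Since the coefficients $g(n,t)\,a_{n,t}$ are total computable, $f$ is entire and computable; the undecidable content hides in the \emph{pattern} of nonzero terms, not in the computability of individual coefficients.

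The remaining work is dynamical: with the $p_{n}$ on a sparse grid and the cells of the $\Phi_{n,t}$ well separated, one verifies (i) the origin stays hyperbolic with contraction rate close to $\tfrac12$, since the cumulative perturbation from the sum is globally small near~$0$; (ii) the orbit of $p_{n}$ is knocked into the contracting neighbourhood as soon as the first $g(n,t)=1$ appears; and (iii) otherwise the orbit stays bounded and bounded away from this neighbourhood. The hardest step is balancing analytic estimates over the whole space against dynamical estimates on individual orbits; the super-polynomial decay of entire Gaussian-like functions off their centres is what makes a single analytic function implement this switchboard behaviour. For the continuous-time version (Corollary~\ref{Cor:Main}) the usual Smale/Arnold--Avez suspension is inadequate---it neither preserves analyticity on $\mathbb R^{n}$ nor manufactures a single isolated hyperbolic sink---so one must build a new analytic suspension: realise the discrete map $f$ on $\mathbb R^{k}$ as a return map of an analytic vector field on $\mathbb R^{k+d}$ whose additional directions contract uniformly into the section, yielding an analytic computable ODE with a genuine hyperbolic sink whose basin is the suspended image of the non-computable discrete basin, and therefore inherits non-computability.
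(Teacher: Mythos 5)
Your reduction skeleton --- encode a c.e.\ non-recursive set into basin membership of well-separated test points, with a uniform margin on the positive side, then use computability of the distance function $d_{\mathbb{R}^k\setminus B}$ to decide membership --- is exactly the argument the paper uses for the discrete-time case (proof of Theorem \ref{Th:Main_discrete}). The genuine gap is in your construction of the analytic map. The paper's central technical requirement is the uniform non-expansion/contraction property near the relevant integer configurations, condition \eqref{Eq:Th_main} of Theorem \ref{Th:simulation}, obtained by explicit interpolation combined with the error-contracting gadgets $\sigma$, $\xi_2$, $\xi_3$; it is this property that lets a ball of \emph{fixed} radius around a test point share the fate of its center after arbitrarily many iterations, which is what gives the margin $d_{\mathbb{R}^k\setminus B}(p_n)\ge 1/2$ even when the first witness $g(n,t)=1$ appears at an arbitrarily late stage $t$. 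Your proposal has no mechanism that damps error propagation along such orbits: the ``tame background'' $f_0$ (which must simultaneously approximate a translation along the site sequence and a contraction at the sink, globally analytically and computably) is not constructed, and nothing forces the perturbed map to be non-expanding along the orbit, so the image of $B(p_n,1/2)$ after $t$ steps is uncontrolled. Worse, the ``knock'' as described conflicts with the margin: a Gaussian-type bump centered at a far cell $c_{n,t}$ must carry an effective amplitude of order $\Vert c_{n,t}\Vert$ to displace the orbit back to the sink, but across a ball of radius $1/2$ the bump varies by a fixed multiplicative factor, so the displacement varies by $\Theta(\Vert c_{n,t}\Vert)$ over that ball and smears it across a huge region instead of depositing it inside the immediate basin. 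This is precisely the ``error grows with the magnitude of the argument'' obstruction the paper has to defeat (it is why $\xi_3$ and the dynamic precision $40(\overline{y_1}+l)$ appear in Section \ref{Proof:Simulation}); your sketch names the tension but does not resolve it. Two smaller slips: with nonzero series tails at the origin, the origin is no longer exactly fixed (fixable, since the perturbed fixed point of a computable contraction is computable), and when $n\notin H$ an orbit that visits infinitely many well-separated cells cannot ``stay bounded.''

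The continuous-time half is also asserted rather than proved. Realising $f$ as a return map of an analytic, computable vector field with uniformly contracting transverse directions, so that the halting point becomes a genuine hyperbolic sink rather than a point the flow merely hovers near, is the hard part of the paper: the authors argue that such an embedding is not to be expected for general analytic $f$ and succeeds here only because $f$ satisfies the contraction property \eqref{Eq:GoodProperty}; even then it requires the auxiliary variables $u$ and $\tau$, the modified clocking functions, and the Jacobian analysis of Sections \ref{Sec:Hyper}--\ref{Sec:Hyperbolic}. Since your $f$ has not been shown to possess any such contraction property, the suspension step in your proposal is unsupported as it stands.
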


We'll prove two versions of this result: one for discrete-time
systems and the other for continuous-time systems (Theorems
\ref{Th:Main_discrete} and \ref{Th:Main_continuous} of Section
\ref{Sec:Mainresults}).

Notice that computability does not need to be restricted to
$\mathbb{N}$ or $\mathbb{R}$. Given a topological space $X$ and a
suitable coding of elements of $X$ in the form of oracles (which
depends on the topology of $X$), one can define computable elements
of $X$ similarly to computable real numbers. Using the same idea,
one can futher define computable functions $f:X\rightarrow Y$. An
important result is the following (cf.\ \cite[Corollary
4.19]{BHW08}).

\begin{proposition}
\label{Prop:Computable}If $x\in X$ and $f:X\rightarrow Y$ are
computable, then $f(x)$ is computable.
\end{proposition}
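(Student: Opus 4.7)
The plan is to prove the statement by composition of Turing machines: the algorithm that computes a name for $x$ can be invoked as a subroutine to answer the oracle queries of the algorithm that computes $f$. In effect, I would internalize the oracle tape of the $f$-machine by running the $x$-machine on demand, producing a single machine (with no external oracle) that outputs a name for $f(x)$.

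More precisely, since $x$ is computable there is a Turing machine $M_x$ such that, on any input $n$, $M_x$ halts and outputs $\phi_x(n)$, where $\phi_x$ is a valid oracle (name) for $x$ under the fixed representation of $X$. Since $f:X\to Y$ is computable, there is an oracle Turing machine $M_f$ with the property that, for every name $\phi$ of any point $z\in X$, the machine $M_f^{\phi}$ computes a name for $f(z)$. I would construct a new machine $M$ (with no external oracle) that, on input $m$, simulates $M_f$ on input $m$ step by step. Whenever the simulated $M_f$ issues an oracle query ``$\phi(n)?$'', $M$ pauses the simulation, runs $M_x$ on input $n$ until it halts with value $q_n=\phi_x(n)$, writes $q_n$ onto the oracle-answer tape, and then resumes the simulation. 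Once $M_f$ halts and outputs some $y_m$, $M$ outputs $y_m$ as well.

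By construction, the output of $M$ on input $m$ equals the output of $M_f^{\phi_x}(m)$. Since $\phi_x$ is a valid name for $x$ and $M_f$ is guaranteed to compute a name for $f(z)$ whenever fed a valid name for $z$, the function $m\mapsto M^{}(m)$ is a name for $f(x)$ in the representation of $Y$. Hence $f(x)$ is computable.

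The only genuine subtlety is that the oracle simulation must be well-defined: the $M_f$-computation must make only finitely many queries before producing each output symbol, and $M_x$ must halt on each of those queries. The first holds because $M_f$ halts on every valid name (so in particular on $\phi_x$) after finitely many steps, hence uses only finitely many oracle values; the second holds because $M_x$ is by hypothesis a total Turing machine computing $\phi_x$. In the general topological setting the argument is identical, provided the chosen representations of $X$ and $Y$ support this substitution operation at the machine level, which is exactly the standard admissibility condition used in \cite{BHW08}.
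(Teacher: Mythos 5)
Your argument is correct. The paper itself gives no proof of Proposition \ref{Prop:Computable}; it simply cites \cite[Corollary 4.19]{BHW08}, and your oracle-substitution construction (run the machine $M_x$ on demand to answer each oracle query of $M_f$, yielding an oracle-free machine computing a name of $f(x)$) is exactly the standard argument behind that reference. One minor remark: the closing appeal to admissibility is not needed for this statement --- since computability of $f$ is \emph{defined} via an oracle machine that transforms arbitrary valid names of points of $X$ into names of their images, the substitution works for whatever representations of $X$ and $Y$ have been fixed, with no extra hypothesis on them; admissibility only becomes relevant when one wants to relate such computability to topological notions like continuity.
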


In particular, if $x$ is computable, but $f(x)$ is not, then $f$
cannot be computable. Thus, when $X$ is taken to be a set of real
functions, 
Theorem \ref{Th:Mainresultprecise} implies the following corollary.

\begin{cor}
\label{Cor:Main}There is no algorithm which on input $(f,s)$, where $f$ is a
real analytic function and $s$ is a hyperbolic sink of the dynamical system
defined by $f$, computes the basin of attraction of $s$.
\end{cor}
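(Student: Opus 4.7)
The plan is to derive the corollary from Theorem~\ref{Th:Mainresultprecise} by a straightforward contradiction argument based on Proposition~\ref{Prop:Computable}. Suppose, for contradiction, that such an algorithm $\mathcal{A}$ exists. I would first argue that $\mathcal{A}$ induces a computable map $B:X\to Y$ in the sense of computable analysis, where $X$ is the space of admissible pairs $(f,s)$ (an analytic function together with a hyperbolic sink for it) equipped with a natural representation, and $Y$ is the space of open subsets of $\mathbb{R}^n$ equipped with the representation used in the paper (via the distance function $d_{\mathbb{R}^n\setminus O}$).

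Next, I would invoke Theorem~\ref{Th:Mainresultprecise} to obtain a specific computable pair $(f_0,s_0)\in X$, i.e.\ $f_0$ is a computable analytic function and $s_0$ is a computable hyperbolic sink, such that the basin of attraction $\Omega_0$ of $s_0$ is not a computable open set. Because $f_0$ and $s_0$ are each computable, the pair $(f_0,s_0)$ is a computable element of $X$ (the product representation is obtained by interleaving the two oracles). Applying Proposition~\ref{Prop:Computable} to the computable element $(f_0,s_0)\in X$ and the supposedly computable function $B:X\to Y$, one concludes that $B(f_0,s_0)=\Omega_0$ is a computable element of $Y$, i.e.\ $\Omega_0$ is a computable open set. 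This contradicts Theorem~\ref{Th:Mainresultprecise}, proving that no such algorithm can exist.

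The main point requiring care, and therefore the only real obstacle, is the first step: verifying that the alleged algorithm $\mathcal{A}$ indeed gives rise to a computable function $B:X\to Y$ in the precise sense needed to apply Proposition~\ref{Prop:Computable}. This is a matter of fixing the right representations: a computable analytic function is presented, for instance, by a Turing machine producing rational approximations to its Taylor coefficients together with bounds ensuring analyticity, and a hyperbolic sink is presented by an oracle for its coordinates together with a witness of hyperbolicity; the output representation for open sets is already specified in the paper via $d_{\mathbb{R}^n\setminus O}$. Once these representations are in place, the step from ``algorithm on inputs'' to ``computable function between represented spaces'' is routine, and the rest of the argument is a one-line application of Proposition~\ref{Prop:Computable}.
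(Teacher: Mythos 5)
Your proposal is correct and follows essentially the same route as the paper: the authors also derive Corollary \ref{Cor:Main} directly from Theorem \ref{Th:Mainresultprecise} via (the contrapositive of) Proposition \ref{Prop:Computable}, applied to the map sending a computable pair $(f,s)$ to the basin of attraction, with the representation of open sets via the distance function $d_{\mathbb{R}^n\setminus O}$. Your additional remarks on fixing representations for the input space just make explicit what the paper leaves implicit.
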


In fact, Theorem \ref{Th:Mainresultprecise} is stronger than Corollary \ref{Cor:Main}.
Corollary \ref{Cor:Main} ensures that no \textit{single} algorithm can be
used to compute the basin of attraction for \textit{any} input $(f,s)$. This
is known as \textit{uniform computability}. A weaker version is
\textit{non-uniform computability}. In the latter case, we require again an
input $(f,s)$, with the difference being that different algorithms can be used
for different inputs. But Proposition \ref{Prop:Computable} also holds for
non-uniform computability and thus a non-uniform version of Corollary
\ref{Cor:Main} also holds.

It should be noted that computational problems about the long-term
behavior of dynamical systems have been discussed in \cite{BCSS98}.
Their notion of computability is, however, quite different from the
one used here. They allow the use of infinite precision calculations
but require that exact sets are generated. Under this model simple
sets like the epigraph of the exponential
$E=\{(x,y)\in\mathbb{R}^{2}|y\geq e^{x}\}$ are not computable
\cite{Bra00}. Thus in this model results do not correspond to
computing practice \cite{BC06}.

\section{Main results}\label{Sec:Mainresults}


The main theorem, Theorem \ref{Th:Mainresultprecise}, has the
following two versions:

\begin{thm}
[discrete-time case]\label{Th:Main_discrete}There is an analytic and
computable function $f:\mathbb{R}^{3}\rightarrow\mathbb{R}^{3}$ which defines
a discrete-time dynamical system with the following properties:

\begin{enumerate}
\item It has a computable hyperbolic sink $s$;

\item The basin of attraction of $s$ is not computable.
\end{enumerate}
\end{thm}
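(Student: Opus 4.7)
The plan is to reduce an undecidable problem to the question of membership in the basin of attraction of $s$. I start by fixing a total computable function $\psi:\mathbb{N}\to\mathbb{N}$ whose range $A=\psi(\mathbb{N})$ is computably enumerable but non-computable (e.g.\ the set of halting instances of the universal Turing machine). For each $n\in\mathbb{N}$, define the ``halting time'' $t(n)=\min\{k:\psi(k)=n\}$, which is a partial computable function with $t(n)<\infty \iff n\in A$. The objective is then to produce, for each $n\in\mathbb{N}$, a computable ``test point'' $p_{n}\in\mathbb{R}^{3}$ such that $p_{n}$ lies in the basin of $s$ iff $n\notin A$; computability of the basin would then decide $A$, a contradiction.

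First I would construct an analytic ``encoding function'' $h:\mathbb{R}\to\mathbb{R}$ of the form $h(x)=\sum_{k=0}^{\infty} c_{k}\,\eta\bigl(x-\psi(k)\bigr)$, where $\eta$ is a fixed entire ``peak'' function (e.g.\ $\eta(y)=1/(1+y^{2})$, or a similar analytic function with rapid decay away from $0$) and the coefficients $c_{k}>0$ decrease fast enough (tower-fast, if needed) that the series converges uniformly on every bounded strip of $\mathbb{C}$. By the Weierstrass theorem the resulting $h$ is real analytic, and the tail estimates can be made effective so that $h$ is computable. The key features are: for $n\notin A$, $h(n)$ is small (controlled by the tails only), whereas for $n\in A$, $h(n)$ carries a sizable contribution from the term $k=t(n)$, though this contribution becomes evident only once the enumeration has reached index $t(n)$.

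Next I would build $f:\mathbb{R}^{3}\to\mathbb{R}^{3}$ with $(0,0,0)$ a hyperbolic sink, using the third coordinate as a ``trigger.'' Schematically, take
\[
f(x_{1},x_{2},x_{3}) = \bigl(\tfrac{1}{2}x_{1},\; \tfrac{1}{2}x_{2} + \varphi(x_{1})h(x_{1})x_{3},\; \tfrac{1}{2}x_{3} + G(x_{1},x_{2},x_{3})\bigr),
\]
where $\varphi$ is a suitable analytic function and $G$ is designed so that: (i) the linearization at the origin is a strict contraction (hence $(0,0,0)$ is a hyperbolic sink) and (ii) starting at $p_{n}=(n,0,z_{0})$ with a carefully chosen $z_{0}$, the orbit contracts to the origin when $h(n)$ stays small for all iterates, whereas whenever $h(n)$ eventually ``fires'' (because $n$ is enumerated in $A$ at some step $t(n)$) the orbit is kicked into a region where subsequent iterations drive it away from the basin, e.g.\ to $\infty$. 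Analyticity of $f$ reduces to analyticity of $h$, which was secured in the previous step.

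The main obstacle is precisely this tension: the $C^{\infty}$ proofs glue different dynamics on disjoint open sets (impossible for analytic $f$), so the encoding of $A$ must emerge from one globally analytic formula whose long-term iterative behaviour at integer initial conditions reveals enumeration data about $A$. Everything else is routine once $h$ and $f$ are in hand. Computability of $f$ and of $s=(0,0,0)$, together with computability of each test point $p_{n}$, follows from the effective construction of $h$; hyperbolicity of $s$ follows from the explicit linearization; and the dichotomy ``$p_{n}\in\mathrm{Basin}(s)\iff n\notin A$'' yields non-computability of the basin via Proposition~\ref{Prop:Computable}, because a computable basin would make its characteristic behaviour on the computable sequence $\{p_{n}\}$ algorithmically accessible, contradicting non-computability of $A$.
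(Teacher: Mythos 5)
Your overall strategy (reduce a c.e.\ non-computable set to membership of computable test points in the basin) matches the paper's in spirit, but the core of your construction is missing and, as written, does not work. The first problem is the claimed dichotomy for $h$: with coefficients $c_{k}$ decreasing rapidly (which you need for analyticity and computability of the series), the ``sizable contribution'' at $k=t(n)$ is $c_{t(n)}\eta(0)$, which is arbitrarily small when the witness $t(n)$ is large; moreover with $\eta(y)=1/(1+y^{2})$ one has $h(n)>0$ for every $n$, so there is no threshold, uniform or $n$-dependent, separating $n\in A$ from $n\notin A$ by the size of $h(n)$. (One could repair this particular point by choosing $\eta$ entire, nonnegative, with $\eta(0)=1$ and $\eta(m)=0$ for nonzero integers $m$, e.g.\ $\eta(y)=(\sin\pi y/(\pi y))^{2}$, so that $h(n)=0$ iff $n\notin A$; but you did not do this, and it only shifts the difficulty to the dynamics.) The second and more serious problem is that your dynamical mechanism is never constructed. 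The phrase ``whenever $h(n)$ eventually fires (because $n$ is enumerated in $A$ at some step $t(n)$)'' conflates the static real number $h(n)$ with a process unfolding in time: the iteration of a fixed map $f$ has no access to enumeration stages unless the orbit itself carries out the enumeration, i.e.\ simulates a machine --- which is exactly the machinery the paper builds (Theorem \ref{Th:simulation}) and you omit. Concretely, in your schematic map the first coordinate is halved at each step, so after $k$ iterations the ``trigger'' term is evaluated at $h(n/2^{k})$, not at $h(n)$, and whatever information $h(n)$ encoded is destroyed immediately; if instead you freeze $x_{1}$ to retain the memory of $n$, the linearization at the origin acquires an eigenvalue $1$ and $s$ is no longer a hyperbolic sink attracting a neighborhood. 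You give no construction of $G$ and $\varphi$ achieving simultaneously (a) a hyperbolic sink at the origin, (b) convergence of the orbit of $p_{n}$ exactly when $n\notin A$, including escape for arbitrarily tiny positive kicks, and (c) global analyticity, and this is precisely where all the difficulty lies: the paper spends Sections 4.2--4.4 building special contracting functions ($\sigma$, $\xi_{2}$, $\xi_{3}$), interpolation of the transition function, and a non-expansiveness property \eqref{Eq:Th_main} around configurations so that the halting configuration becomes a hyperbolic sink (Lemma \ref{Lemma:hyperbolic}) and whole $1/4$-balls around initial configurations land inside or outside the basin, which is what makes the final reduction to the halting problem go through.

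A minor remark on your last step: your justification that a computable basin makes the behaviour on $\{p_{n}\}$ ``algorithmically accessible'' is vague, but it is repairable even without the paper's quantitative $1/4$-gap, since computability of $d_{\mathbb{R}^{3}\setminus W}$ makes the condition $d_{\mathbb{R}^{3}\setminus W}(p_{n})>0$ semi-decidable, so $\{n:n\notin A\}$ would be c.e.\ and hence $A$ computable. The genuine gap is not there; it is that the analytic map realizing the required long-term dichotomy is asserted rather than constructed, and the specific formula you propose fails for the reasons above.
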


\begin{thm}
[continuous-time case]\label{Th:Main_continuous}There is an analytic
and computable function $g:\mathbb{R}^{6}\times (-1,+\infty)\times
(-1,+\infty)\rightarrow\mathbb{R}^{8}$ which defines a
continuous-time dynamical system via an ODE $y^{\prime}=g(y)$ with
the following properties:

\begin{enumerate}
\item It has a computable hyperbolic sink $s$;

\item The basin of attraction of $s$ is not computable.
\end{enumerate}
\end{thm}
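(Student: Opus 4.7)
The plan is to reduce Theorem \ref{Th:Main_continuous} to Theorem \ref{Th:Main_discrete} by constructing a continuous-time analytic system that ``simulates'' the iteration of the discrete-time map $f:\mathbb{R}^{3}\rightarrow\mathbb{R}^{3}$ given by Theorem \ref{Th:Main_discrete}. As already pointed out in the introduction, a direct use of the standard suspension of Smale and Arnold--Avez turns a fixed point of $f$ into a periodic orbit rather than a fixed point of the resulting flow, so a new ``suspension with contraction'' is needed. The goal is to produce an analytic vector field $g$ on $\mathbb{R}^{6}\times(-1,+\infty)^{2}$ whose flow, sampled at integer times $t=k$, advances a distinguished three-dimensional ``state'' coordinate from $x_{0}$ to $f^{[k]}(x_{0})$, while a pair of auxiliary ``clock'' variables $(u,v)\in(-1,+\infty)^{2}$ drive the remaining coordinates to a single equilibrium whenever the discrete orbit converges to $s$.

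Concretely, I would split the phase space as $y=(x,z,u,v)\in\mathbb{R}^{3}\times\mathbb{R}^{3}\times(-1,+\infty)\times(-1,+\infty)$ and design $g$ so that: (i) on each unit time interval, the ``work'' coordinate $z$ slides analytically from $x$ to $f(x)$, driven by the pair $(u,v)$ which plays the role of a non-periodic clock, and then $x$ is analytically overwritten by $z$ while $z$ is reset, all without any piecewise gluing; (ii) the clocks $(u,v)$ satisfy an ODE that is analytic exactly on $(-1,+\infty)^{2}$ (the singular boundary at $-1$ presumably enters through denominators of the form $1+u$, $1+v$) and that has a hyperbolic sink at, say, $(u,v)=(0,0)$, so that the clocks do not merely revolve but genuinely damp to a point; (iii) the combined system has a distinguished equilibrium $\tilde s=(s,s,0,0)$ whose Jacobian has all eigenvalues with strictly negative real part, making $\tilde s$ a computable hyperbolic sink of the continuous system.

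For the transfer of non-computability, the construction should come equipped with a computable section $\sigma:\mathbb{R}^{3}\rightarrow\mathbb{R}^{6}\times(-1,+\infty)^{2}$, for example $\sigma(x_{0})=(x_{0},x_{0},0,0)$, such that $x_{0}$ lies in the discrete basin of $s$ if and only if $\sigma(x_{0})$ lies in the continuous basin of $\tilde s$. Proposition \ref{Prop:Computable} then yields that computability of the basin of $\tilde s$ together with the computability of $\sigma$ would imply computability of the basin of $s$, contradicting Theorem \ref{Th:Main_discrete}.

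The main obstacle is reconciling three conflicting requirements: the vector field $g$ must be globally analytic (ruling out cut-offs and bump functions of the kind available in the $C^{\infty}$ setting); the flow must approximate one step of $f$ on each unit interval with enough accuracy that convergence of the discrete orbit to $s$ is equivalent to convergence of the continuous orbit to $\tilde s$, uniformly over initial conditions; and the equilibrium $\tilde s$ must be genuinely hyperbolic, not merely asymptotically stable, with uniform contraction rate. The classical suspension on $M\times S^{1}$ fails the third point because of the topology of $S^{1}$, which is precisely why two auxiliary clock coordinates on the contractible but open set $(-1,+\infty)^{2}$ are used instead. The delicate part will therefore be to write down an explicit analytic $g$ realising the ``reset'' of $z$ to $x$ without any gluing while keeping a single hyperbolic sink; once such $g$ is in place, the verification of hyperbolicity via linearisation and the coding argument above should be routine though lengthy.
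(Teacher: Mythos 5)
Your high-level plan (two $3$-dimensional blocks alternately updated so that the flow tracks the iterates of the map $f$ of Theorem \ref{Th:Main_discrete}, plus auxiliary variables making the system autonomous and analytic with equilibrium $\tilde s=(s,s,0,0)$, followed by a halting-problem reduction) is indeed the paper's strategy in outline, but the proposal has a genuine gap exactly at the point where the real difficulty sits: the clock. You posit an auxiliary clock subsystem on $(-1,+\infty)^{2}$ that unconditionally ``damps to a point'' with a hyperbolic sink at $(0,0)$. Such a clock cannot work: for a non-halting input the simulation must execute infinitely many iteration steps, each of which moves the state coordinates by a macroscopic amount (distinct integer configurations), so the phase/reset mechanism must keep switching forever; if the clock trajectory converges to a single point of the phase space, these infinitely many switchings would have to accumulate at that point, which is incompatible with an analytic right-hand side (and if instead the driving terms simply decay with the clock, the simulation stalls and the basin structure is destroyed). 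The paper's resolution — and the technical heart of Sections \ref{Sec:Hyper}--\ref{Sec:Simulate_TM} — is to couple the clock variable $\tau$ to the machine-state component $v_{3}$ through the analytic quasi-switch $\xi_{2}$ (equation (\ref{Eq:u_2_derivative})), so that $\tau\to+\infty$ and the clock keeps ticking as long as $M$ has not halted, and only after halting does $\tau$'s dynamics flip to decay to $0$. This in turn forces two further repairs you do not anticipate: the cycles speed up (hence the factor $\chi(\tau)=15e^{2\tau}$ and the rescaled precisions $\epsilon_i/\chi(\tau)$), and the cubic targeting terms must be augmented by linear terms, since $z'=-z^{3}$ alone gives a non-hyperbolic equilibrium, so ``routine linearisation'' would fail for the natural construction.

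Two further points in your reduction also need fixing. First, taking $\sigma(x_{0})=(x_{0},x_{0},0,0)$ starts the clock at its equilibrium value, so the simulation never begins; the paper uses initial clock values $(u,\tau)=(1,4)$, i.e.\ $z_{0}=(x_{0},x_{0},1,4)$. Second, ``computability of the continuous basin plus computability of $\sigma$ implies computability of the discrete basin'' is not a valid inference: the preimage of a computable open set under a computable map need not be computable (membership of a point in a computable open set is only semi-decidable). What is actually needed, and what the paper proves as Lemma \ref{Lema:aux}, is a quantitative separation: a whole ball $B(z_{0},1/16)$ lies inside the continuous basin if $M$ halts on $x_{0}$ and inside its complement otherwise, so that evaluating the distance function $d_{\mathfrak{R}\setminus W_{final}}(z_{0})$ to precision $1/40$ decides halting. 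Without this uniform gap the contradiction with the Halting problem does not go through.
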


We will prove the two theorems constructively in the sense that the
functions $f$ and $g$ are explicitly constructed.


\subsection{Road map to results\label{Sec:Results roadmap}}

The proofs will at some point become quite involved. For this reason
we provide a brief guide which may help the reader to keep track of
results.

In Section \ref{Sec:DiscreteTime} we prove Theorem \ref{Th:Main_discrete}. The
idea underlying the proof is to encode a non-computable problem into the basin
of attraction of $s$. Thus, if one can compute the basin of attraction of
$s$, then one can algorithmically solve a non-computable problem, a contradiction.

The problem which is encoded is the famous Halting problem. We encode an input
$x_{0}$ of the Turing machine into a triple in $\mathbb{N}^{3}$, the Turing
machine itself into a map $f:\mathbb{R}^{3}\rightarrow\mathbb{R}^{3}$, and the
halting configuration into a hyperbolic sink $s\in\mathbb{N}^{3}$ (assuming
w.l.o.g.\ that the halting configuration is unique). Thus the problem of determining whether a
trajectory starting at $x_{0}$ will converge to $s$ is equivalent to the
Halting problem and is therefore not computable.

Although conceptually simple, this idea is not so easy to implement
in practice. There are several issues to be dealt with. To begin
with, the value of $f(x)$ depends upon only a finite amount of
information carried by $x$. Thus one has to extract this information
first (using the tools of Section \ref{Sec:Preliminaries}; the exact
procedure is explained in the beginning of Section
\ref{Sec:3.NextAction}) and then compute $f(x)$ based on this finite
amount of information. Here techniques of interpolation are employed
(Section \ref{Sec:3.NextAction}).

Moreover, for technical reasons, $f$ needs to be a contraction near integers,
that is,
\begin{equation}
f(B(\alpha,\varepsilon))\subseteq B(f(\alpha),\varepsilon) \label{Eq:GoodProperty}%
\end{equation}
for some fixed $\varepsilon>0$, where $\alpha\in\mathbb{N}^3$ and
$B(\alpha,\varepsilon)=\{x\in\mathbb{R}^{3}|\left\Vert x-\alpha\right\Vert <\varepsilon\}$.
Thus we need to further study how error propagates through $f$ and
fine-tune $f$ whenever necessary (using functions defined in Section
\ref{Sec:Preliminaries}) so that $f$ satisfies
(\ref{Eq:GoodProperty}).
The details are given in Sections \ref{Sec:3.NextAction} and
\ref{Proof:Simulation}.

The continuous-time case is proved differently. The key idea
underlying the proof is to embed the previous discrete-time
dynamical system into a continuous-time system such that $s$ is
still a hyperbolic sink and its basin of attraction remains unchanged
through the embedding. Thus the basin of attraction of $s$ remains
non-computable in the continuous-time system. The challenge in this
approach is to obtain such an embedding (see Sections
\ref{Sec:Branickyinfty}, \ref{Sec:Branickyanalytic} and
\ref{Sec:Hyper} for the construction of an appropriate embedding).
Assuming the existence of such an embedding, Theorem
\ref{Th:Main_continuous} is proved in Section \ref{Sec:final}.
The remainder of this road map deals with Sections
\ref{Sec:Branickyinfty}, \ref{Sec:Branickyanalytic},
\ref{Sec:Hyper}, \ref{Sec:Simulate_TM}, and \ref{Sec:Hyperbolic}. The reader may prefer to
skip these sections in a first reading.

There is a standard technique for embedding discrete-time systems
into continuous-time systems, called \emph{suspension} \cite{AA68},
\cite{Sma67}. This procedure relies on equivalence relations and is
ill suited for our purposes. Indeed, if we pick a discrete-time
dynamical system defined by an analytic and computable function
$f:\mathbb{R}^{n}\rightarrow\mathbb{R}^{n}$ and $f$ admits a
hyperbolic sink $s$, then using the suspension method would yield a
continuous-time system $x^{\prime}=g(x)$ with one or more or all of
the following undesirable features:\smallskip

(i) $g$ may be non-analytic (discontinuities in $g^{(k)}$, the $k$th
order derivative of $g$, may occur);

(ii) $g$ may be non-computable (the procedure is not constructive);

(iii) $s$ may be part of an attractor, but this attractor may not be a sink
(it may be, for example, a cycle);

(iv) even if $s$ is a fixed point, there is no guarantee that it is hyperbolic
(in the continuous-time system).\smallskip

In this paper, we develop a technique for the embedding that is
different from the suspension method.
Our idea is based on the techniques developed in \cite{Bra95},
\cite{CMC00}, \cite{CM01}, \cite{Cam02b}. We observe that if a
discrete-time system defined by
$f:\mathbb{R}^{n}\rightarrow\mathbb{R}^{n}$ is embedded into a
continuous-time system $y^{\prime}=g(y)$, then the trajectory
$y(t)$, starting at $x_{0}$, must arrive at the states $f(x_{0})$,
$f(f(x_{0}))$, and
so on at times $t=1,2,\ldots$; that is, $y(0)=x_{0},y(1%
)=f(x_{0}),y(2)=f(f(x_{0})),\ldots$. This requirement of
$y(k)=f^{[k]}(x_{0})$ for all $k\in\mathbb{N}$ is what makes the
embedding difficult - there are infinitely many conditions,
$y(k)=f^{[k]}(x_0)$, to be met.
The idea used for tackling the problem is to enlist a second
companion system and use, alternatively, one system to guide the
trajectory of the other moving forward, satisfying one requirement
at a time. Let us look into the details a bit more. To embed the
discrete-time dynamical system defined by
$f:\mathbb{R}^{n}\rightarrow\mathbb{R}^{n}$ into a continuous-time
system, we design a continuous-time system $y^{\prime}=g(y)$ that is
defined on $\mathbb{R}^{2n}$. The system has two components $y_{A}$
and $y_{B}$, both of dimension $n$. The components $y_{A}$ and
$y_{B}$\ both start at value $x_{0}$ for $t=0$. In the time interval
$[0,1/2]$, $y_{B}$ is unchanged (serving as a memory) with the
constant value $x_{0}$ and the values of $y_{A}$ are updated so that
$y_{A}=f(y_{B})=f(x_{0})$ at $t=1/2$. In the next half-unit time
interval $[1/2,1]$, $y_{A}$ serves as a memory taking the constant
value $f(x_{0})$, while the values of $y_{B}$ are updated so that
$y_{B}=y_{A}$ at $t=1$, i.e.\ $y_{B}=f(x_{0})$. Thus at $t=1$, one
has $y_{A}(1)=y_{B}(1)=f(x_{0})$. The procedure is then repeated in
the subsequent time intervals $[1,2],[2,3],\ldots$, which leads to
the following desired property: %
\[
y_{A}(2)=y_{B}(2)=f(f(x_{0})),\text{ \ \ }y_{A}(3)=y_{B}(3)=f(f(f(x_{0}%
))),\text{ \ldots}.%
\]
The procedure is graphically depicted in Fig.\ \ref{fig:branicky}.%
\begin{figure}[ptb]
\begin{center}
\includegraphics[width=0.6\textwidth]{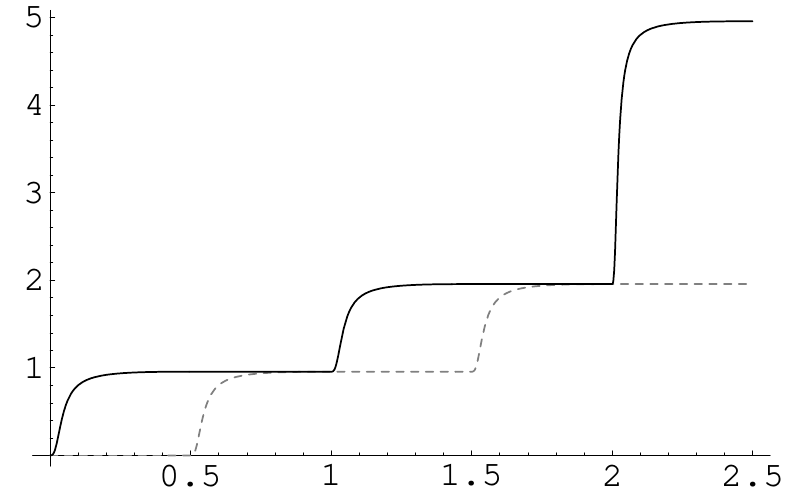}
\end{center}
\caption
{Simulation of the iteration of the map $f(n)=n^2+1$ via an ordinary differential equation, where $n$ is initially $0$.
The solid line
represents the variable $y_A$ and the dashed line represents $y_B$.}
\label{fig:branicky}
\end{figure}%

The subtlety is to always have at each moment a component which acts as a
\textquotedblleft memory\textquotedblright\ of the last value computed (see
Section \ref{Sec:Branickyinfty} for details). Using this technique, one can
embed the discrete-time dynamical system defined by $f:\mathbb{R}%
^{3}\rightarrow\mathbb{R}^{3}$ into a continuous-time system $y^{\prime}%
=g(y)$, where $g:\mathbb{R}^{6}\rightarrow\mathbb{R}^{6}$. Moreover,
if $f$ is computable, so is $g$. Thus the problem (ii) above is
solved satisfactorily. However, the remaining problems (i), (iii)
and (iv) have yet to be addressed. In particular, because the
components of $g$ vary in some open time intervals but take constant
values in others, $g$ cannot be analytic.

In this paper, we refine the previous embedding so that it becomes
analytic (if we start with an analytic function $f$); the problem
(i) is then subsequently solved. We believe that this cannot be done
in general, because the propagation of error due to a slight
perturbation on $x_{0}$ (thus on $f(x_{0})$ as well) may alter the
behavior of $y$ in the long run, resulting in $y_{A}(k)$ being far
different from $f^{[k]}(x_{0})$ for large $k$'s. Fortunately, in our
case, the function $f$ satisfies the inclusion
(\ref{Eq:GoodProperty}) near all integers, which in turn implies
that a sufficiently good
approximation to $x_{0}$ ($f(x_{0})$, respectively) can be used to
generate a trajectory that is a good enough approximation to the
entire trajectory defined by $f$ and starting at $x_{0}$. Using the
crucial inclusion (\ref{Eq:GoodProperty}), one can allow $y_{A}\
$and $y_{B}$ to be almost constant (but analytic) instead of
requiring them to be constant in some time intervals. Of course,
we'll then have $y_{A}(1)\simeq y_{B}(1)\simeq f(x_{0})$, but since
the propagation of error through the trajectory is well contained
due to (\ref{Eq:GoodProperty}), we will obtain
\begin{equation}
y_{A}(2)\simeq y_{B}(2)\simeq f(f(x_{0})),\text{ \ \ }y_{A}(3)\simeq
y_{B}(3)\simeq f(f(f(x_{0}))),\text{ \ldots} \label{Eq:approx}%
\end{equation}
Moreover the new system $y^{\prime}=g(y)$ will still be computable and
analytic (the construction is presented in Section \ref{Sec:Branickyanalytic}%
). At this point, problems (i) and (ii) are taken care of.

It remains to address problems (iii) and (iv). Notice that if
$x_{0}$ belongs to the basin of attraction of the discrete-time
dynamical system, then the previous embedding only ensures that the
(continuous-time) trajectory starting at $x_{0}$ will wander around
$s$ due to (\ref{Eq:approx}). It does not guarantee though that $s$
is a sink nor does it guarantee hyperbolicity. Hyperbolicity of $s$
(assuming that $s$ is indeed a sink) can be shown by demonstrating
that the Jacobian of the system at $s$ admits only eigenvalues with
negative real part (Section \ref{Sec:Hyperbolic}), thus solving
problem (iv). The last problem, problem (iii), is solved by changing
the dynamics of $y^{\prime}=g(y)$ so that all points in $B(s,1/4)$
will be converging to $s$ and, during the changing process, problems
(i), (ii), and (iv) are kept under control and do not resurface.
This guarantees that $s$ is a hyperbolic sink and therefore we have
our desired embedding. The latter result is proved in Section
\ref{Sec:Hyper}. The proof uses various techniques and spans several
pages. The reader may prefer to skip it on a first reading. Finally, Section \ref{Sec:Simulate_TM}
shows that the system which satisfies (i)-(iv) still simulates a Turing machine, which guarantees
non-computability of the basin of attraction for the hyperbolic sink.

\section{The discrete-time case}\label{Sec:DiscreteTime}

Recall that Turing machines emulate computer programs. One can always stop the
execution of a program and record the value of the variables and the line
where the program halted. This recorded information is called a
\emph{configuration} in the context of TMs and will be used below. It has all
the information we need to continue the computation, if we wish.

Each time we start a new computation, it starts on some special class of
configurations -- the \emph{initial configurations}. The computation goes on
until one eventually reaches a \emph{halting configuration} where the TM stops
its execution (it \textit{halts} -- which may never happen, i.e.\ the TM may
run forever). Let $M$ be a universal Turing machine (the precise definition of
a universal Turing machine is not important here, but can be found in
\cite{Sip05}). The Halting problem can be stated as follows: \textquotedblleft
Given some initial configuration $x_{0}$ of $M$, will the computation reach
some halting configuration?\textquotedblright\ This problem is well known to
be non-computable (cf.\ \cite{Sip05}). Without loss of generality, we may
assume that a TM has just one halting configuration (e.g.\ just before ending,
set all variables to 0 and go to some special line with a command
\texttt{break}; thus the final configuration is unique).

Configurations can be encoded as points of $\mathbb{N}^{3}$ (see Section
\ref{Sec:Configurations}). Since essentially what a TM does is to update one
configuration to the next configuration and so on, until the TM halts,
associated to each TM $M$ there is a transition function $f_{M}:\mathbb{N}%
^{3}\rightarrow\mathbb{N}^{3}$ which describes how $M$ behaves. To prove the main theorem
in the discrete-time setting, we need to extend $f_{M}$ to an analytic function $f:\mathbb{R}^3\to\mathbb{R}^3$
which is non-expanding around the inputs of the transition function $f_{M}$,
i.e., $f$ maps points near a configuration $x\in\mathbb{N}^{3}$
to points near $f(x)$.

The simulation of Turing machines with analytic maps was first studied in
\cite{KM99}. A different method was used in \cite{GCB08} to simulate Turing machines with analytic maps which has the advantage of presenting some robustness to perturbations. This robustness property can then be used to simulate of Turing machines in continuous time through the use of an ODE \cite{GCB08}. The results shown there guarantee that points near a
configuration $x$ will be mapped into some bounded vicinity of $f(x)$, which,
of course, does not ensure non-expansiveness. More concretely, it was proven
in \cite{GCB08} that for each Turing machine $M$, there exists an analytic map
$f:\mathbb{R}^{3}\rightarrow\mathbb{R}^{3}$ which simulates it (meaning the
restriction of $f$ to $\mathbb{N}^{3}$ is the transition function of $M$) with
the following property:
\begin{equation}
\left\Vert y-x\right\Vert \leq1/4\text{ \ \ }\Rightarrow\text{
\ \ }\left\Vert f(y)-f(x)\right\Vert \leq1/4\label{Eq:old_map_simulation}
\end{equation}
where $x\in\mathbb{N}^{3}$ codes a configuration. In this paper, the analytic
map $f$ needs to be non-expanding in order to have the halting configuration
as a sink; thus $f$ must satisfy the following stronger property: for every $0\leq\varepsilon\leq1/4$, the following holds true:
\[
\left\Vert y-x\right\Vert \leq\varepsilon\text{ \ \ }\Rightarrow\text{
\ \ }\left\Vert f(y)-f(x)\right\Vert \leq\varepsilon.
\]
To achieve this purpose, we adapt the results from \cite{GCB08}. In the
remainder of this paper we take%
\[
\left\Vert (x_{1},\ldots,x_{k})\right\Vert =\left\Vert (x_{1},\ldots
,x_{k})\right\Vert _{\infty}=\max_{1\leq i\leq k}\left\vert x_{i}\right\vert.
\]
The following result will be proved in Section \ref{Proof:Simulation}.

\begin{thm}
\label{Th:simulation}Let $M$ be a Turing machine, and let $f_{M}:\mathbb{N}%
^{3}\rightarrow\mathbb{N}^{3}$ be the transition function of $M$.
Then $f_{M}$ admits an analytic and computable extension
$f:\mathbb{R}^{3}\rightarrow \mathbb{R}^{3}$ with the following
property: there exists a constant
$\lambda\in (0, 1)$ with the property that for any $0<\varepsilon\leq 1/4$, if $x\in\mathbb{N}^{3}$
is a configuration of $M$, then for any $y\in\mathbb{R}^3$,%
\begin{equation}
\left\Vert x-y\right\Vert \leq\varepsilon\text{ \ \ \
}\Longrightarrow \text{ \ \ \ }\left\Vert f(x)-f(y)\right\Vert
\leq\lambda\cdot\varepsilon.
\label{Eq:Th_main}%
\end{equation}
\end{thm}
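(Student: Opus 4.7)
The plan is to start from the analytic computable extension $\tilde f:\mathbb{R}^3\to\mathbb{R}^3$ furnished by \cite{GCB08}, which already satisfies $f_M=\tilde f|_{\mathbb{N}^3}$ and the weak non-expansion property $(\ref{Eq:old_map_simulation})$, and to upgrade it to the contractive form $(\ref{Eq:Th_main})$ by pre-composing with an analytic ``flattening'' map that is the identity on the integer lattice and whose derivative vanishes there. In this way the Jacobian of the final map $f$ at every configuration is zero, which will force $\|f(y)-f(x)\|$ to be controlled by a higher power of $\|y-x\|$; truncating by the admissible bound $\varepsilon\leq 1/4$ will then produce the desired factor $\lambda<1$.

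Concretely, define the analytic and computable function $\sigma:\mathbb{R}\to\mathbb{R}$ by
\[
\sigma(x)=x-\frac{\sin(2\pi x)}{2\pi},
\]
and let $\Sigma:\mathbb{R}^3\to\mathbb{R}^3$ be its coordinate-wise extension. Set
\[
f=\tilde f\circ\Sigma.
\]
Since $\sigma(n)=n$ and $\sigma'(n)=1-\cos(2\pi n)=0$ for every $n\in\mathbb{Z}$, the map $\Sigma$ fixes $\mathbb{N}^3$ pointwise, so $f$ still extends $f_M$. Analyticity and computability are preserved under composition with $\Sigma$, so the candidate $f$ inherits both properties from $\tilde f$.

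To verify the contraction inequality, fix $x\in\mathbb{N}^3$ and $y\in\mathbb{R}^3$ with $\|y-x\|\leq\varepsilon\leq 1/4$. Using $\sigma'(n+t)=1-\cos(2\pi t)\leq 2\pi^2 t^2$ and integrating,
\[
|\sigma(y_i)-x_i|\leq\int_0^{|y_i-x_i|}2\pi^2 t^2\,dt
=\frac{2\pi^2}{3}|y_i-x_i|^3,
\]
so $\|\Sigma(y)-x\|\leq\frac{2\pi^2}{3}\varepsilon^3\leq\frac{\pi^2}{96}<\frac{1}{4}$. In particular $\Sigma(y)\in B(x,1/4)$, so once one secures a Lipschitz bound $L$ for $\tilde f$ on $B(x,1/4)$ (uniform in $x\in\mathbb{N}^3$),
\[
\|f(y)-f(x)\|=\|\tilde f(\Sigma(y))-\tilde f(x)\|\leq L\|\Sigma(y)-x\|\leq \frac{2\pi^2 L}{3}\varepsilon^3\leq\frac{\pi^2 L}{24}\,\varepsilon,
\]
so $\lambda:=\pi^2 L/24$ works provided $L<24/\pi^2$.

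The genuine difficulty is the last caveat: establishing a \emph{uniform} Lipschitz constant for $\tilde f$ on the balls $B(x,1/4)$ as $x$ ranges over the infinite lattice $\mathbb{N}^3$. Property $(\ref{Eq:old_map_simulation})$ only bounds the image of such a ball, not a derivative. To extract a derivative bound I would revisit the construction of $\tilde f$ from \cite{GCB08}: it is built out of sums of shifted sine kernels together with the values of $f_M$ at the relevant configurations, and admits an analytic extension to a fixed complex polydisc of radius $>1/4$ around each integer point on which $\|\tilde f-f_M(x)\|$ is bounded uniformly in $x$ (because the non-trivial transition $f_M(x)-x$ of a Turing machine is bounded in each coordinate). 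Cauchy's estimate on this complex polydisc then yields the desired uniform bound $L$. Should the $L$ obtained this way fail to satisfy $\pi^2 L/24<1$, one iterates the flattening by replacing $\sigma$ with a higher-order variant (e.g.\ a suitable combination of $\sin(2\pi x)$ and $\sin(4\pi x)$ killing $\sigma''(n)$ as well), which gains an extra power of $\varepsilon$ at no structural cost and drives $\lambda$ strictly below $1$.
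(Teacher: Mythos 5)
Your reduction to a uniform Lipschitz bound is where the argument breaks, and the justification you sketch for that bound is not correct. The map $\tilde f$ of \cite{GCB08} satisfies only \eqref{Eq:old_map_simulation}, and the obstruction to anything stronger is exactly the one this theorem has to overcome: in the tape-update step the interpolating expression (cf.\ \eqref{Eq:y1_next}) contains products of the form $(y_{1}+s_{next}-\omega(y_{1}))\,h_{next}(2-h_{next})$ and $l\cdot(y_{1}+\cdots)$, so a perturbation of size $\delta$ in the head-direction variable is amplified by a factor of order $y_{1}$, and $y_{1}$ ranges over all of $\mathbb{N}$ as the tape grows. Hence the local Lipschitz constant of $\tilde f$ on $B(x,1/4)$ grows without bound along the lattice of configurations, and no single $L$ (let alone $L<24/\pi^{2}$) exists; your cubic flattening then only yields $\Vert f(y)-f(x)\Vert\leq L_{x}\,\tfrac{2\pi^{2}}{3}\varepsilon^{3}$ with $L_{x}\to\infty$, which cannot produce one $\lambda<1$ valid for all configurations, and no fixed higher-order flattening can compensate for an $x$-dependent blow-up. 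The specific premise you invoke for the Cauchy-estimate step is also false: under the encoding \eqref{3:Tape_encoding} a left move of the head sends $y_{1}\mapsto l\,y_{1}+O(1)$, so $f_{M}(x)-x$ is \emph{not} bounded coordinate-wise, and consequently $\tilde f$ is not uniformly bounded (relative to $f_{M}(x)$) on a fixed complex polydisc around each configuration.

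The paper resolves this not by estimating the \cite{GCB08} map after the fact but by rebuilding its problematic step. The tape update is split as $y_{1}^{next}=P_{left}+P_{stay}+P_{right}$, and in each parcel the dangerous factor is evaluated not at $\overline{h_{next}}$ but at $\theta(\overline{h_{next}},\overline{y_{1}})=\xi_{3}(\overline{h_{next}},40(\overline{y_{1}}+l))$, where $\xi_{3}$ (Proposition \ref{Lema:Zeta3}) has a \emph{dynamic} contraction rate: it shrinks the error in the head-direction variable to at most $\varepsilon/(40(\overline{y_{1}}+l))$ \emph{before} it is multiplied by the factor of size $\sim\overline{y_{1}}+l$, so the product stays bounded by a fixed fraction of $\varepsilon$ uniformly in $y_{1}$; the pieces $\omega$, $q_{next}$, $s_{next}$, $h_{next}$ are made non-expanding by composing with $\sigma^{[k]}$. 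This yields a $\tilde f$ satisfying \eqref{Eq:Th_main} with $\lambda=1$, and the final contraction comes from pre-composing with the paper's $\sigma(x)=x-0.2\sin(2\pi x)$, which is a genuine uniform contraction near integers with factor $\lambda_{1/4}=0.4\pi-1$ (your choice $x-\sin(2\pi x)/(2\pi)$ merely has vanishing derivative at integers, which by itself is fine but does nothing to repair the unbounded expansion of $\tilde f$). So the composition idea at the end of your plan matches the paper, but the heart of the theorem is the construction of a non-expanding $\tilde f$, and that part is missing from your proposal.
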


In the previous proposition we have assumed that if $x$ is a halting
configuration, then $f_{M}(x)=x$, i.e.\ $x$ is a fixed point of $f$.

\begin{lem}
\label{Lemma:hyperbolic}Let
$f:\mathbb{R}^{n}\rightarrow\mathbb{R}^{n}$ be a map with a fixed
point $x_{0}$, and let $B(x_{0}, r)$ be a neighborhood of $x_{0}$
with $r>0$. If there is a constant $\lambda\in(0,1)$ such that for
all $x\in B(x_{0}, r)$,%
\[
\left\Vert f(x)-f(x_{0})\right\Vert \leq\lambda\left\Vert x-x_{0}\right\Vert
\]
then $x_{0}$ is a hyperbolic sink of $f$. In particular, no
eigenvalue of $Df(x_{0})$ has absolute value$\ $larger that
$\lambda$.
\end{lem}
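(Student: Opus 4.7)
The plan is to prove the two assertions in the lemma separately. First I would establish that $x_{0}$ is a sink by iterating the given contraction estimate, and then I would derive the eigenvalue bound by reading off an operator-norm bound on $Df(x_{0})$ from the same estimate. Since the hypothesis already encodes a genuine contraction on $B(x_{0},r)$, neither step should require deep machinery; the lemma is essentially a reformulation of Banach's contraction argument together with a standard fact linking operator norms to spectral radii.

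For the sink property, I would observe that since $f(x_{0})=x_{0}$, the hypothesis gives $\|f(x)-x_{0}\|\leq\lambda\|x-x_{0}\|$ for all $x\in B(x_{0},r)$. In particular $f(x)\in B(x_{0},r)$ because $\lambda<1$, so the ball is forward-invariant. Iterating, I obtain $\|f^{[k]}(x)-x_{0}\|\leq\lambda^{k}\|x-x_{0}\|$ for every $k\geq 0$, which tends to $0$ as $k\to\infty$. Hence every point of $B(x_{0},r)$ lies in the basin of attraction of $x_{0}$, so $x_{0}$ is a sink.

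For the eigenvalue bound I would use the differentiability of $f$ at $x_{0}$ (which is implicit in the statement, since $Df(x_{0})$ appears). For any vector $v$ with $\|v\|=1$ and any sufficiently small $t>0$, the point $x_{0}+tv$ lies in $B(x_{0},r)$, so
\[
\Bigl\|\tfrac{f(x_{0}+tv)-f(x_{0})}{t}\Bigr\|\leq \lambda.
\]
Letting $t\to 0^{+}$ yields $\|Df(x_{0})v\|\leq\lambda$, and taking the supremum over unit vectors gives $\|Df(x_{0})\|_{\mathrm{op}}\leq\lambda$ in the ($\ell^{\infty}$) operator norm used throughout the paper. Since the spectral radius of any matrix is bounded above by any operator norm, every eigenvalue $\mu$ of $Df(x_{0})$ satisfies $|\mu|\leq\lambda<1$. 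This simultaneously delivers the quantitative eigenvalue bound stated and the discrete-time hyperbolicity of $x_{0}$ (all eigenvalues strictly inside the unit circle).

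The only mildly delicate point, and the one I would be most careful about, is keeping the norms consistent: the hypothesis is phrased in the $\ell^{\infty}$ norm fixed earlier in the paper, and I want the eigenvalue bound to be norm-independent. This is handled by the elementary fact that the spectral radius is dominated by every submultiplicative operator norm, so the conclusion $|\mu|\leq\lambda$ is unambiguous. No other step should present any real obstacle.
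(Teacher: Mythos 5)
Your proposal is correct and its key step --- bounding $\left\Vert Df(x_{0})v\right\Vert \leq\lambda\left\Vert v\right\Vert$ by applying the hypothesis to the difference quotient $\bigl(f(x_{0}+tv)-f(x_{0})\bigr)/t$, letting the increment tend to $0$, and then dominating every eigenvalue by the resulting operator-norm bound --- is exactly the paper's argument (the paper writes the same limit via the Taylor remainder $f(x)=f(x_{0})+Df(x_{0})(x-x_{0})+\left\Vert x-x_{0}\right\Vert o(x-x_{0})$ with $x-x_{0}=\alpha\vec{v}$, $\alpha\to 0$). The only difference is cosmetic: you additionally verify the sink property directly by iterating the contraction on the forward-invariant ball $B(x_{0},r)$, whereas the paper simply infers it from the fact that all eigenvalues of $Df(x_{0})$ lie strictly inside the unit circle.
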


\begin{proof}
Since $f$ is analytic, we know (see e.g.\ \cite[XVI, \S 2]{Lan87}) that around
the fixed point $x_{0}$,
\begin{equation}
f(x)=f(x_{0})+Df(x_{0})(x-x_{0})+\left\Vert x-x_{0}\right\Vert o(x-x_{0})
\label{Eq:Jacob}%
\end{equation}
where $o(y)\rightarrow0$ as $y\rightarrow0$. Let $\vec{v}\ $be any
vector in $\mathbb{R}^{n}$, and let $\alpha$ be a positive real
number. Then the last equation yields
(taking $x-x_{0}=\alpha\vec{v}$)%
\begin{align}
Df(x_{0})(\alpha\vec{v})  &  =f(x_{0}+\alpha\vec{v})-f(x_{0})-\left\Vert
\alpha\vec{v}\right\Vert o(\alpha\vec{v})\text{ \ \ }\Rightarrow\nonumber\\
\alpha\left\Vert Df(x_{0})\vec{v}\right\Vert  &  \leq\left\Vert f(x_{0}%
)-f(x_{0}+\alpha\vec{v})\right\Vert +\alpha\left\Vert \vec{v}\right\Vert
\left\Vert o(\alpha\vec{v})\right\Vert \text{ \ \ }\Rightarrow\nonumber\\
\left\Vert Df(x_{0})\vec{v}\right\Vert  &  \leq\frac{\lambda\left\Vert
\alpha\vec{v}\right\Vert }{\alpha}+\left\Vert \vec{v}\right\Vert \left\Vert
o(\alpha\vec{v})\right\Vert \text{ \ \ }\Rightarrow\nonumber\\
\left\Vert Df(x_{0})\vec{v}\right\Vert  &  \leq\left(\lambda + \left\Vert
o(\alpha\vec{v})\right\Vert\right)\left\Vert
\vec{v}\right\Vert \nonumber
\end{align}
The last inequality must be true for all positive $\alpha$. Since the left-hand side of the inequality does not depend on $\alpha$, when $\alpha\to 0$ we get
\begin{align}
\left\Vert Df(x_{0})\vec{v}\right\Vert  &  \leq\lambda\left\Vert \vec
{v}\right\Vert \label{Eq;hyperbolic}%
\end{align}
The last inequality implies that no eigenvalue of $Df(x_{0})$ can
have absolute value$\ $larger than $\lambda<1$. In particular, this
implies that the point $x_{0}$ is a hyperbolic sink for the map $f$.
\end{proof}

Using these two results, we can now prove Theorem \ref{Th:Main_discrete}.

\begin{proof}
[of Theorem \ref{Th:Main_discrete}]Let $M$ be a universal
Turing machine. Suppose, without loss of generality, that $M$ admits
only one halting configuration $s\in\mathbb{N}^{3}$. Since
$s\in\mathbb{N}^{3}$, $s$ is a computable real number. Let $f_{M}$
be the transition function of $M$ and let $f$ be the map that
simulates $M$ according to Theorem \ref{Th:simulation}. Then $s$ is
a fixed point of $f$ and, by Theorem \ref{Th:simulation} and Lemma
\ref{Lemma:hyperbolic}, it is a hyperbolic sink. Denote as
$W_{final}$ the basin of attraction of $s$. Let
$x_{0}\in\mathbb{N}^{3}$ be an initial configuration of $M$. We note
that $M$ halts on
$x_{0}$ iff $x_{0}\in W_{final}$. Moreover, from Theorem \ref{Th:simulation}%
,\ any trajectory starting at a point in $B(x_{0},1/4)=\{z\in\mathbb{R}%
^{3}|\left\Vert x_{0}-z\right\Vert <1/4\}$ will also converge to $s$ if $M$
halts on the initial configuration $x_{0}$ and will not converge to $s$ if $M$
does not halt on $x_{0}$. In other words, $B(x_{0},1/4)\subseteq W_{final}$ if
$M$ halts on $x_{0}$ and $B(x_{0}, 1/4)\subseteq\mathbb{R}^{3}-W_{final}$ if
$M$ does not halt on $x_{0}$.

We recall that $W_{final}$ is an open subset of $\mathbb{R}^{3}$. Now suppose
that $W_{final}$ is a computable set. Then the distance function
$d_{\mathbb{R}^{3}\backslash W_{final}}$ is computable. Therefore we can
compute $d_{\mathbb{R}^{3}\backslash W_{final}}(x_{0})$ with a precision of
$1/10$ yielding some rational $q$. We observe that either $d_{\mathbb{R}%
^{3}\backslash W_{final}}(x_{0})=0$ if $x_{0}\notin W_{final}$, or else
$d_{\mathbb{R}^{3}\backslash W_{final}}(x_{0})\geq1/4$ if $x_{0}\in W_{final}%
$. In the first case, $q\leq1/10$, while in the second case, $q\geq
1/4-1/10=3/20$. Now we have an algorithm that solves the halting problem: on
initial configuration $x_{0}$, compute $d_{\mathbb{R}^{3}\backslash W_{final}%
}(x_{0})$ within a precision of $1/10$ yielding some rational $q$. If
$q\leq1/10$, then $M$ does not halt on $x_{0}$; if $q\geq3/20$, then $M$ halts
on $x_{0}$. This is of course a contradiction to the fact that the halting
problem is non-computable. Therefore $W_{final}$ cannot be computable.
\end{proof}


\subsection{Brief overview of the proof}
From the previous section, it is clear that what remains to be done is to prove Theorem \ref{Th:simulation}. Since the result in this theorem is similar to the simulation result of TMs with analytic maps of \cite{GCB08}, it is natural that our proof uses techniques similar to those used in that paper. The main difference is that in each iteration of the map we require that \eqref{Eq:Th_main} holds for Theorem \ref{Th:simulation}, while in \cite{GCB08} only the weaker condition \eqref{Eq:old_map_simulation} is required.

The problem in obtaining \eqref{Eq:Th_main} from \eqref{Eq:old_map_simulation} is that in the construction used in \cite{GCB08}, the error $\left\Vert f(x)-f(y)\right\Vert$ depends not only on the initial error $\left\Vert x-y\right\Vert$ but also \emph{on the magnitude of} $x$. An involved construction was used in \cite{GCB08} to ensure that, despite this dependence, the magnitude of the error $\left\Vert f(x)-f(y)\right\Vert$ would not exceed 1/4 (this value is fixed \textit{a priori}).

Here we have to improve the previous construction so that $\left\Vert f(x)-f(y)\right\Vert$ is not bounded by an error given a \textit{a priori}, but rather by the dynamic quantity $\left\Vert x-y\right\Vert$. This is achieved by introducing a new function $\xi_{3}$ (see Section \ref{Sec:Preliminaries}) which will then be used to modify the problematic step 5 (this step is at the source of the problem mentionned above) in Section \ref{Proof:Simulation}.

\subsection{Some special functions used in the
construction\label{Sec:Preliminaries}}



We need to construct a map $f$ that satisfies Theorem
\ref{Th:simulation}. In particular, $f$ must satisfy condition
(\ref{Eq:Th_main}). This can be achieved by requiring the map to
\textquotedblleft contract\textquotedblright\ around integer values
with the help of several special functions: $\sigma, l_{2},
\xi_{2}$, and $\xi_{3}$. These functions are described below and all
of them share the common property of being contractions either
around all integers or around some particular integers.

The first special function, $\sigma:\mathbb{R\rightarrow R}$,
is defined by (cf.~Fig.~\ref{fig:funcao_sigma})%
\begin{equation}
\sigma(x)=x-0.2\sin(2\pi x) \label{3:sigma}%
\end{equation}
The function $\sigma$ is a uniform contraction in a neighborhood of integers as the following
proposition shows.%
\begin{figure}[ptb]
\begin{center}
\includegraphics[width=0.6\textwidth]{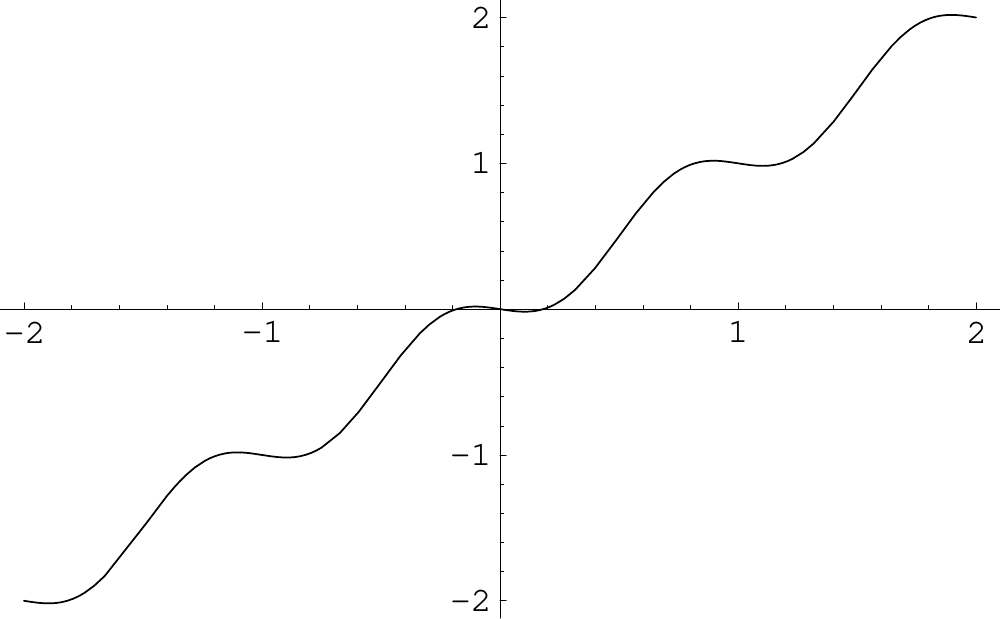}
\end{center}
\caption{Graphical representation of the function $\sigma$.}
\label{fig:funcao_sigma}
\end{figure}%

\begin{proposition}
(\cite{GCB08}) \label{Prop:sigma}Let $\varepsilon\in\lbrack0,1/4).$
Then there is some contracting factor
$\lambda_{1/4}\in(0,1)$ such that for any
$\delta\in\lbrack-1/4,1/4]$, $|\sigma(n+\delta
)-n|<\lambda_{1/4}\delta$ for all $n\in\mathbb{Z}$.
\end{proposition}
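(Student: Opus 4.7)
The plan is to reduce the claim to a one-variable estimate on a compact interval by exploiting the $1$-periodicity of $\sin(2\pi x)$, and then to read off the contraction factor from the behavior of $\sin(2\pi\delta)/\delta$ on $[-1/4,1/4]$.

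First I would observe that $\sigma(n)=n$ for every $n\in\mathbb{Z}$, and that for any $\delta\in[-1/4,1/4]$
\[
\sigma(n+\delta)-n \;=\; (n+\delta) - 0.2\sin\!\bigl(2\pi(n+\delta)\bigr) - n \;=\; \delta - 0.2\sin(2\pi\delta),
\]
because $\sin(2\pi(n+\delta))=\sin(2\pi\delta)$. Thus the bound $|\sigma(n+\delta)-n|\le\lambda_{1/4}|\delta|$ is independent of $n$ and reduces to showing
\[
\bigl|\delta - 0.2\sin(2\pi\delta)\bigr| \;\le\; \lambda_{1/4}\,|\delta|\qquad(\delta\in[-1/4,1/4]).
\]

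Next I would introduce the auxiliary function
\[
h(\delta) \;=\; 1 - 0.2\,\frac{\sin(2\pi\delta)}{\delta}\qquad(\delta\ne 0),
\]
extended at $\delta=0$ by $h(0)=1-0.4\pi$ (the limit from L'Hôpital, equal to $\sigma'(0)$). Since $h$ is even (the sine is odd), it suffices to bound $|h|$ on $[0,1/4]$. Because $\sin(2\pi\delta)/\delta$ extends continuously to $[0,1/4]$ with value $2\pi$ at the origin, $h$ is continuous on the compact interval, and so $\lambda_{1/4}:=\max_{\delta\in[-1/4,1/4]}|h(\delta)|$ exists.

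The main (and only nontrivial) step is to check $\lambda_{1/4}<1$. For this I would invoke the classical fact that $\sin(x)/x$ is strictly decreasing on $(0,\pi]$, so that $0.2\sin(2\pi\delta)/\delta$ is strictly decreasing in $\delta$ on $(0,1/4]$, taking values in $[0.8,\,0.4\pi)$. Consequently $h$ is strictly increasing on $[0,1/4]$ with
\[
h(0) \;=\; 1-0.4\pi \;\approx\; -0.2566,\qquad h(1/4) \;=\; 1-0.8 \;=\; 0.2,
\]
so $|h(\delta)|\le \max\{\,0.4\pi-1,\;0.2\,\} = 0.4\pi-1 < 1$ on $[0,1/4]$, and by evenness also on $[-1/4,1/4]$. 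Taking $\lambda_{1/4}:=0.4\pi-1\in(0,1)$ (or any value in $(0.4\pi-1,1)$ if one prefers strict inequality throughout) then gives the claimed estimate uniformly in $n\in\mathbb{Z}$.

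The only real obstacle is the quantitative bound $\lambda_{1/4}<1$; once one commits to the monotonicity of $\sin(x)/x$ and computes the two endpoint values above, everything follows by continuity and compactness, and the appearance of $\varepsilon$ in the statement plays no role beyond fixing the interval $[-1/4,1/4]$.
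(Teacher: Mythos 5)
Your argument is correct, and it is worth noting that the paper itself offers no proof of this proposition: it is quoted from \cite{GCB08}, with only the remark that one may take $\lambda_{1/4}=0.4\pi-1$. Your reduction via $1$-periodicity to the single inequality $|\delta-0.2\sin(2\pi\delta)|\leq\lambda_{1/4}|\delta|$ on $[-1/4,1/4]$, followed by the study of $h(\delta)=1-0.2\sin(2\pi\delta)/\delta$ using evenness and the strict monotonicity of $\sin(x)/x$ on $(0,\pi]$, is a clean self-contained verification, and it recovers exactly the constant $0.4\pi-1\approx 0.2566$ that the paper cites (the endpoint values $h(0)=1-0.4\pi$ and $h(1/4)=0.2$ are computed correctly, and $2\pi\delta\in(0,\pi/2]$ stays inside the monotonicity range). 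The only caveat is cosmetic: the proposition as printed says $|\sigma(n+\delta)-n|<\lambda_{1/4}\delta$, which read literally fails for $\delta\leq 0$; your reading with $|\delta|$ (and non-strict inequality at $\delta=0$, or a slightly enlarged $\lambda_{1/4}$) is the intended one, and with $\lambda_{1/4}=0.4\pi-1$ your monotonicity argument in fact gives the strict bound for all $\delta\neq 0$ since $h(\delta)\in\bigl(-(0.4\pi-1),\,0.2\bigr]$ there.
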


For instance (see \cite{GCB08}), we can take $\lambda_{1/4}=0.4\pi
-1\approx0.256637$. It follows from the proposition that for
any $n\in\mathbb{Z}$, every point $x\in B(n,1/4)$ will converge to
$n$ at a rate of $\lambda_{1/4}$ under the application of $\sigma$:
\[
|\sigma^{[k]}(x)-n|<\lambda^{k}_{1/4}\delta
\]
where $x=n+\delta$, $|\delta|<1/4$, and $\sigma^{[k]}(x)$ is the kth
iterate of $\sigma$. Note that the contraction rate is fixed a
priori (it depends on $\varepsilon$) and\ it is the same for all
$n\in\mathbb{Z}$. In some situations, this \textquotedblleft fixed a
priori\textquotedblright\ is undesirable, for we may need to specify
how fast a point of $B(n,1/4)$ should converge to $n$ under the
application of some special function. This is where the function
$l_{2}$ enters. However, this comes at a cost: the \textquotedblleft
dynamic\textquotedblright\ contraction rate is only valid around the
integers 0 and 1. In comparison, $\sigma$ allows a static
contraction rate around every integer $n\in\mathbb{Z}$.

Let us introduce the second special function $l_{2}$. The following result was
proved in \cite{GCB08}.

\begin{proposition}
\label{Prop:l2}Let $l_{2}:\mathbb{R}^{2}\rightarrow\mathbb{R}$ be given by
$l_{2}(x,y)=\frac{1}{\pi}\arctan(4y(x-1/2))+\frac{1}{2}.$ Suppose also that
$a\in\{0,1\}.$ Then%
\[
|a-l_{2}(\overline{a},y)|<\frac{1}{y}
\]
for any $\overline{a},y\in\mathbb{R}$ satisfying $\left\vert a-\overline
{a}\right\vert \leq1/4$ and $y>0.$
\end{proposition}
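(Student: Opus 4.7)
The plan is to exploit the natural reflection symmetry $l_2(1-x, y) = 1 - l_2(x, y)$, which follows immediately from oddness of $\arctan$, to reduce the two cases $a \in \{0, 1\}$ to the single case $a = 0$. Once that is done, the bound $|a - l_2(\overline{a}, y)| < 1/y$ should fall out from monotonicity of $\arctan$ together with the elementary identity $\arctan(y) + \arctan(1/y) = \pi/2$ valid for $y > 0$.

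Concretely, first I would fix $a = 0$ and observe that the hypothesis $|a - \overline{a}| \leq 1/4$ yields $\overline{a} - 1/2 \leq -1/4 < 0$, so $4y(\overline{a} - 1/2) \leq -y$. Applying monotonicity (and oddness) of $\arctan$ then gives
\[
0 < l_2(\overline{a}, y) \leq \tfrac{1}{2} - \tfrac{1}{\pi}\arctan(y).
\]
Next I would use the identity $\arctan(y) = \pi/2 - \arctan(1/y)$ to rewrite this upper bound as $\tfrac{1}{\pi}\arctan(1/y)$, and then invoke the standard inequality $\arctan(t) < t$ for $t > 0$ to conclude
\[
l_2(\overline{a}, y) < \tfrac{1}{\pi y} < \tfrac{1}{y}.
\]
Since $l_2(\overline{a}, y)$ is positive, the absolute value drops away and the case $a = 0$ is done.

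For $a = 1$, I would set $\tilde{a} := 1 - \overline{a}$, note that $|0 - \tilde{a}| \leq 1/4$, apply the previous case to obtain $|l_2(\tilde{a}, y)| < 1/y$, and then invoke the symmetry $l_2(\tilde{a}, y) = 1 - l_2(\overline{a}, y)$ to transfer the bound to $|1 - l_2(\overline{a}, y)|$. The argument is essentially elementary trigonometry and I do not anticipate a serious obstacle. The one thing worth watching is the bookkeeping of the sign of $\overline{a} - 1/2$ in each case, so that the inequality for $\arctan$ translates into an inequality for $|a - l_2(\overline{a}, y)|$ itself rather than for its complement $|1 - a - l_2(\overline{a}, y)|$; but this is immediate from the interval in which $\overline{a}$ lies.
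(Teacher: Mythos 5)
Your argument is correct and complete: the case $a=0$ follows exactly as you say from monotonicity of $\arctan$, the identity $\arctan(y)+\arctan(1/y)=\pi/2$ for $y>0$, and $\arctan(t)<t$, while the reflection $l_2(1-x,y)=1-l_2(x,y)$ (oddness of $\arctan$) cleanly transfers the bound to the case $a=1$; the needed positivity $0<l_2<1$ is immediate since $\arctan$ takes values in $(-\pi/2,\pi/2)$. Note that the paper itself gives no proof of this proposition --- it is quoted from the earlier reference [GCB08] --- so there is nothing to compare against within the paper; your self-contained elementary derivation is exactly the kind of argument one would expect there, and in fact you get the slightly stronger bound $|a-l_2(\overline{a},y)|\leq\frac{1}{\pi}\arctan(1/y)<\frac{1}{\pi y}$.
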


Thus, $l_{2}$ enjoys \textquotedblleft dynamic\textquotedblright\ contraction
rates around the integers $0$ and $1$. However, it also has an undesirable
feature: $0$ and $1$ are not fixed points of $l_{2}$. For this reason we
introduce a third special function $\xi_{2}$, which is built upon $l_{2}$. The
function $\xi_{2}$ inherits the same \textquotedblleft
dynamic\textquotedblright\ contraction rates around $0$ and $1$ from $l_{2}$,
but it has both $0$ and $1$ as its fixed points, i.e.\ $\xi_{2}(0,y)=0$ and
$\xi_{2}(1,y)=1$ for $y>0$.

We now describe the function $\xi_{2}$. It is easy to see that if $n$ is an
integer, then%
\begin{equation}
0\leq\frac{\sin^{2}(\pi(n+\varepsilon))}{4}\leq\varepsilon\label{Eq:aux2}%
\end{equation}
for $0\leq\varepsilon<1/2$. We define $\xi_{2}:\mathbb{R}\times\mathbb{R}%
^{+}\rightarrow\mathbb{R}$ by the formula:
\begin{equation}
\xi_{2}(x,y)=l_{2}\left(  x,\frac{4y}{\sin^{2}(\pi x)}\right)  \label{Eq:Eta2}%
\end{equation}
where $\mathbb{R}^{+}$ is the set of all positive real numbers. For
$y>0$ and $x\in\mathbb{Z}$, $\xi_{2}(x, y)$ is defined to be the
analytic continuation of the right-hand side of (\ref{Eq:Eta2}). It
is readily seen that $\xi_{2}(0, y)=0$ and $\xi_{2}(1, y)=1$ for all
$y>0$.

The last special function to be defined is called $\xi_{3}$, which is built
upon $\xi_{2}$. The new function $\xi_{3}$ extends the behavior of $\xi_{2}$
around $0$ and $1$ to include the number $2$.

\begin{proposition}
\label{Lema:Zeta3}Let
$\xi_{3}:\mathbb{R}\times\mathbb{R}^{+}\rightarrow \mathbb{R}$ be
given by
\[
\xi_{3}(x,y)=\xi_{2}((\sigma(x)-1)^{2},3y)\cdot
(2\xi_{2}(x/2,3y)-1)+1
\]
If $a=0,1,2$ and $\left\vert a-\bar{a}\right\vert \leq\varepsilon\leq1/4$,
then for all $y\geq2$,
\end{proposition}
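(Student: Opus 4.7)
The plan is to first verify the three fixed-point identities $\xi_3(a, y) = a$ for $a \in \{0, 1, 2\}$ and $y > 0$, and then bound $|a - \xi_3(\bar{a}, y)|$ (which, by analogy with Proposition \ref{Prop:l2}, should be shown to be less than $1/y$) by controlling how each factor of $\xi_3(\bar{a}, y)$ deviates from its value at $a$. The fixed-point identities follow by direct substitution using $\xi_2(0, \cdot) = 0$ and $\xi_2(1, \cdot) = 1$: at $x = 0$ one has $(\sigma(x)-1)^2 = 1$ and $2\xi_2(0, 3y) - 1 = -1$, yielding $\xi_3(0, y) = 0$; at $x = 1$ the factor $(\sigma(x)-1)^2$ vanishes, killing the product and leaving $\xi_3(1, y) = 1$; at $x = 2$ both $(\sigma(x)-1)^2$ and $2\xi_2(x/2, 3y) - 1$ equal $1$, giving $\xi_3(2, y) = 2$.

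Writing $\bar{a} = a + \delta$ with $|\delta| \leq 1/4$, I would first use Proposition \ref{Prop:sigma} to bound $|\sigma(\bar{a}) - a| \leq \lambda_{1/4}|\delta|$, which then controls the argument of $\xi_2$ in each factor of $\xi_3$. I would also record an auxiliary estimate, following the proof of Proposition \ref{Prop:l2}: for $n \in \{0, 1\}$ and $|\bar{u} - n| \leq 1/4$ one has $\sin^2(\pi \bar{u}) \leq 1/2$, hence $|n - \xi_2(\bar{u}, 3y)| < 1/(24y)$, with the bound extending to $\bar{u} = n$ by analytic continuation.

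For $a \in \{0, 2\}$, one checks that $(\sigma(\bar{a})-1)^2$ lies in $B(1, 1/4)$ and that $\bar{a}/2$ lies in $B(0, 1/8)$ or $B(1, 1/8)$ respectively. Both factors $A := \xi_2((\sigma(\bar{a})-1)^2, 3y)$ and $B := 2\xi_2(\bar{a}/2, 3y) - 1$ are within $O(1/y)$ of their values at $a$, and combining the two errors via the elementary product-perturbation estimate $|(b_1 + \epsilon_A)(b_2 + \epsilon_B) - b_1 b_2| \leq |b_1||\epsilon_B| + |b_2||\epsilon_A| + |\epsilon_A||\epsilon_B|$ delivers $|\xi_3(\bar{a}, y) - a| < 1/y$ once the constants are tallied.

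The main obstacle is the case $a = 1$. Here $\bar{a}/2 \in [3/8, 5/8]$ sits near $1/2$, not near an integer, so $\xi_2(\bar{a}/2, 3y)$ need not be close to $0$ or $1$, and the factor $B$ admits only the crude bound $|B| \leq 1$ coming from $\xi_2 \in [0,1]$. The saving grace is the other factor: since $(\sigma(\bar{a}) - 1)^2 \leq \lambda_{1/4}^2 \delta^2 \leq 1/16$ lies in $B(0, 1/4)$, the auxiliary lemma gives $|A| < 1/(24y)$, and therefore $|\xi_3(\bar{a}, y) - 1| = |AB| \leq |A| \cdot 1 < 1/(24y) < 1/y$ for $y \geq 2$. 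This asymmetric treatment of $A$ and $B$ is exactly why $\xi_3$ is defined as a product rather than a sum: the squared factor $(\sigma(x)-1)^2$ manufactures a quadratically small quantity near $x = 1$ that absorbs the indeterminate value of $2\xi_2(x/2, 3y) - 1$ at the troublesome half-integer.
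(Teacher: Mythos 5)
Your proposal is correct and takes essentially the same route as the paper's own proof: check the fixed-point identities by direct substitution, then for perturbed inputs use Proposition \ref{Prop:sigma} to place $(\sigma(\bar a)-1)^2$ near $1$ (for $a=0,2$) or near $0$ (for $a=1$), apply Proposition \ref{Prop:l2} to the resulting $\xi_2$-factors, fall back on $2\xi_2-1\in[-1,1]$ for the second factor when $a=1$, and combine via a product-perturbation estimate to get $|\xi_3(\bar a,y)-a|<1/y$. The only differences are cosmetic: the paper writes out $a=0$ and dismisses $a=1,2$ with ``the same argument applies'' (having noted $0<\xi_2<1$ up front), whereas you make the asymmetric $a=1$ case explicit; and, like the paper's own write-up, you establish the $1/y$ bound at $\varepsilon=1/4$ rather than tracking the $\varepsilon$-dependence of the stated $\varepsilon/y$ estimate, which would follow by noting via (\ref{Eq:aux2}) that the $\xi_2$-errors scale with the distance of the argument to the nearest integer.
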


\begin{enumerate}
\item If $\bar{a}=a$, then $\xi_{3}(\bar{a},y)=a$;

\item If $\bar{a}\neq a$, then%
\[
\left\vert \xi_{3}(\bar{a},y)-a\right\vert \leq\frac{\varepsilon}{y}%
\]

\end{enumerate}

\begin{proof} It is easy to verify (1). To prove (2), let us first
note $0<\xi_{2}(x,y)<1$ for all $x\in\mathbb{R}$ and
$y\in\mathbb{R}^{+}$.
Consider the case where $a=0$ and $\overline{a}\in\lbrack-1/4;1/4]$
(i.e. $\varepsilon=1/4$). Then $\left\vert (\sigma(\overline{a})-1)^{2}%
-1\right\vert <1/4$ by Proposition \ref{Prop:sigma}, and by Proposition \ref{Prop:l2},%
\[
1-1/y<\xi_{2}((\sigma(\overline{a})-1)^{2},y)<1
\]
Similarly, we conclude
\[
-1<2\xi_{2}(\overline{a}/2,y)-1<-1+2/y
\]
Since $y\geq2,$ this implies%
\[
-1<\xi_{2}((\sigma(\overline{a})-1)^{2},y)(2\xi_{2}(\overline{a}%
/2,y)-1)<(1-1/y)(-1+2/y)
\]
or%
\[
0<\xi_{2}((\sigma(\overline{a})-1)^{2},y)(2\xi(\overline{a}/2,y)-1)+1<3/y
\]
Hence, for $a=0,$ $|a-\xi_{3}(\overline{a},y)|<1/y.$ The same
argument applies to the cases  $a=1$ and $a=2$.
\end{proof}

\subsection{Simulation of Turing machines with maps\label{Sec:MT}}

\subsubsection{Turing machines\label{Sec:Configurations}}

To construct the function $f$ defined in Theorem \ref{Th:simulation}, we have
to encode the behavior of a given Turing machine as the iteration of $f$. To
do this, we need to know a bit more about TMs.

This subsection provides a more detailed description of Turing
machines. It should be noted though that, to keep this subsection
short, the description given here is relatively brief and adapted to
the contents of this paper. It is not intended to give the intuition
behind the model nor to explain why it naturally relates to
algorithms. The reader interested in such aspects is referred to the
excellent introductory textbook \cite{Sip05}. The Definition of TM
used here is not (but is equivalent to) the standard definition in
the literature.

A Turing machine works on triples (configurations) from $(\Sigma
\cup\{B\})^{\ast}\times(\Sigma\cup\{B\})^{\ast}\times\{1,2,\ldots,m\}$
where $\Sigma$ is a finite and non-empty set (set of symbols),
$B\notin\Sigma$ ($B$ is the blank symbol), $(\Sigma
\cup\{B\})^{\ast}$ denotes the set of all finite sequences of
elements from $\Sigma \cup\{B\}$, and $m\geq2$ is some integer. For
example, if $\Sigma=\{0,1\}$, then $(0,1,1,0)\in\Sigma^{\ast}$,
which is usually written as $0110\in\Sigma^{\ast}$. The set
$\{1,2,\ldots,m\}$ is the set of states. The initial state is $1$
and the halting state is $m$. An input to a TM is an element
$w\in\Sigma^{\ast}$; the corresponding initial configuration is
$(w,B,1)$. The Turing machine then updates the configuration%
\begin{equation}
(a_{n}\ldots a_{0},a_{-1}\ldots a_{-k},q), \quad  a_{i}, a_{-j}\in\Sigma\cup\{B\}\label{Eq:conf}%
\end{equation}
according to some fixed rule that depends only on $a_{0}$ (symbol
read by the head) and the state $q$. Depending only on the value
$a_{0}$ and $q$, the TM will perform the following tasks:

(i) update the state $q$ to a new state $q^{\prime}\in\{1,2,\ldots,m\}$ (it
may be $q^{\prime}=q$);

(ii) change the symbol $a_{0}\in\Sigma\cup\{B\}$ to a new symbol
$a_{0}^{\prime}\in\Sigma\cup\{B\}$ (it may be $a_{0}^{\prime}=a_{0}$);

(iii) after performing step (ii) it may \textquotedblleft move the head to
left\textquotedblright\ (cf.\ \cite{Sip05}) yielding the new configuration%
\[
(a_{n}\ldots a_{1},a_{0}^{\prime}a_{-1}\ldots a_{-k},q^{\prime})
\]
or it may \textquotedblleft move the head to right,\textquotedblright%
\ yielding the new configuration%
\[
(a_{n}\ldots a_{1}a_{0}^{\prime}a_{-1},a_{-2}\ldots a_{-k},q^{\prime})
\]
or it may \textquotedblleft not move the head,\textquotedblright\ yielding the
configuration%
\[
(a_{n}\ldots a_{1}a_{0}^{\prime},a_{-1}a_{-2}\ldots a_{-k},q^{\prime})
\]
When performing step (iii), if we obtain a sequence of zero length
in one of the first two components of the configuration, we replace
it by the symbol $B$.

Configurations are updated step-by-step with these rules until the
state eventually reaches the halting state $m$. In this case, the TM
halts; we have reached a halting configuration.

One can code configurations of Turing machines as elements of $\mathbb{N}^{3}%
$. It suffices to have the number 0 correspond to the symbol $B$ and numbers
$\{1,\ldots,l\}$ to elements of $\Sigma$, where $l=\#\Sigma$ (cardinality of
$\Sigma$). Then the configuration (\ref{Eq:conf}) can be seen as a triple
\begin{equation}
(y_{1},y_{2},q)\in\mathbb{N}^{3} \label{Eq:TripleCode}%
\end{equation}
where
\begin{align}
y_{1}  &  =a_{0}+a_{1}l+...+a_{n}l^{n}\label{3:Tape_encoding}\\
y_{2}  &  =a_{-1}+a_{-2}l+...+a_{-k}l^{k-1}\nonumber
\end{align}
Thus, to define a Turing machine, it suffices to know how to go from
one configuration to the next one. In other words, a Turing machine
$M$ can be defined by its transition function
$f_{M}:\mathbb{N}^{3}\rightarrow\mathbb{N}^{3}$.

\subsubsection{Determining the next action - Interpolation
techniques\label{Sec:3.NextAction}}

Let us fix a Turing machine $M$ in the remainder of Section
\ref{Sec:DiscreteTime}. Recall that our ultimate objective (Theorem
\ref{Th:simulation}) is to build an analytic and non-expanding map
$f:\mathbb{R}^3\to\mathbb{R}^3$ that simulates the behavior of the
Turing machine $M$, or in other words, the restriction of $f$ on
$\mathbb{N}^3$ is the transition function $f_{M}$ of the machine
$M$.

We know that given a configuration (\ref{Eq:conf}), the computation of the
next configuration only depends on $a_{0}$ and $q$. So, when given a
configuration in the form of (\ref{Eq:TripleCode}) as an input to $f$, we have
to extract the values of $a_{0}$ and $q$ in order to find what $M$ (and
therefore $f$) is supposed to do. The value $q$ is readily available as the
last component of (\ref{Eq:TripleCode}). However $a_{0}$ must be extracted
from the first component $y_{1}$.\smallskip

\noindent\textbf{Extracting the symbol }$\boldsymbol{a}_{0}$\textbf{.}
Consider an analytic extension $\bar{\omega}:\mathbb{R}\rightarrow\mathbb{R}$
of the function $g:\mathbb{N}\rightarrow\mathbb{N}$ defined by
$g(n)=n\operatorname{mod}l$ (in the case where the tape alphabet of the TM has
$l$ symbols). It follows from the coding (\ref{3:Tape_encoding}) that
$\bar{\omega}(y_{1})=a_{0}$, i.e., $\bar{\omega}$ extracts $a_{0}$ from
$y_{1}$. We also require $\bar{\omega}$ to be a periodic function, of period
$l$, such that $\bar{\omega}(i)=i$, for $i=0,1,...,l-1$. The function
$\bar{\omega}$ can be constructed by using trigonometric interpolation
(cf.~\cite[pp. 176-182]{Atk89}), which produces an analytic as well as
periodic computable function. For example, if $l=10$, then one can define
$\bar{\omega}$ as follows:%
\begin{equation}
\bar{\omega}(x)=\alpha_{0}+\alpha_{5}\cos(\pi x)+\left(  \sum_{j=1}^{4}%
\alpha_{j}\cos\left(  \frac{j\pi x}{5}\right)  +\beta_{j}\sin\left(
\frac{j\pi x}{5}\right)  \right)  \label{3:Omega}%
\end{equation}
where%
\begin{align*}
\alpha_{0}  &  =9/2\text{, \ \ \ }\alpha_{1}=\alpha_{2}=\alpha_{3}=\alpha
_{4}=-1\text{, \ \ \ }\alpha_{5}=-1/2\\
\beta_{1}  &  =-\sqrt{5+2\sqrt{5}}\text{, \ \ }\beta_{2}=-\sqrt{1+\frac
{2}{\sqrt{5}}}\text{, \ \ }\beta_{3}=-\sqrt{5-2\sqrt{5}}\text{, \ \ }\beta
_{4}=-\sqrt{1-\frac{2}{\sqrt{5}}}%
\end{align*}
The construction ensures that, over integer arguments,
$\bar{\omega}$ gives exact results; but it does not guarantee that
$\bar{\omega}$ is non-expanding around integers, which is needed for
condition (\ref{Eq:Th_main}). To meet  the non-expanding
requirement, we compose $\bar{\omega}$ with the \textquotedblleft
uniform contraction\textquotedblright function $\sigma$ as
follows: Let $K$ be some integer such that%
\[
K\geq\max_{x\in\lbrack0,l]}\left\vert
\bar{\omega}^{\prime}(x)\right\vert =\max
_{x\in\mathbb{R}}\left\vert \bar{\omega}^{\prime}(x)\right\vert
\]
where $\bar{\omega}^{\prime}$ is the derivative of $\bar{\omega}$,
and let $k\in\mathbb{N}$ be such that $K\lambda_{1/4}^{k}\leq1$,
where $\lambda_{1/4}$ is given in Proposition \ref{Prop:sigma} (its
exact value is given right after Proposition \ref{Prop:sigma}). Then
define
\begin{equation}
\omega=\bar{\omega}\circ\sigma^{\lbrack k]} \label{Eq:ImprovedOmega}%
\end{equation}
It follows that if $y_{1}$ is given by (\ref{3:Tape_encoding}), then
$\omega(y_{1})=\bar{\omega}\circ\sigma^{\lbrack k]}(y_{1})=\bar{\omega}(y_1)=a_{0}$
for $\sigma^{\lbrack k]}(y_{1})=y_1$;
thus $\omega$ extracts $a_{0}$ from $y_{1}$. Moreover, $\omega$ has
the desired non-expanding property as shown below: for any $y_{1}$
given by (\ref{3:Tape_encoding}) and $y\in\mathbb{R}$,
\begin{align*}
\left\vert y-y_{1}\right\vert  &  \leq\varepsilon\leq1/4\text{\ \ }%
\Longrightarrow\text{ \ }\left\vert \sigma^{\lbrack
k]}(y)-y_{1}\right\vert
\leq\varepsilon\lambda_{1/4}^{k}\text{\ \ }\Longrightarrow\text{ \ }\\
\left\vert \bar{\omega}\circ\sigma^{\lbrack k]}(y)-\bar{\omega}(y_{1}%
)\right\vert  &  \leq\varepsilon K\lambda_{1/4}^{k}\text{\ \
}\Longrightarrow \text{ \ \ }\left\vert
\omega(y)-\omega(y_{1})\right\vert \leq\varepsilon
\end{align*}

\noindent\textbf{Encoding the next action to be performed.} After
knowing $a_{0}$ and $q$, we now need to encode the next action to
be performed by the  machine $M$, i.e. the new symbol to be
written in the configuration, the next move to be performed, and the
new state. Using Lagrange interpolation we can encode each of these performances
by an analytic function.

Let $Q_{j}, S_{i}:\mathbb{R}\to\mathbb{R}$, $0\leq i\leq l$ and $1\leq j\leq m$,
be the functions defined as follows: %
\[
Q_{j}(x)=\prod_{\substack{k=1\\k\neq j}}^{m}\frac{(x-k)}{(j-k)}\text{,
\ \ \ }S_{i}(x)=\prod_{\substack{k=0\\k\neq i}}^{l}\frac{(x-k)}{(i-k)}
\]
Note that%
\begin{align*}
Q_{j}(x)  &  =\left\{
\begin{array}
[c]{l}%
0,\text{ if }x=1,...,j-1,j+1,...,m\\
1,\text{ if }x=i
\end{array}
\right.  \text{ \ \ and \ \ }\\
S_{i}(x)  &  =\left\{
\begin{array}
[c]{l}%
0,\text{ if }x=0,...,i-1,i+1,...,l\\
1,\text{ if }x=i
\end{array}
\right.
\end{align*}

Suppose that on symbol $i$ and state $j$, the state of the next configuration
is $q_{i,j}.$ Then the state that follows symbol $a_{0}$ and state $q$ is
given by%
\begin{equation}
\overline{q_{next}}(a_{0},q)=\sum_{i=0}^{l}\sum_{j=1}^{m}S_{i}(a_{0}%
)Q_{j}(q)q_{i,j} \label{3:Interpolacao_Polinomial}%
\end{equation}
A similar procedure can be used to determine the next symbol to be
written and the next move. It is easy to see that
$\overline{q_{next}}(a_0, q)$ returns the state of the next
configuration if the machine $M$ is in the state $q$ reading the
symbol $a_0$.
But again $\overline{q_{next}}$ may fail to
be non-expanding. 
This problem is dealt with similarly as in the previous case. Let $K$ be some integer
such that%
\[
K\geq\max_{\substack{s\in\lbrack-1,l+1]\\q\in\lbrack0,m+1]}}\left\Vert
\nabla\overline{q_{next}}(s,q)\right\Vert _{2}%
\]
where $\nabla\overline{q_{next}}$ is the gradient of $\overline{q_{next}}$ and
$\left\Vert \cdot\right\Vert _{2}$ is the Euclidean norm,  and let
$k\in\mathbb{N}$ be such that $K\lambda_{1/4}^{k}\leq1$ (again $\lambda_{1/4}$
is given by Proposition \ref{Prop:sigma}). Now we define $q_{next}:\mathbb{R}^2\to\mathbb{R}$
as follows:  $q_{next}(x_{1}, x_{2})=\overline{q_{next}}(\sigma^{\lbrack k]}(x_{1}), \sigma^{\lbrack k]}(x_{2}))$
for all $x_{1}, x_{2}\in\mathbb{R}$.
The function $q_{next}%
$\ satisfies the following two conditions: for all $i=0,\ldots,l$,
$j=1,\ldots,m$, $q_{next}(i,j)=\overline{q_{next}}(\sigma^{\lbrack
k]}(i),\sigma^{\lbrack k]}(j))=q_{i,j}$ and it is non-expanding as shown
below:
\begin{align*}
\left\Vert (x_{1},x_{2})-(i,j)\right\Vert  &  \leq\varepsilon\leq
1/4\text{\ \ }\Longrightarrow\text{ \ }\left\Vert (\sigma^{\lbrack k]}%
(x_{1}),\sigma^{\lbrack k]}(x_{2}))-(i,j)\right\Vert \leq\varepsilon
\lambda_{1/4}^{k}\text{\ \ }\Longrightarrow\text{ \ }\\
\left\Vert \overline{q_{next}}(\sigma^{\lbrack k]}(x_{1}),\sigma^{\lbrack
k]}(x_{2}))-\overline{q_{next}}(i,j)\right\Vert  &  \leq\varepsilon
K\lambda_{1/4}^{k}\text{\ \ }\Longrightarrow\text{ \ \ }\left\Vert
q_{next}(x_{1},x_{2})-q_{next}(i,j)\right\Vert \leq\varepsilon
\end{align*}

\subsection{Proof of Theorem \ref{Th:simulation}\label{Proof:Simulation}}

We need to construct an analytic and non-expanding map $f$ that simulates the behavior
of the machine $M$ (i.e. $f$ is an extension of the transition function $f_{M}$ of $M$) and satisfies the condition (\ref{Eq:Th_main}).
As a first
step, we construct a map $\tilde{f}:\mathbb{R}^{3}\rightarrow\mathbb{R}^{3}$
that has all the properties of the function $f$ in Theorem \ref{Th:simulation}%
, except that $\tilde{f}$ satisfies the condition (\ref{Eq:Th_main})
with $\lambda=1$ rather than the desired $\lambda\in(0, 1)$. To
remedy this deficiency, we make use of the \textquotedblleft uniform
contraction\textquotedblright $\ $function $\sigma$ again and let
$f(x_{1}, x_{2}, x_{3})=\tilde {f}(\sigma(x_{1}), \sigma(x_{2}),
\sigma(x_{3}))$ for all $x=(x_{1}, x_{2}, x_{3})\in\mathbb{R}^3$.
Then $f$ would satisfy the condition (\ref{Eq:Th_main}) with
$\lambda=\lambda_{1/4}=0.4\pi-1$. Since $\tilde{f}$ and $\sigma$ are
both analytic, so is $f$. Moreover, if $\tilde{f}$ simulates the
machine $M$, then so does $f$ because $f(n_{1}, n_{2}, n_{3})=\tilde
{f}(\sigma(n_{1}, \sigma(n_{2}), \sigma(n_{3}))=\tilde{f}(n_{1},
n_{2}, n_{3})$ for all $(n_{1}, n_{2}, n_{3})\in\mathbb{N}^3$. Thus
$f$ satisfies all conditions of Theorem \ref{Th:simulation} and the
proof of Theorem \ref{Th:simulation} is then complete.

The remainder of this subsection is devoted to construction of the
function $\tilde{f}$. We assume $0\leq\varepsilon\leq1/4$ throughout
subsection \ref{Proof:Simulation}.

\begin{enumerate}
\item \textbf{Extract the symbol }$\boldsymbol{a}_{0}$\textbf{.} Let $a_{0}$
be the symbol being actually read by the machine $M$. Then
$\omega(y_{1})=a_{0},$ where $\omega$ is given by (\ref{Eq:ImprovedOmega}).
Moreover, $\omega$ is analytic and non-expanding around integers, as we have seen.

\item \textbf{Encode the next state. }The map $q_{next}$ returns the next
state and 
is non-expanding around meaningful integer vectors, where an integer vector
$(s, q)$ is called meaningful if $s$ codes a symbol and $q$ codes a state.

\item \textbf{Encode the symbol to be written on the tape.} Similarly to
the state, we can define a map $s_{next}:\mathbb{R}^{2}\rightarrow\mathbb{R}$
such that $s_{next}(i,j)$ returns the next symbol to be written on the tape if
the machine $M$ is reading symbol $i$ and is in state $j$. This map is
non-expanding around meaningful integer vectors $(i,j)$.

\item \textbf{Encode the direction of the move for the head.} Let $h$
denote the direction of the move of the head, where $h=0$ indicates a move to
the left, $h=1$ a \textquotedblleft no move\textquotedblright, and
$h=2$ a move to the right. Then, similarly to the state, we can define
a map $h_{next}:\mathbb{R}^{2}\rightarrow\mathbb{R}$ such that $h_{next}(i,j)$
returns the next move to be written on the tape if the machine $M$ is
reading symbol $i$ and is in state $j$. This map is non-expanding around
meaningful integer vectors $(i,j)$.

\item \textbf{Update the tape contents.} In the absence of error, the
\textquotedblleft next value\textquotedblright\ of $y_{1}$, $\overline
{y_{1}^{next}}$, is given by Lagrange interpolation as a function of $y_{1}$,
$y_{2}$, and $q$ as follows (to simplify notation, let us use $s_{next}$ and $h_{next}$
to represent $s_{next}(\omega(y_1), q)$ and $h_{next}(\omega(y_1), q)$ respectively):
\begin{gather}
\overline{y_{1}^{next}}(y_{1},y_{2},s_{next},h_{next})=(l\cdot(y_{1}%
+s_{next}-\omega(y_{1}))+\omega(y_{2}))\frac{(1-h_{next})(2-h_{next})}%
{2}\label{Eq:y1_next}\\
+(y_{1}+s_{next}-\omega(y_{1}))h_{next}(2-h_{next})+\frac{y_{1}-\omega(y_{1}%
)}{l}\frac{h_{next}(1-h_{next})}{-2}\nonumber
\end{gather}
When the head moves left, doesn't move, or moves right, the first, second, or
third term on the right-hand side gives the \textquotedblleft next
value\textquotedblright\ of $y_{1}$, respectively. A function $\overline
{y_{2}^{next}}$ giving the \textquotedblleft next value\textquotedblright\ of
$y_{2}$ can be constructed in a similar way.
\end{enumerate}

When $y_{1}$ and $y_{2}$ are given by (\ref{3:Tape_encoding}) and
$q$ is a state (such $y_{1}$, $y_{2}$, and $q$ are called meaningful
integers), $\overline{y_{1}^{next}}(y_1, y_2, s_{next},h_{next})$
returns the exact value of $y_{1}$ in (\ref{3:Tape_encoding}) for
the next configuration.
Unfortunately, there is a problem with this function:
$\overline{y_{1}^{next}}$ is not only expanding, but also in a way
harder to deal with than in the case of $\overline{q_{next}}$. Here
is why: let $A: \mathbb{R}^2\to\mathbb{R}$ be the simple product
defined by $A(x, y)=x\cdot y$. Then for any $\varepsilon, \delta>0$,
$A(x+\varepsilon,y+\delta)-A(x,y)=\varepsilon\delta+\varepsilon
y+\delta x$. Thus, as $x$ and $y$ grow, the difference
$A(x+\varepsilon,y+\delta)-A(x, y)$ also grows, even when
$\varepsilon$ and $\delta$ are held constant. Since each term on the
right-hand side of (\ref{Eq:y1_next}) is a product containing
$y_{1}$, and $y_{1}$ can grow arbitrarily large, it follows that the
map $\overline{y_{1}^{next}}$ is expanding around positive integers.
Moreover, the rate of expansion is non-uniform because the rate
depends on the module of $y_{1}$. Due to this non-uniform expansion
nature of $\overline{y_{1}^{next}}$, in comparison with the
deduction of $q_{next}$ from $\overline{q_{next}}$, more work is
needed in order to build a non-expanding \textquotedblleft next
value\textquotedblright\ function $y_{1}^{next}$ from
$\overline{y_{1}^{next}}$.


So we need to build a map $y^{next}_{1}$ satisfying the following two
conditions: for any $y_{1}, y_{2}, s_{next}, h_{next}\in\mathbb{N}$,
\ $\overline{y_{1}},\overline{y_{2}},\overline{s_{next}},\overline{h_{next}%
}\in\mathbb{R}$ with $\left\Vert (y_{1},y_{2},s_{next},h_{next})-\right.
\newline\left. (\overline{y_{1}},\overline{y_{2}},\overline{s_{next}%
},\overline{h_{next}})\right\Vert < \varepsilon\leq1/4$,%
\[
y_{1}^{next}(y_{1},y_{2},s_{next},h_{next})=\overline{y_{1}^{next}}%
(y_{1},y_{2},s_{next},h_{next})
\]
and
\begin{equation}
\left\Vert y_{1}^{next}(y_{1},y_{2},s_{next},h_{next})-y_{1}^{next}%
(\overline{y_{1}},\overline{y_{2}},\overline{s_{next}},\overline{h_{next}%
})\right\Vert \leq\varepsilon\label{Eq:y1_not_expanding}%
\end{equation}
that is, $y_{1}^{next}$ gives the \textquotedblleft next
value\textquotedblright of $y_{1}$ as $\overline{y^{next}_{1}}$ does and
$y^{next}_{1}$ is non-expanding. To build this map, we use the interpolation
idea behind (\ref{Eq:y1_next}), which of course has to be improved for the
reasons mentioned above.

Let us write $y_{1}^{next}=P_{stay}+P_{left}+P_{right}$, where $P_{stay}%
(y_{1},y_{2},s_{next},h_{next})$ is a term that is non-zero only when $h=1$,
i.e.\ when the head does not move, and its value is the \textquotedblleft next
value\textquotedblright\ of $y_{1}$ in this case. The other terms $P_{left},$
and $P_{right}$ are defined similarly. If we can construct the parcels
$P_{left},P_{stay},P_{right}$ such that%
\begin{align}
\left\Vert P_{left}(y_{1},y_{2},s_{next},h_{next})-P_{left}(\overline{y_{1}%
},\overline{y_{2}},\overline{s_{next}},\overline{h_{next}})\right\Vert  &
\leq\varepsilon/3\label{Conditions_P}\\
\left\Vert P_{stay}(y_{1},y_{2},s_{next},h_{next})-P_{stay}(\overline{y_{1}%
},\overline{y_{2}},\overline{s_{next}},\overline{h_{next}})\right\Vert  &
\leq\varepsilon/3\nonumber\\
\left\Vert P_{right}(y_{1},y_{2},s_{next},h_{next})-P_{right}(\overline{y_{1}%
},\overline{y_{2}},\overline{s_{next}},\overline{h_{next}})\right\Vert  &
\leq\varepsilon/3\nonumber
\end{align}
then (\ref{Eq:y1_not_expanding}) is satisfied, i.e.,\ $y_{1}^{next}$ is
non-expanding. Let us show how to obtain a parcel $P_{stay}$ with the above
property. The same argument can be applied to the parcels $P_{left}$ and $P_{right}$.

The parcel $P_{stay}$ is constructed by modifying the term
\[
(y_{1}+s_{next}-\omega(y_{1}))h_{next}(2-h_{next})
\]
in (\ref{Eq:y1_next}) (we recall that this term gives the \textquotedblleft
next value\textquotedblright of $y_{1}$ if the head doesn't move). We intend
to define $P_{stay}$ to be the product of two functions $C(\cdot,\cdot)$ and
$D(\cdot)$, where $C(y, z)=\sigma^{\lbrack2]}(y)+\sigma^{\lbrack2]}%
(z)-\sigma^{\lbrack2]}(\omega(y))\ $ and $D(x)=x(2-x)$. Since $0<\varepsilon<1/4$
and $\left\vert \left(  y_{1}+s_{next}%
-\omega(y_{1})\right)  -\left(  \overline{y_{1}}+\overline{s_{next}}%
-\omega(\overline{y_{1}})\right)  \right\vert \leq3\varepsilon$, the following
holds true:
\[
\left\vert C(\overline{y_{1}},\overline{s_{next}})-\left(  y_{1}%
+s_{next}-\omega(y_{1})\right)  \right\vert \leq3\varepsilon\lambda_{1/4}%
^{2}<\frac{\varepsilon}{5}%
\]
In other words, the function $C(\cdot, \cdot)$ is non-expanding. Thus, to
guarantee that the product $C(\cdot,\cdot)D(\cdot)$ is non-expanding, it suffices
to find the condition that ensures non-expansiveness of the function $D$.
To this end, we observe that
\[
\max_{x\in\lbrack-1,3]}\left\vert D^{\prime}(x)\right\vert =4
\]
and it then follows that $\left\vert D(w)-D(h_{next})\right\vert
\leq4\delta$ provided $\left\vert w-h_{next}\right\vert
\leq\delta\leq1/4$ (recall that $h_{next}\in\{ 0, 1, 2\}$).
In particular, since $\left\vert C(\overline{y_{1}%
},\overline{s_{next}})\right\vert \leq\overline{y_{1}}+l$ and $\left\vert
D(h_{next})\right\vert \leq1$, the following estimate holds:%
\begin{gather}
\left\vert C(\overline{y_{1}},\overline{s_{next}})D(w)-C(y_{1},s_{next}%
)D(h_{next})\right\vert \leq\label{Eq:aux1}\\
\left\vert C(\overline{y_{1}},\overline{s_{next}})(D(w)-D(h_{next}%
))\right\vert +\left\vert D(h_{next})(C(\overline{y_{1}},\overline{s_{next}%
})-C(y_{1},s_{next}))\right\vert \leq\nonumber\\
(\overline{y_{1}}+l)4\delta+\frac{\varepsilon}{5}\nonumber
\end{gather}
Therefore, if we can compute some $w=\theta(\overline{h_{next}},\overline
{y_{1}})$ satisfying $\left\vert w-h_{next}\right\vert \leq\delta$ with
\begin{equation}
(\overline{y_{1}}+l)4\delta\leq\frac{\varepsilon}{10},\text{ \ \ }%
\mbox{or equivalently,}\text{ \ \ }\delta\leq\frac{\varepsilon}{40(\overline{y_{1}}+l)}
\label{Eq:aux3}%
\end{equation}
then we can define $P_{stay}$ as follows:
\[
P_{stay}(\overline{y_{1}},\overline{y_{2}},\overline{s_{next}},\overline
{h_{next}})=C(\overline{y_{1}},\overline{s_{next}})D(\theta(\overline
{h_{next}},\overline{y_{1}}))
\]
From (\ref{Eq:aux1}) and (\ref{Eq:aux3}) it follows that $P_{stay}$
satisfies the second condition of (\ref{Conditions_P}); thus $P_{stay}$ is non-expanding.

It therefore remains to define a function $\theta$ that has the following
property:
\[
\left\vert \theta(\overline{h_{next}},\overline{y_{1}})-h_{next}\right\vert
\leq\frac{\varepsilon}{40(\overline{y_{1}}+l)}%
\]
From Proposition \ref{Lema:Zeta3} and the facts that $\left\vert
\overline {h_{next}}-h_{next}\right\vert \leq\varepsilon$ and
$h_{next}\in\{0,1,2\}$, it becomes clear that we can make use of the
special function $\xi_{3}$ to get
the desired function $\theta$ as follows:%
\[
\theta(\overline{h_{next}},\overline{y_{1}})=\xi_{3}(\overline{h_{next}%
},40(\overline{y_{1}}+l))
\]
With $\theta$ defined as above, $P_{stay}$ is non-extending. Moreover, we
observe that
\[
P_{stay}(y_{1},y_{2},s_{next},h_{next})=C(y_{1},s_{next})D(\theta
(h_{next},y_{1}))=\overline{y_{1}^{next}}(y_{1},y_{2},s_{next},h_{next})
\]
since $\sigma^{\lbrack2]}(n)=n$ for any $n\in\mathbb{N}$ and $\xi_{3}%
(h_{next},40(y_{1}+l))=h_{next}$; thus the first condition imposed on
$y_{1}^{next}$ is also satisfied.

\medskip

We now have all the pieces needed to build the map $\tilde{f}:\mathbb{R}%
^{3}\mathbb{\rightarrow R}^{3}$ mentioned at the beginning of this section. It
is defined as follows:%
\begin{align*}
\tilde{f}(y_{1},y_{2},q)  &  =(y_{1}^{next}(y_{1},y_{2},s_{next}(\omega
(y_{1}),q),h_{next}(\omega(y_{1}),q)),\\
&  y_{2}^{next}(y_{1},y_{2},s_{next}(\omega(y_{1}),q),h_{next}(\omega
(y_{1}),q)),\\
&  q_{next}(\omega(y_{1}),q))
\end{align*}
It is also easy to see that $\tilde{f}$ and therefore $f$ are both
computable (they are defined by composing standard computable
functions and some of their analytic continuations, and therefore
are computable; see \cite{PR89}, \cite{BHW08}).

\section{The continuous-time case}\label{SecContinuousTime}

Now that we have proved Theorem \ref{Th:Main_discrete}, we proceed
to prove Theorem \ref{Th:Main_continuous}. To construct the system
$y^{\prime}=g(y)$ of Theorem \ref{Th:Main_continuous}, the idea is
to embed the discrete-time system defined in Theorem
\ref{Th:Main_discrete} in a continuous-time system, as we noted in
Section \ref{Sec:Results roadmap}. We strongly suggest that the
reader  use Section \ref{Sec:Results roadmap}\ as a guide for the
next sections. Section \ref{Sec:Branickyinfty} is basically material from \cite{Bra95}, 
\cite{CMC00}, \cite{CM01}, \cite{Cam02b}, which is used as a building block for Section \ref{Sec:Branickyanalytic} which is material from \cite{GCB08}. Those sections will provide enough background to prove the completely new results of Sections \ref{Sec:Hyper}, \ref{Sec:Simulate_TM}, and \ref{Sec:Hyperbolic}.

\subsection{Simulations of Turing machines with ODEs - non-analytic case\label{Sec:Branickyinfty}}


In this subsection we show how to iterate a map from integers to integers with
smooth (but non-analytic) ODEs. By a smooth ODE we mean an ODE%
\begin{equation}
y^{\prime}=g(t,y) \label{3:ODE_f}%
\end{equation}
where $g$ is of class $C^{k}$ for $1\leq k\leq\infty$. This
construction will be refined later (Subsection
\ref{Sec:Branickyanalytic}) to include the analytic case. For the
non-analytic case, we make use of the construction presented\ by
Branicky in \cite{Bra95}, but following the approach of
\cite{CMC00}, \cite{CM01}, \cite{Cam02b}, \cite{GCB08}, which allows
iteration of a map with a smooth ODE.
We say that an ODE $y^{\prime}=g(t,y)$ iterates a map $f$ if $\vert
y(k, y_0)-f^{[k]}(y_0)\vert <\gamma$, $k\in\mathbb{N}$, for some
$\gamma>0$, where $y(t, y_0)$ is the solution to the initial-value
problem $y^{\prime}=g(t,y)$ and $y(0)=y_0$. Note that if we iterate
the map  given in Theorem \ref{Th:simulation} with a smooth ODE,
then this ODE simulates a TM simultaneously.

The construction that allows one to iterate a map with a smooth ODE is given
below (Construction \ref{Construction_Branicky}). It is preceded by two
auxiliary results (Constructions \ref{Construction_Target}
and\ \ref{Construction_r}). For simplicity, the constructions are presented on $\mathbb{R}$.

The first construction, Construction \ref{Construction_Target},
presents an ODE whose trajectories target a given value at a
specified time, whatever the initial states. This is needed later,
for example, to update $y_{A}$ to the targeted value $f(y_{B})$ at
time $t=1/2$, as the reader may recall from Section \ref{Sec:Results
roadmap}.

\begin{notation}
\label{Construction_Target}Consider a point $b\in\mathbb{R}$ (the
\emph{target}), some $\gamma>0$ (the \emph{targeting error}), and
times $t_{0}\geq0$ (\emph{departure time}) and $t_{1}$
(\emph{arrival time}), with $t_{1}>t_{0}$. Then we obtain an
initial-value problem (IVP) defined with an ODE (\ref{3:ODE_f}),
where $g:\mathbb{R}^{2}\rightarrow\mathbb{R}$, such that the
solution $y$ satisfies
\begin{equation}
\left\vert y(t_{1})-b\right\vert <\gamma\label{3:Target_condition}%
\end{equation}
independent of the initial condition $y(t_{0})\in\mathbb{R}$.
\end{notation}

Let $\phi:\mathbb{R}\rightarrow\mathbb{R}_{0}^{+}$ be some function satisfying
$\int_{t_{0}}^{t_{1}}\phi(t)dt>0$ and consider the following ODE%
\begin{equation}
y^{\prime}=c(b-y)^{3}\phi(t) \label{3:ODE_target}%
\end{equation}
where $c>0$. We note that $\mathbb{R}_{0}^{+}=[0, \infty)$ is contained in the
interval of existence of any solution to (\ref{3:ODE_target}), regardless of
the initial states. There are two cases to consider:\ (i) $y(t_{0})=b$, (ii)
$y(t_{0})\neq b$. In the first case, the solution is given by $y(t)=b$ for all
$t\in\mathbb{R}$ and (\ref{3:Target_condition}) is trivially satisfied. For
the second case, note that (\ref{3:ODE_target}) is a separable equation, which
can be explicitly solved as follows:%
\begin{align}
\frac{1}{(b-y(t_{1}))^{2}}-\frac{1}{(b-y(t_{0}))^{2}}  &  =2c\int_{t_{0}%
}^{t_{1}}\phi(t)dt\text{ \ \ }\Longrightarrow\nonumber\\
\frac{1}{2c\int_{t_{0}}^{t_{1}}\phi(t)dt}  &  >(b-y(t_{1}))^{2}
\label{Eq:target}%
\end{align}
Hence, (\ref{3:Target_condition}) is satisfied if $c$ satisfies $\gamma
^{2}\geq(2c\int_{t_{0}}^{t_{1}}\phi(t)dt)^{-1}$ i.e.,~if%
\begin{equation}
c\geq\frac{1}{2\gamma^{2}\int_{t_{0}}^{t_{1}}\phi(t)dt} \label{3:Target_Def_c}%
\end{equation}

Notice that, in the construction above, there is an approximation
error $\gamma$ when approaching the target. This error can be
removed using the function $r$ defined in the following
construction.

\begin{notation}
\label{Construction_r} We obtain an IVP defined with an ODE
(\ref{3:ODE_f}), where $g:\mathbb{R}^{2}\rightarrow\mathbb{R}$, such
that the solution $r$ satisfies the condition below (cf.~Fig.~\ref{fig:functionR}):%
\begin{equation}
r(x)=j\text{ \ \ \ whenever }x\in\lbrack j-1/4,j+1/4]\text{ for all }%
j\in\mathbb{Z}\text{} \label{3:Property_r}%
\end{equation}

\end{notation}

This particular function $r:\mathbb{R}\rightarrow\mathbb{R}$ is needed for the
following reason. Suppose that, in Construction \ref{Construction_Target},
$0<\gamma<1/4$ and $b\in\mathbb{N}$. Then $r(y(t_{1}))=b$, i.e.,~$r$ corrects
the error present in $y(t_{1})$ when approaching an integer value $b$. 
\begin{figure}[ptb]
\begin{center}
\includegraphics[width=0.6\textwidth]{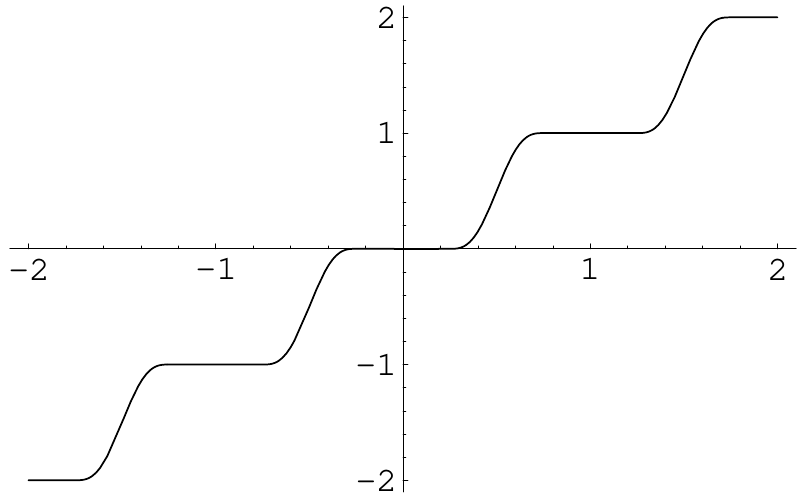}
\end{center}
\caption{Graphical representation of the function $r$.}
\label{fig:functionR}
\end{figure}%

Now the construction. First let $\theta_{k}:\mathbb{R}\rightarrow\mathbb{R}$,
$k\in\mathbb{N}-\{0,1\}$, be the function defined by
\[
\theta_{k}(x)=0\text{ if }x\leq0,\text{ \ \ }\theta_{k}(x)=x^{k}\text{ if
}x>0
\]
For $k=\infty$ define%
\[
\theta_{k}(x)=0\text{ if }x\leq0,\text{ \ \ }\theta_{k}(x)=e^{-\frac{1}{x}%
}\text{ if }x>0
\]
These functions can be seen \cite{CMC00} as a $C^{k-1}$ version of Heaviside's
step function $\theta(x),$ where $\theta(x)=1$ for $x\geq0$ and $\theta(x)=0$
for $x<0$.

With the help of $\theta_{k}$, we define a \textquotedblleft step
function\textquotedblright\ $s:\mathbb{R}\rightarrow\mathbb{R}$, which matches
the identity function on the integers, as follows:%
\[
\left\{
\begin{array}
[c]{l}%
s^{\prime}(x)=\lambda_{k}\theta_{k}(-\sin2\pi x)\\
s(0)=0
\end{array}
\right.
\]
where
\[
\lambda_{k}=\frac{1}{\int_{1/2}^{1}\theta_{k}(-\sin2\pi x)dx}>0
\]
For $x\in\lbrack0,1/2]$, $s(x)=0$ since $\sin2\pi x\geq0$. On
$(1/2,1)$, $s$ strictly increases and satisfies $s(1)=1$. Using the
same argument for $x\in\lbrack j,j+1]$, for all integer $j$, we
conclude that $s(x)=j$ whenever $x\in\lbrack j,j+1/2]$. Then let
$r:\mathbb{N}\rightarrow\mathbb{N}$, $r(x)=s(x+1/4)$. It is easy to
see that $r$ satisfies the condition (\ref{3:Property_r}). Formally,
we should note that, for each $k\in
\mathbb{N\cup\{\infty\}}-\{0,1\}$, we get a different function $r$,
but they all have the same fundamental property
(\ref{3:Property_r}). So we choose to omit the reference to the
index $k$ when defining $r$ (this won't present any problems in
later results).

Now that we can reach a target and remove approximation errors, we can iterate
maps with ODEs.

\begin{notation}
\label{Construction_Branicky}Iterate a map $f_{M}:\mathbb{N}\rightarrow
\mathbb{N}$ with a smooth ODE (\ref{3:ODE_f}).
\end{notation}

Let $f:\mathbb{R}\rightarrow\mathbb{R}$ be an arbitrary smooth extension of
$f_{M}$, and consider the IVP defined with the smooth ODE%
\begin{equation}
\left\{
\begin{array}
[c]{l}%
z_{1}^{\prime}=c_{j,1}(f(r(z_{2}))-z_{1})^{3}\theta_{j}(\sin2\pi t)\\
z_{2}^{\prime}=c_{j,2}(r(z_{1})-z_{2})^{3}\theta_{j}(-\sin2\pi t)
\end{array}
\right.  \label{3:Branicky}%
\end{equation}
and the initial conditions%
\[
\left\{
\begin{array}
[c]{c}%
z_{1}(0)=x_{0}\\
z_{2}(0)=x_{0}%
\end{array}
\right.
\]
where $x_{0}\in\mathbb{N}$. Roughly speaking, the special feature
concerning system (\ref{3:Branicky}) is that one component is
\textquotedblleft active,\textquotedblright while the other is
\textquotedblleft dormant\textquotedblright\ and serves as a memory.
Succinctly, in the time interval $[0,1/2]$, $z_{2}(t)$ is constant
and takes the value $x_{0}$, while the component $z_{1}$ is being
updated to the value  $f_{M}(x_{0})$. In the interval $[1/2,1]$, the
behaviors of $z_{1}$ and $z_{2}$ are switched: $z_{2}$ becomes
\textquotedblleft active\textquotedblright, while $z_{1}$ is
\textquotedblleft dormant\textquotedblright\ (taking value $\simeq
f_{M}(x_{0})$). In this interval, $z_{2}$ will approach the value of
$z_{1}$, i.e.\ $f_{M}(x_{0})$. Therefore, at time $t=1$,
$z_{1}(t)\simeq f_{M}(x_{0})$ and $z_{2}(t)\simeq f_{M}(x_{0})$.
Then the procedure is repeated for the interval $[1,2]$ resulting in
$z_{1}(2)\simeq f_{M}^{[2]}(x_{0})$ and $z_{2}(2)\simeq
f_{M}^{[2]}(x_{0})$. Repeating the process, one obtains
$z_{1}(j)\simeq f_{M}^{[j]}(x_{0})$ and $z_{2}(j)\simeq
f_{M}^{[j]}(x_{0})$ for all $j\in\mathbb{N}$
(cf.~Fig.~\ref{fig:branicky}, where $y_{A}=z_{1}$ and
$y_{B}=z_{2}$).

Let us look at this in more detail. First, we select the parameters
used in Construction \ref{Construction_Target} as follows:
$\gamma\leq1/4$, $t_{0}=0$, $t_{1}=1/2$, $\phi=\phi_{1}$, where
$\phi_{1}(t)=\theta_{j}(\sin2\pi t)$, and $c_{j,1}=c$ given by
(\ref{3:Target_Def_c}). Using these parameters in
(\ref{3:Branicky}), $z_{2}^{\prime}(t)=0$ for $t\in\lbrack0,1/2]$;
thus the
first equation of (\ref{3:Branicky})\ becomes%
\[
z_{1}^{\prime}=c(b-z_{1})^{3}\phi(t)
\]
where $b=f(x_0)=f_{M}(x_{0})$. It follows from Construction \ref{Construction_Target}
that $\left\vert z_{1}(1/2)-f_{M}(x_{0})\right\vert <\gamma\leq1/4$. Next, for
$t\in\lbrack1/2,1]$, $z_{1}^{\prime}(t)=0$, and Construction
\ref{Construction_r} ensures that $r(z_{1}(t))=f_{M}(x_{0})$ ($z_{1}$
\textquotedblleft remembers\textquotedblright\ the value of $f_{M}(x_{0})$ for
$t\in\lbrack1/2,1]$). If we make use of Construction \ref{Construction_Target}
again with the new set of parameters: $t_{0}=1/2$, $t_{1}=1$,
$\phi(t)=\phi_{2}(t)=\theta_{j}(-\sin2\pi t)$, and $c_{j,2}=c$ given by (\ref{3:Target_Def_c}),
then the second equation of (\ref{3:Branicky})\ becomes%
\[
z_{2}^{\prime}=c(b-z_{2})^{3}\phi(t)
\]
where $b=f_{M}(x_{0})$. Hence, one has $\left\vert z_{2}(1)-f_{M}%
(x_{0})\right\vert <\gamma\leq1/4$, which implies that $r(z_{2}(1))=f_{M}%
(x_{0})$. We now continue to the time interval $[1, 2]$. For $t\in
\lbrack1,3/2]$, $z_{2}^{\prime}(t)=0$, and Construction \ref{Construction_r}
ensures that $f(r(z_{2}(3/2)))=f_{M}^{[2]}(x_{0})$. Since both $\sin2\pi t$
and $-\sin2\pi t$ are periodic with period one, it follows that the above
procedure can be repeated for time intervals $[j,j+1]$,
$j\in\mathbb{N}$ (cf.~Fig.~\ref{fig:branicky}). Moreover, one has, for any
given $x_{0}\in\mathbb{N}$,%
\[
r(z_{2}(t))=f_{M}^{[j]}(x_{0})\text{ whenever }t\in\lbrack j,j+1/2]
\]
for all $j\in\mathbb{N}$. In this sense, (\ref{3:Branicky})
simulates the iteration of the function
$f_{M}:\mathbb{N}\rightarrow\mathbb{N}$. Since $f$ is an extension
of $f_{M}$, we have $f^{[j]}(x_{0})=f_{M}^{[j]}(x_{0})$ for any
$x_{0}\in\mathbb{N}$. For this reason, we also say that
(\ref{3:Branicky}) simulates the iteration of $f$ (over
$\mathbb{N}$). In the case where $f_{M}$ is the transition function
of a Turing machine $M$, the ODE system (\ref{3:Branicky}) is called
a simulation of the machine $M$.

The construction can be easily extended to the more general case
$h:\mathbb{N}^{k}\rightarrow\mathbb{N}^{k}$ for $k\geq1$. We then obtain an
ODE with $2k$ equations, where each component $h_{1},\ldots,h_{k}$ of $h$ is
simulated by a pair of equations.

\subsection{Simulations of Turing machines with ODEs - analytic case\label{Sec:Branickyanalytic}}

The previous section shows how to iterate a map from $\mathbb{N}$ to
$\mathbb{N}$ with smooth (but non-analytic) ODEs. In particular, we
can iterate the transition function of a given TM with some smooth
ODE.  However, the ODE built in Construction
\ref{Construction_Branicky} to iterate the function $f$ is not
analytic, even if the function $f$ itself is analytic, for the
construction uses the non-analytic functions $\theta_{j}$ in a
crucial way. Worse still, it is believed that, in general, one
cannot iterate analytic functions with analytic ODEs. However, if an
analytic function $f$ satisfies the conditions set in Theorem
\ref{Th:simulation}, in particular Condition (\ref{Eq:Th_main}),
then we show in this subsection that $f$ can be iterated by an
analytic ODE. \smallskip

The main idea underlying the construction goes as
follows. 
If we want to iterate the transition function
$f_{M}:\mathbb{N}^3\to\mathbb{N}^3$ of a Turing machine $M$ with
analytic ODEs by using a system similar to (\ref{3:Branicky}), we
cannot allow $z_{1}^{\prime}$ and $z_{2}^{\prime}$ to be $0$ in
half-unit time intervals. Instead, we allow them to be very close to
zero, which will add some errors to
system (\ref{3:Branicky}). 
In general situations, this error may cause the trajectory of the
ODE starting at some $x_{0}\in\mathbb{N}^3$ to depart from
$f_{M}^{\lbrack k\rbrack}(x_{0})$ in the long run, and thus
disqualify the ODE as an iterator of $f_{M}$. However, since $f$,
the extension of $f_{M}$, satisfies (\ref{Th:simulation}), it
simulates $M$ robustly in the presence of errors.
This allows
us to repeat the process arbitrarily many times and still maintain
$z_{1}(j)\simeq f^{[j]}(x_{0})$ for all $j\in\mathbb{N}$. Let us re-analyze
the constructions of Section \ref{Sec:Branickyinfty} from this new
perspective.\smallskip

\noindent \textbf{Convention.} From now on we fix an analytic
function $f:\mathbb{R}^3\to\mathbb{R}^3$ that satisfies Theorem
\ref{Th:simulation} and simulates a universal Turing machine $M$
(thus the restriction of $f$ on $\mathbb{N}^3$ is the transition
function $f_{M}$ of $M$). We note that the function $f$ also
satisfies Theorem \ref{Th:Main_discrete}. \smallskip

\noindent\textbf{Studying the perturbed targeting equation}%
\index{targeting equation!perturbed} (cf.~Construction
\ref{Construction_Target}). Because the iteration procedure relies
on the basic ODE (\ref{3:ODE_target}), we need to study the
following perturbed version of (\ref{3:ODE_target})
\begin{equation}
z^{\prime}=c(\overline{b}(t)-z)^{3}\phi(t) \label{3:Perturbed_target}%
\end{equation}
where $\left\vert \overline{b}(t)-b\right\vert \leq\rho$. This
\textquotedblleft perturbed\textquotedblright\ $\overline{b}(t)$
accounts for the possibility that $z_{1}(t)$ and $z_{2}(t)$ may not
be fixed in the half-unit time interval where they should be
\textquotedblleft dormant.\textquotedblright\ As in
(\ref{3:Branicky}), we take the departure time\ to be $t_{0}=0$, the
arrival time to be $t_{1}=1/2$, and $\phi
:\mathbb{R}\to\mathbb{R}_{0}^{+}$ satisfying
$\int_{0}^{1/2}\phi(t)dt>0$, where $c$ satisfies
(\ref{3:Target_Def_c}) and $\gamma>0$ is the targeting error without
perturbation, that is, $|z(1/2)-b|<\gamma$ if
$z$ is the solution to the \textquotedblleft unperturbed\textquotedblright%
\ equation $z^{\prime}=c(b-z)^{3}\phi(t)$. Let $\overline{z}$ be the solution
of this new ODE (\ref{3:Perturbed_target}) with initial condition
$\overline{z}(0)=\overline{z}_{0}$ and let $z_{+},z_{-}$ be the solutions of
$z^{\prime}=c(b+\rho-z)^{3}\phi(t)$ and $z^{\prime}=c(b-\rho-z)^{3}\phi(t)$,
respectively, with initial conditions $z_{+}(0)=z_{-}(0)=\overline{z}_{0}$.\
Since $\phi(t)$ is a nonnegative function, it is clear that, for all
$(t, z)\in\mathbb{R}^{2}$, the following holds:
\begin{equation}
c(b-\rho-z)^{3}\phi(t)\leq c(\bar{b}(t)-z)^{3}\phi(t)\leq c(b+\rho-z)^{3}%
\phi(t) \label{3:Defining_fences}%
\end{equation}
From (\ref{3:Defining_fences}) and a standard differential
inequality from the basic theory of ODEs (see e.g.~\cite[Appendix
T]{HW95}), it follows that $z_{-}(t)\leq\overline{z}(t)\leq
z_{+}(t)$ for all $t\in\mathbb{R}_{0}^{+}$. So if we have an upper
bound on $z_{+}$ and a lower bound on $z_{-}$, we immediately get
bounds for $\overline{z}$.

With $\gamma$ being the targeting error, we have the following estimates from
Construction \ref{Construction_Target}:
\[
|b+\rho-z_{+}(1/2)|<\gamma\ \text{and}\ |b-\rho-z_{-}(1/2)|<\gamma
\]
which in turn implies that
\[
b-\rho-\gamma<z_{-}(1/2)\leq\bar{z}(1/2)\leq z_{+}(1/2)<b+\rho+\gamma
\]
or equivalently%
\begin{equation}
\left\vert \overline{z}(1/2)-b\right\vert <\rho+\gamma
\label{4:Error_target_perturbed}%
\end{equation}
In other words, the targeting error in the presence of perturbation
is at most $\rho+\gamma$, if the target value is perturbed at most
up to an amount $\rho$. \medskip

\noindent\textbf{Removing }$\theta_{j}$ \textbf{ from (\ref{3:Branicky}%
).}%
\index{function!thetaj@$\theta_{j}$}
We must remove the function $\theta_{j}$ from the right-hand side of (\ref{3:Branicky})
as well as in the function $r$ (see Construction \ref{Construction_r}).
Since $f$ is robust to perturbations in the sense that $f$ satisfies
Condition (\ref{Eq:Th_main}) of Theorem \ref{Th:simulation}, we no
longer need the corrections performed by $r$. (For simplicity we
assume that $f:\mathbb{R}\to\mathbb{R}$ instead of
$f:\mathbb{R}^3\to\mathbb{R}^3$.)
On the other hand, we cannot simply drop the functions
$\theta_{j}(\pm\sin2\pi t)$ in (\ref{3:Branicky}). We need to
replace $\phi(t)=\theta_{j}(\sin2\pi t)$ by an analytic function
$\zeta:\mathbb{R}\rightarrow\mathbb{R}$ with the following ideal
behavior:\smallskip

(i) $\zeta$ has period $1$;

(ii) $\zeta(t)=0$ for $t\in\lbrack1/2,1]$;

(iii) $\zeta(t)\geq0$ for $t\in\lbrack0,1/2]$ \ and $\int_{0}^{1/2}%
\zeta(t)dt>0$.\smallskip

Of course, Conditions (ii) and (iii) are incompatible for analytic
functions (it is well-known that if a real analytic function is 0 in
a non-empty open interval, then it must be 0 everywhere). Instead,
we approach $\zeta$ using a function $\zeta_{\epsilon}$,
$\epsilon>0$. This function should satisfy the following
conditions:\smallskip

(ii${}^{\prime}$) $\left\vert \zeta_{\epsilon}(t)\right\vert \leq\epsilon$ for
$t\in\lbrack1/2,1]$;

(iii${}^{\prime}$) $\zeta_{\epsilon}(t)\geq0$ for $t\in\lbrack0,1/2]$ \ and
$\int_{0}^{1/2}\zeta_{\epsilon}(t)dt>I>0$, where $I$ is independent of
$\epsilon$.\smallskip

Our idea to define such a function $\zeta_{\epsilon}$ is to make use of the
function $l_{2}$ introduced in Proposition \ref{Prop:l2}: Let%
\begin{equation}
\zeta_{\epsilon}(t)=l_{2}(\vartheta(t),1/\epsilon) \label{3:Zeta_Epsilon}%
\end{equation}
where $\epsilon>0$ is the precision up to which $\zeta_{\epsilon}$ should
approximate $0$ in the interval $[1/2,1]$ and $\vartheta:\mathbb{R}%
\rightarrow\mathbb{R}$ is an analytic periodic function of period $1$
satisfying the following conditions:\smallskip

(a) $\left\vert \vartheta(t)\right\vert \leq1/4$ for $t\in\lbrack1/2,1]$;

(b) $\vartheta(t)\geq3/4$ for $t\in(a,b)\subseteq(0,1/2)$, $a<b$.\smallskip

We note that Proposition \ref{Prop:l2} and Condition (a) ensure that
$\left\vert \zeta_{\epsilon}(t)\right\vert <\epsilon$ for
$t\in\lbrack1/2,1]$, and thus ensures (ii${}^{\prime}$), while
Proposition \ref{Prop:l2} and Condition (b) guarantee that
$\left\vert \zeta_{\epsilon}(t)\right\vert >1-\epsilon$ for
$t\in(a,b)$, which in turn implies
$\int_{0}^{1/2}\zeta_{\epsilon}(t)\geq(1-\epsilon )(b-a)>3(b-a)/4$
for $\epsilon<1/4$, thus (iii${}^{\prime}$) is satisfied. We note
that, for all $(t,x)\in\mathbb{R}^{2}$, $l_{2}(t,x)>0$ and thus
$\zeta_{\epsilon}(t)\geq0$ for all $t\in\mathbb{R}$. It is not
difficult to see that one may select
$\vartheta:\mathbb{R}\rightarrow\mathbb{R}$ as %
\begin{equation}
\vartheta(t)=\frac{1}{2}(\sin^{2}(2\pi t)+\sin(2\pi t))
\label{3:Periodic_function_for_l2}%
\end{equation}
since this function satisfies both Conditions (a) and (b)
(e.g.~$a=0.16$ and $b=0.34$). Now we make the replacement: replace
$\theta_{j}(\sin2\pi t)$ by the analytic function
$\zeta_{\epsilon}(t)=l_{2}(\vartheta(t),1/\epsilon)$, where
$\vartheta$ is given by (\ref{3:Periodic_function_for_l2}).
Similarly, we replace $\theta_{j}(-\sin2\pi t)$ by the analytic
function $\zeta _{\epsilon}(-t)$. For this particular function
$\vartheta$ defined by (\ref{3:Periodic_function_for_l2}), the
constant satisfying (\ref{3:Target_Def_c}) may be selected as
\[
c\geq\frac{1}{2\gamma^{2}\frac{3(0.34-0.16)}{4}}%
\]
which is independent of $\epsilon$.\medskip

\noindent\textbf{Performing Construction
\ref{Construction_Branicky}\ with analytic functions.} We are now
ready to perform a simulation of the Turing machine $M$  (or an iteration of the transition
function $f_{M}:\mathbb{N}^3\to\mathbb{N}^3$ of $M$)
with a system similar to (\ref{3:Branicky}), but using only analytic
functions. For readability, let us assume for now that
$f_{M}:\mathbb{N}\to\mathbb{N}$ and $f:\mathbb{R}\to\mathbb{R}$
(instead of $f_{M}:\mathbb{N}^3\to\mathbb{N}^3$ and $f:\mathbb{R}^3\to\mathbb{R}^3$). 
Choose a targeting error $\gamma>0$ and suppose
$\varepsilon>0$ is the error resulting from perturbation such that
\begin{equation}
2\gamma\leq\varepsilon<1/8, \label{3:Targeting_error_perturbed_Branicky}%
\end{equation}
and consider the following system of ODEs
\begin{equation}
\left\{
\begin{array}
[c]{l}%
z_{1}^{\prime}=c_{1}(f\circ\sigma^{\lbrack k]}(z_{2})-z_{1})^{3}%
\,\zeta_{\epsilon_{1}}(t)\\
z_{2}^{\prime}=c_{2}(\sigma^{\lbrack n]}(z_{1})-z_{2})^{3}\,\zeta
_{\epsilon_{2}}(-t)
\end{array}
\right.  \label{3:Branicky_analytic}%
\end{equation}
with initial conditions $z_{1}(0)=z_{2}(0)=\overline{x}_{0}$, where
$\left\vert x_{0}-\overline{x}_{0}\right\vert \leq\varepsilon$,
$x_{0}\in\mathbb{N}$, and $\sigma$ is the error-contracting function
defined in (\ref{3:sigma}). The four constants $c_{1}, c_{2}, k$,
and $n$ and the two functions $\epsilon_{1}$ and $\epsilon_{2}$
remain to be defined.

We would like for (\ref{3:Branicky_analytic}) to satisfy the
following property: on $[0,1/2]$,
\begin{equation}
\left\vert z_{2}^{\prime}(t)\right\vert \leq\gamma\label{3:Bound_z_2}%
\end{equation}
This can be achieved by setting $\epsilon_{2}(t)=\gamma/K(t)$, where
$K(t)=c_{2}^{4/3}(\sigma^{[n]}(z_{1})-z_{2})^{4}+1$. We note that $|x|^{3}\leq
x^{4}+1$ for all $x\in\mathbb{R}$ and $\vartheta(-t)<\frac{1}{4}$ for all
$t\in[n, n+\frac{1}{2}]$, $n\in\mathbb{N}$. Thus, from Proposition
\ref{Prop:l2},
\[
0\leq\zeta_{\epsilon_{2}}(-t)=\zeta_{\epsilon_{2}(t)}(-t)=l_{2}(\vartheta(-t),
1/\epsilon_{2}(t))\leq\epsilon_{2}(t), \, t\in[n, n+\frac{1}{2}], \,
n\in\mathbb{N}%
\]
which further implies that, for all $n\in\mathbb{N}$ and $t\in[n,n+\frac{1}%
{2}]$,
\begin{align*}
|z^{\prime}_{2}(t)|  &  =  |c_{2}(\sigma^{[n]}(z_{1})-z_{2})^{3}%
\zeta_{\epsilon_{2}}(-t)|\\
&  \leq |c_{2}(\sigma^{[n]}(z_{1})-z_{2})^{3}|\cdot|\epsilon_{2}(t)|\\
&  =  |c_{2}(\sigma^{[n]}(z_{1})-z_{2})^{3}|\frac{\gamma}{c_{2}^{4/3}%
(\sigma^{[n]}(z_{1})-z_{2})^{4}+1}\\
&  <  \gamma
\end{align*}
Similarly, if we set
$\epsilon_{1}(t)=\gamma/[c_{1}^{4/3}(f\circ\sigma
^{[n]}(z_{2})-z_{1})^{4}+1]$, then we have
$|z_{1}^{\prime}(t)|<\gamma$ for all $t\in[n+\frac{1}{2}, n+1]$,
$n\in\mathbb{N}$. We still need to define the two constants $c_{1}$
and $c_{2}$ which must satisfy (\ref{3:Target_Def_c}). To do so, we
first observe that $\epsilon_{2}(t)\leq\gamma<\frac{1}{16}$ (see
(\ref{3:Targeting_error_perturbed_Branicky})) for $t\in[n,
n+\frac{1}{2}]$, $n\in\mathbb{N}$. Then it follows from
(\ref{3:Periodic_function_for_l2}) and the discussions immediately
preceding and following
(\ref{3:Periodic_function_for_l2}) that%
\[
\int_{0}^{1/2}\zeta_{\epsilon_{2}(t)}(-t)dt>(1-\epsilon_{2}%
(t))(0.34-0.16)>(1-\frac{1}{16})(0.34-0.16)>0.16
\]
Thus if we choose $c_{2}$ such that
\[
c_{2}\geq\frac{1}{2\gamma^{2}(0.16)}
\]
then $c_{2}$ satisfies (\ref{3:Target_Def_c}). We choose $c_{1}$ in
a similar way. The two integers $k$ and $n$ are chosen such that
$k=n$ and $|\sigma^{[k]}(\epsilon+\frac{\gamma}{2})|<\gamma$.

Next we proceed to show that system (\ref{3:Branicky_analytic})
iterates the function $f$. First we note that, from the given
initial condition $|z_{2}(0)-x_{0}|<\epsilon$ with
$x_{0}\in\mathbb{N}$, it follows from
(\ref{3:Bound_z_2}) that for all $t\in[0, 1/2]$,%
\[
|z_{2}(t)-x_{0}|\leq|z_{2}(t)-z_{2}(0)|+|z_{2}(0)-x_{0}|<\gamma t+\epsilon
\leq\frac{\gamma}{2}+\epsilon
\]
Then from our choice of $k$, $|\sigma^{[k]}(z_{2}(t))-x_{0}|<\gamma$ for all
$t\in[0, 1/2]$, which further implies that $|f\circ\sigma^{[k]}(z_{2}%
(t))-f(x_{0})|<\lambda\gamma<\gamma$, $t\in[0, 1/2]$ (see Theorem
\ref{Th:simulation}). Now from the study of the perturbed targeting equation
(\ref{3:Perturbed_target}), if we let $\phi(t)=\zeta_{\epsilon_{1}}(t)$ and
the perturbation error $\rho=\gamma$ in (\ref{3:Perturbed_target}), then we
reach the conclusion that the solution $z_{1}$ exists on $[0, 1/2]$ and
\begin{equation}
\label{3:z1_after_time_1/2}|z_{1}(1/2)-f(x_{0})|<2\gamma\leq\epsilon
\end{equation}
Proceeding to the next half-time interval $[1/2, 1]$, the roles of
$z_{1}$ and $z_{2}$ are switched. On $[1/2, 1]$,
$|z^{\prime}_{1}(t)|\leq\gamma$. Then it follows from
$|z^{\prime}_{1}(t)|\leq\gamma$ and (\ref{3:z1_after_time_1/2}) that
\[
\mbox{$|z_{1}(t)-f(x_{0})|\leq \epsilon +\gamma/2\leq 1/4$ for
all  $t\in [1/2, 1]$}
\]
Thus from the choice of $n$, $|\sigma^{[n]}(z_{1}(t))-f(x_{0})|<\gamma$ for
all $t\in[1/2, 1]$. Again, by making use of the perturbed targeting equation
(\ref{3:Perturbed_target}) with $\phi(t)=\zeta_{\epsilon_{1}}(t)$, we obtain
\[
|z_{2}(1)-f(x_{0})|<2\gamma\leq\epsilon
\]

Repeating the above procedure for intervals $[1,2]$, $[2,3]$, etc., we
conclude that for all $j\in\mathbb{N}$ and for all $t\in\lbrack j,j+1/2]$,
\begin{equation}
\left\vert z_{1}(t)-f^{[j]}(x_{0})\right\vert \leq\epsilon+\frac{\gamma}%
{2}\leq1/4 \label{Eq:eta}%
\end{equation}
Moreover, $z_{1}$ is defined as the solution of an analytic ODE.

From the construction above, it is easy to see that the following
ODE system (\ref{3:Branicky_analytic_full}) iterates
$f_{M}:\mathbb{N}^3\to\mathbb{N}^3$:
\begin{equation}
\left\{
\begin{array}
[c]{l}%
y_{1}^{\prime}=c_{1}(f_{1}(\sigma^{\lbrack k]}(v_{1}), \sigma^{\lbrack k]}(v_{2}), \sigma^{\lbrack k]}(v_{3}))-y_{1})^{3}%
\,\zeta_{\epsilon_{1}}(t)\\
y_{2}^{\prime}=c_{1}(f_{2}(\sigma^{\lbrack k]}(v_{1}), \sigma^{\lbrack k]}(v_{2}), \sigma^{\lbrack k]}(v_{3}))-y_{2})^{3}%
\,\zeta_{\epsilon_{1}}(t)\\
y_{3}^{\prime}=c_{1}(f_{3}(\sigma^{\lbrack k]}(v_{1}), \sigma^{\lbrack k]}(v_{2}), \sigma^{\lbrack k]}(v_{3}))-y_{3})^{3}%
\,\zeta_{\epsilon_{1}}(t)\\
v_{1}^{\prime}=c_{2}(\sigma^{\lbrack n]}(y_{1})-v_{1})^{3}\,\zeta
_{\epsilon_{2}}(-t)  \\
v_{2}^{\prime}=c_{2}(\sigma^{\lbrack n]}(y_{2})-v_{2})^{3}\,\zeta
_{\epsilon_{2}}(-t) \\
v_{3}^{\prime}=c_{2}(\sigma^{\lbrack n]}(y_{3})-v_{3})^{3}\,\zeta
_{\epsilon_{2}}(-t)
\end{array}
\right.  \label{3:Branicky_analytic_full}%
\end{equation}
where $(y_1, y_2, y_3), (v_1, v_2, v_3)\in\mathbb{R}^3$ and
$f=(f_{1}, f_{2},f_{3})$, $f_i:\mathbb{R}^3\to\mathbb{R}$, $1\leq i\leq 3$.

We need to build several new systems similar to
(\ref{3:Branicky_analytic_full}) but with $\zeta _{\epsilon_{1}}$
and $\zeta _{\epsilon_{2}}$ replaced by other functions.  For
simplicity, we will not present their fully expanded forms as in
(\ref{3:Branicky_analytic_full}), instead we will use systems
similar to (\ref{3:Branicky_analytic}) to represent the
corresponding fully expanded systems.  
When doing so, the expression $(f\circ \sigma)(x)\phi(t)$ is used
for $f(\sigma(x_1)\phi(t), \sigma(x_2)\phi(t), \sigma(x_3)\phi(t))$
if $f:\mathbb{R}^3\to\mathbb{R}^3$, $\sigma,
\phi:\mathbb{R}\to\mathbb{R}$, $x\in\mathbb{R}^3$, and
$t\in\mathbb{R}$. 

\subsection{The halting configuration is a sink\label{Sec:Hyper}}

In the previous subsection we have shown how to iterate the map $f$
with an analytic ODE on $\mathbb{R}^{6}$. Since $f$ simulates the
Turing machine $M$, so does the ODE. Let us continue to assume that
the machine $M$ has a unique halting configuration. To prove Theorem
\ref{Th:Main_continuous},
we need to show that this halting configuration of $M$ corresponds
to a hyperbolic sink of the analytic ODE simulating $M$.

Note that the results of Section \ref{Sec:Branickyanalytic} do not guarantee
the existence of such a hyperbolic sink. What is shown there is that if
$x_{0}\in\mathbb{N}^{3}$ encodes an initial configuration of $M$ and $x_{h}%
\in\mathbb{N}^{3}$ corresponds to the halting configuration of $M$, then $M$ halts on $x_{0}$ iff%
\[
\left\Vert z(0)-(x_{0},x_{0})\right\Vert \leq1/8\text{ \ \
}\Longrightarrow \text{ \ \ }\exists k_{0}\in\mathbb{N} \text{ such
that } \forall t\geq k_{0},\text{ }\left\Vert
z(t)-(x_{h},x_{h})\right\Vert \leq1/4
\]
where $z$ is the solution of (\ref{3:Branicky_analytic}) with the
initial condition $z(0)\in\mathbb{R}^6$. In other words, we know
that, provided that $M$ halts on $x_{0}$, a trajectory starting
sufficiently close to $(x_{0},x_{0})$ will eventually be in the ball
$B((x_{h},x_{h}),1/4)$, but it may just wander there and never
converge to $(x_{h},x_{h})$. Since we need to make $(x_{h},x_{h})$ a
sink (hyperbolicity will be dealt with in the next subsection), we
have to modify system (\ref{3:Branicky_analytic}).
There are several issues to be addressed.

$\bullet$ In general, an ODE (\ref{3:ODE_f}) describes a dynamical system if it is
autonomous, i.e.\ if $g$ in (\ref{3:ODE_f}) does not depend on $t$ (see
e.g.\ \cite{HS74}), which is not the case for system (\ref{3:Branicky_analytic}%
). It is true that (\ref{3:ODE_f}) can be converted into an autonomous system
by writing%
\[
\left\{
\begin{array}
[c]{l}%
y^{\prime}=g(z,y)\\
z^{\prime}=1
\end{array}
\right.
\]
But in a system like this, if it has a hyperbolic sink $\alpha$, the
\textquotedblleft time\textquotedblright\ variable $z$ must be
finite at $\alpha$. Such a system cannot simulate a universal Turing
machine, for the set of halting times of a universal Turing machine
is not bounded. To deal with this problem, an intuitive fix is to
introduce a new variable $u=e^{-t}$. Then $u^{\prime}=-u$ has an
equilibrium point at $u=0$ and $u$ converges exponentially fast to
$0$ as $t\rightarrow\infty$. The analytic ODE
(\ref{3:Branicky_analytic}) can then be rewritten as follows:
\begin{equation}
\left\{
\begin{array}
[c]{l}%
z_{1}^{\prime}=c_{1}(f\circ\sigma^{\lbrack k]}(z_{2})-z_{1})^{3}%
\zeta_{\epsilon_{1}}(-\ln u)\\
z_{2}^{\prime}=c_{2}(\sigma^{\lbrack n]}(z_{1})-z_{2})^{3}\,\zeta
_{\epsilon_{2}}(\ln u)\\
u^{\prime}=-u
\end{array}
\right.  \,\label{Eq:system1}%
\end{equation}
It is clear that the new system (\ref{Eq:system1}) still simulates
the machine $M$ as system (\ref{3:Branicky_analytic}) does. However,
a new problem occurs with the introduction of $u$: system
(\ref{Eq:system1}) has no fixed point, let alone a sink. The problem
is obvious: the system is not defined at the only possible
equilibrium point $(x_{h},x_{h},0)$ since $\zeta_{\epsilon_{1}}(-\ln
u)$ and $\zeta_{\epsilon_{2}}(\ln u)$ are not defined at $u=0$.
Moreover, the problem cannot be easily solved by extending
$\zeta_{\epsilon_{1}}(-\ln u)$ and $\zeta_{\epsilon_{2}}(\ln u)$ as
$u\rightarrow0^{+}$, for both $\zeta_{\epsilon_{1}}$ and $\zeta_{\epsilon_{2}%
}$ are periodic functions. The strategy we use to fix this problem
is to replace $\zeta_{\epsilon_{1}}(-\ln u)$ and
$\zeta_{\epsilon_{2}}(\ln u)$ by other functions such that the
resulting system not only still simulates the machine $M$ but is
also analytic; in particular, it is analytic at $(x_{h},x_{h},0)$.
More precisely, our idea is to substitute $\zeta_{\epsilon_{1}}(-\ln
u)$ ($\zeta_{\epsilon_{2}}(\ln u)$ as well) by a function $\chi(u)$
that acts distinctively depending upon whether or not the machine
$M$ has halted: as long as $M$ has not yet halted, $\chi(u)$ behaves
roughly like $\zeta_{\epsilon_{1}}(-\ln u)$ (this ensures that the
modified system with $\chi(u)$ replacing $\zeta_{\epsilon_{1}}(-\ln
u)$ still simulates the machine $M$); once $M$ has halted, the
behavior of $\chi(u)$ is switched so that $\chi(u)$ is defined as
well as analytic at $u=0$ and the modified system converges to
$(x_{h},x_{h},0)$ hyperbolically. The switch of behavior of
$\chi(u)$ is controlled by a variable in (\ref{Eq:system1}) that
codes the current state of $M$ in the simulation. For example, one
can use the third component $v_{3}$ of $z_{2}\in\mathbb{R}^{3}$,
which codes the state of the machine $M$. If the states are given by
the values $1,\ldots,m$, with $m$ corresponding to the halting
state, then the condition $v_{3}(t)\leq m-3/4$ implies that the
machine $M$ hasn't yet halted, while the condition $v_{3}(t)\geq
m-1/4$ implies that $M$ has halted (the error of 1/4 here is due to
the perturbation error allowed in the simulation). We note that if
$M$ never halts on an input $(x_{0},x_{0})$, then system
(\ref{Eq:system1}) with the initial value $(x_{0},x_{0},1)$ will not
converge to $(x_{h},x_{h},0)$. In this case, it does not matter
whether or not the system is defined at $u=0$ (i.e.
$t=\infty$). %

$\bullet$ So we want a function $\chi(u)$ to replace
$\zeta_{\epsilon_{1}}(-\ln u)$ that has two distinct behaviors
depending upon the value of the variable $v_{3}$. For this purpose,
we add a new equation with a new variable $\tau$ to system
(\ref{Eq:system1}) which is defined by%
\begin{equation} \label{Eq:Def_s}
\tau^{\prime}=\frac{d\tau}{dt}=-\alpha u^{\alpha+1}
\end{equation}
where ideally $\alpha=-1$ before the machine $M$ halts, and
$\alpha=1$ after $M$ halts. Our aim is to make the point $(x_{h},
x_{h}, 0, 0)$ a hyperbolic sink of system (\ref{Eq:system1})
augmented with equation (\ref{Eq:Def_s}), where $x_h$ is the halting
configuration of $M$.  We observe that before $M$ halts,
$\tau^{\prime}=1$, and so $\tau(t)=t+\tau(0)$ (thus $\tau(u)=-\ln
u+\tau(0)$). After $M$ halts, $\tau^{\prime}=-u^2$; thus
$\tau$ is defined and analytic at $u=0$. Thus if we define $\chi(u)$ as follows:%
\[
\chi(u)=\zeta_{\epsilon_{1}}(\tau(u))
\]
then this $\chi(u)$ will meet our requirement of changing behavior
according to whether or not the machine $M$ has halted.
Unfortunately, there is a shortfall: defined in such a way,
$\chi(u)$ is not analytic due to the jump in $\alpha$. For this
reason, we modify the definition of $\alpha$ such that
$\alpha\simeq-1$ before the machine $M$ halts, $\alpha\simeq1$ after
$M$ halts, and $\alpha$ is analytic in $v_{3}$. Precisely, we define
\[
\alpha=\frac{5}{2}\xi_{2}(v_{3}-(m-1),10)-\frac{5}{4}%
\]
(the 10 is rather arbitrary) where $v_{3}$ is the third component of $z_{2}%
\in\mathbb{R}^{3}$ that codes the state of the machine $M$. Since
$\xi_{2}(\cdot, 10)$ is analytic, so is $\alpha$. It is not
difficult to show (see \cite{GCB08}) that for any $x<\frac{1}{4}$
and $y>0$, if $|l_{2}(x,y)|<\frac{1}{y}$, then $|\xi
_{2}(x,y)|<\frac{1}{y}$. But, as a consequence of Proposition
\ref{Prop:l2}, one can easily derive that $|l_{2}(x,y)|<\frac{1}{y}$
for any $x<\frac{1}{4}$ and $y>0$, which in turn implies that
\begin{equation}
\mbox{$|\xi_{2}(x,y)|<\frac{1}{y}$ for any $x<\frac{1}{4}$ and
$y>0$}\label{xi_two_bound}%
\end{equation}
Since we assume that states are coded by integers $1,\ldots,m$ with
$m$ the halting state, it follows that, until $M$ halts (which may
never happen), $0<\xi_{2}(v_{3}-(m-1),10)\leq1/10$. One cycle after
$M$ has halted (this ensures $v_{3}$ has enough time to update its
value to $\simeq m$), one
has $9/10<\xi_{2}(v_{3}-(m-1),10)\leq1$. Thus%
\begin{equation}
\alpha\in\left\{
\begin{array}
[c]{ll}%
\left(  -\frac{5}{4},-1\text{ }\right]  \text{\ \ \ } & \text{if }M\text{
hasn't halted}\\
\left[  1,\frac{5}{4}\right)   & \text{one cycle after }M\text{ has halted}%
\end{array}
\right.  \label{Eq:Def_alpha}%
\end{equation}

$\bullet$ If we simply define $\tau$ by (\ref{Eq:Def_s}) with
analytic $\alpha$, we will not be able to show
that system (\ref{Eq:system1}) augmented with Equation (\ref{Eq:Def_s}%
) has $(x_{h}, x_{h}, 0, 0)$ as a hyperbolic sink. This is because
we need to compute the Jacobian of the augmented system at
$(x_{h},x_{h},0,0)$, which includes the derivative of $u^{\alpha
+1}$ with respect to $v_{3}$, since this variable appears in the
expression for $\alpha$. But $\partial{u^{\alpha
+1}}/\partial{v_{3}}=u^{\alpha +1}\ln u\frac{\partial\alpha
}{\partial v_{3}}$, which is not defined at $u=0$. For this reason,
we have to modify the
definition of $\tau$ yet again, using the following equation%
\begin{equation}
\tau^{\prime}=2(u+\xi_{2}(v_{3}-(m-1),\tau+1))^{\alpha+1}\xi_{2}(m-v_{3}%
,10)-\xi_{2}(v_{3}-(m-1),10+10\tau^{2})\tau\label{Eq:u_2_derivative}%
\end{equation}
We note that the right-hand side of (\ref{Eq:u_2_derivative}) is a
function of $u$ and $\tau$. Since the derivative of this function
with respect to $v_3$ containing $\ln (u+1)$ (see the calculation
below) and the function $\xi_{2}(x, y)$ is only defined for $y>0$,
the function in the right-hand side of (\ref{Eq:u_2_derivative}) is
defined only for $u>-1$ and $\tau>-1$.

The behavior of $\tau$, defined by (\ref{Eq:u_2_derivative}), is
remarkably similar to its previous form defined in (\ref{Eq:Def_s})
as we shall see below. The parcel
$(u+\xi_{2}(v_{3}-(m-1),\tau+1))^{\alpha+1}$ in the product of the
right-hand side of (\ref{Eq:u_2_derivative}) ensures that, in the
halting configuration, the derivative of
$(u+\xi_{2}(v_{3}-(m-1),\tau+1))^{\alpha}$  with respect to $v_{3}$
is well defined, and thus fixes the problem raised above (1 is added
to $\tau$ for the reason that $\xi_{2}(x,y)$ is defined only for
$y>0$ but $\tau=0$
at $(x_{h}, x_{h}, 0, 0)$). We recall that $\xi_{2}$ is an analytic function on $\mathbb{R}%
\times\mathbb{R}^{+}$, thus $\frac{\partial\xi_{2}}{\partial v_{3}}$
and $\frac{\partial\alpha}{\partial v_{3}}$ are well-defined, in
particular at the halting configuration (where $v_3=m$). For the
parcel $(u+\xi_{2}(v_{3}-(m-1),\tau +1))^{\alpha}$, we have
\begin{align*}
&   \frac{\partial}{\partial v_{3}}\left[  (u+\xi_{2}(v_{3}-(m-1),\tau
+1))^{\alpha}\right] \\
&  =  (u+\xi_{2}(v_{3}-(m-1),\tau+1))^{\alpha}\ln(u+\xi_{2}(v_{3}%
-(m-1),\tau+1))\cdot\frac{\partial\alpha}{\partial v_{3}}\\
&   + \alpha(u+\xi_{2}(v_{3}-(m-1),\tau+1))^{\alpha-1}\frac{\partial}{\partial
v_{3}}(u+\xi_{2}(v_{3}-(m-1),\tau+1))
\end{align*}
which is also well-defined at the halting configuration, where
$u+\xi _{2}(v_{3}-(m-1),\tau+1)=u+\xi_{2}(m-(m-1),
\tau+1)=u+\xi_{2}(1, \tau+1)=u+1$ and thus
$\ln(u+\xi_{2}(v_{3}-(m-1),\tau+1))=\ln(u+1)$ is well-defined at
$u=0$. The other term ensures the \textquotedblleft switching of
behavior\textquotedblright\ according to whether or not $M$ has
halted.

$\bullet$ We now have the following modified system \\
\begin{equation}
\left\{
\begin{array}
[c]{l}%
z_{1}^{\prime}=c_{1}(f\circ\sigma^{\lbrack k]}(z_{2})-z_{1})^{3}%
\zeta_{\epsilon_{1}}(\tau)\\
z_{2}^{\prime}=c_{2}(\sigma^{\lbrack n]}(z_{1})-z_{2})^{3}\,\zeta
_{\epsilon_{2}}(-\tau)\\
u^{\prime}=-u\\
\tau^{\prime}=2(u+\xi_{2}(v_{3}-(m-1),\tau+1))^{\alpha+1}\xi_{2}(m-v_{3}%
,10)-\xi_{2}(v_{3}-(m-1),10+10\tau^{2})\tau
\end{array}
\right.  \label{Eq:New_TM2}%
\end{equation}
We note that the modified system is defined on $\mathbb{R}^6\times
(-1,+\infty)\times (-1,+\infty)$.

In the remainder of this subsection we prove that (I)
$z_{equilibrium}=(z_{f},z_{f},0,0)$ is an equilibrium point of this
new system, where \[z_{f}=(0,0,m)\]  is the halting configuration of
the machine $M$ (i.e. $z_{f}=x_{h}$); (II) For any trajectory
$(z_{1}(t), z_{2}(t), u(t), \tau(t))$ of (\ref{Eq:New_TM2}), if
there exists a $t_{0}>0$ such that $(z_{1}(t_0), z_{2}(t_0), u(t_0),
\tau(t_0))\in U_{f}$, where
\[ U_{f}=B(z_{f}, 1/8)\times B(z_{f}, 1/8)\times (-1, 2)\times (-1,
5)\]
is an open neighborhood of the equilibrium point $(z_{f},
z_{f}, 0, 0)$, then $(z_{1}(t), z_{2}(t), u(t), \tau(t))\in U_{f}$
for all $t\geq t_0$ and $(z_{1}(t), z_{2}(t), u(t),
\tau(t))\rightarrow (z_{f}, z_{f}, 0, 0)$ as $t\rightarrow \infty$.
Combining (I) and (II) we conclude that $(z_{f}, z_{f}, 0, 0)$ is a
sink of system (\ref{Eq:New_TM2}).


To show that (I) holds, that is, $(z_{f},z_{f},0,0)$ is an
equilibrium point of system (\ref{Eq:New_TM2}), it suffices to
observe that $f\circ\sigma^{\lbrack k]}(z_{f})=f(\sigma^{\lbrack
k]}(0), \sigma^{\lbrack k]}(0), \sigma^{\lbrack k]}(m))=z_{f}$
(according to our convention stated right before subsection
\ref{Sec:Hyper}) and $\xi_{2}(m-v_{3},10)=\xi_{2}(0, 10)=0$ at
$z_{f}=(0,0,m)$. The first equality follows from (\ref{3:sigma}) and
the fact that $f(z_f)=f_{M}(z_f)=z_f$ (since $f$ extends $f_{M}$,
$f_{M}$ is the transition function of $M$, and $z_{f}=(0, 0, m)$ is
the halting configuration of $M$). The second equality is derived
from (\ref{Eq:Eta2}). Thus (I) holds.

We now consider (II). We need to show that  if $(z_{1}(t_0),
z_{2}(t_0), u(t_0), \tau(t_0))\in U_{f}$ for some $t_0>0$, then
$(z_{1}(t), z_{2}(t), u(t), \tau(t))\in U_{f}$ for all $t\geq t_0$
and $(z_{1}(t), z_{2}(t), u(t), \tau(t))\rightarrow (z_{f}, z_{f},
0, 0)$ as $t\rightarrow \infty$. Since $u^{\prime}=-u$,
$u(t)=u(0)e^{-t}$. It is clear that $|u(t)|\leq |u(0)|$ for any
$t\geq 0$ and $u(t)\rightarrow 0$ as $t\rightarrow\infty$ for any
$u(0)\in (-1, 2)$ (thus any $t>0$ can be used as $t_{0}$). However,
to prove the rest of (II),  the functions $\zeta_{\epsilon_{1}}$ and
$\zeta_{\epsilon_{2}}$ in (\ref{Eq:New_TM2}) must be nonzero if
$|v_3-m|<1/4$ for technical reasons. Thus we need to refine the two
functions $\zeta_{\epsilon_{1}}$ and $\zeta_{\epsilon_{2}}$ yet
again. Recall that
$\zeta_{\epsilon}(t)=l_{2}(\vartheta(t),\frac{1}{\epsilon})$, where
$\vartheta(t)=\frac{1}{2}(\sin^{2}(2\pi t)+\sin(2\pi t))$ (see
(\ref{3:Zeta_Epsilon}) and (\ref{3:Periodic_function_for_l2})). We
redefine $\zeta_{\epsilon_{1}}$ and $\zeta_{\epsilon_{2}}$ by
choosing a different function for $\vartheta(t)$; the new
$\vartheta(t)$ is defined as follows:
\[
\vartheta(t)=\frac{1}{2}(\sin^{2}(2\pi t)+\sin(2\pi t))+l_{2}(v_{3}%
-(m-1),t+10)
\]
where, as already mentioned, $v_{3}$ is the third component of
$z_{2}\in\mathbb{R}^{3}$ that codes the state of the machine $M$.
(Again the number $10$ is rather arbitrary.) Recall that the states
of $M$ are coded by integers $1,2,\ldots,m$ with $m$ being the
halting state. For any $v_3$ satisfying $|v_3-m|<1/4$, we have
$|v_3-(m-1)-1|<1/4$, thus  it follows that
$|l_{2}(v_{3}-(m-1),t+10)-1|<\frac{1}{t+10}$ (see Proposition
\ref{Prop:l2}). Therefore, $\vartheta
(t)\geq-\frac{1}{8}+1-\frac{1}{t+10}=\frac{7}{8}-\frac{1}{t+10}\geq\frac{3}{4}$
(note that the minimum of $\frac{1}{2}(\sin^{2}(2\pi t)+\sin(2\pi
t))$ is $-\frac{1}{8}$), which implies that
$1-\vartheta(t)<\frac{1}{4}$, and it follows again from Proposition
\ref{Prop:l2} that  $|1-\zeta
_{\epsilon}|=|1-l_{2}(\vartheta(t),\frac{1}{\epsilon})|\leq\epsilon$.

Thus, assuming that $0<\epsilon_{1},\epsilon_{2}\leq1/10$ in \
$\zeta _{\epsilon_{1}}$ and $\zeta_{\epsilon_{1}}$ (if this is not
the case, for the \textquotedblleft old\textquotedblright\ precision
$\epsilon_{i}$, substitute the improved accuracy
$\epsilon_{i}/(10\epsilon_{i}+10)$, then this new precision will be
less than $\epsilon_{i}$ and also less than 1/10), we conclude that
$\zeta_{\epsilon_{1}}$ and $\zeta_{\epsilon_{2}}$, defined in
(\ref{3:Zeta_Epsilon}) but using the new function $\vartheta$ (and
perhaps a new precision $\epsilon_{1}$ or $\epsilon_{2}$), will
satisfy
\[
9/10\leq\zeta_{\epsilon_{1}}(\tau),\zeta_{\epsilon_{2}}(\tau)<1
\]
In particular%
\begin{equation}
\frac{9}{10}\leq\left.  \zeta_{\epsilon_{1}}(\tau)\right\vert
_{z_{equilibrium}},\left.  \zeta_{\epsilon_2}(\tau)\right\vert
_{z_{equilibrium}}<1\label{Eq:zetas}%
\end{equation}

We are now ready to prove the rest of (II).  Let $(z_{1}(t),
z_{2}(t), u(t), \tau(t))$ be a trajectory of (\ref{Eq:New_TM2}) such
that  $z_{1}(t),z_{2}(t)\in B(z_{f},1/8)$ for some $t_0>0$. Then
from the subsection Performing Construction
\ref{Construction_Branicky} with Analytic Functions above, it
follows that $z_{1}(t),z_{2}(t)\in B(z_{f},1/4)$ for all $t\geq
t_0$. In the following we show that, for any $0<\delta\leq 1/4$, if
$z_{1}, z_{2}\in B(z_{f}, \delta)$ for all $t\geq t_{\delta}$ for
some $t_{\delta}>0$, then there exists some $t_{\delta
/2}>t_{\delta}$ such that $z_{1}, z_{2}\in B(z_{f},
\frac{\delta}{2})$ for all $t\geq t_{\delta/2}$. As a consequence,
it is readily seen that the part of (II) concerning $z_1$ and $z_2$
is true.

Let us prove the result for $z_{1}$. The same argument applies to
$z_{2}$. Assume that $z_{1}, z_{2}\in B(z_{f},\delta)$ for all
$t\geq t_{\delta}$ for some $t_{\delta}>0$. First we recall that,
from Proposition \ref{Prop:sigma}, if $z_{2}\in
B(z_{f}, \delta)$, then $|\sigma^{[k]}(z_{2})-z_{f}|<\lambda^{k}_{1/4}%
\delta<\frac{\delta}{3}$, which further implies that
\begin{equation}\label{z-f}
|f(\sigma^{[k]}(z_{2}))-z_{f}|=|f(\sigma^{[k]}(z_{2}))-f(z_{f})|<\lambda^{k}_{1/4}
\delta<\frac{\delta}{3} %
\end{equation}
(see Theorem \ref{Th:simulation}). Let us denote $z_{1}=(y_{1},
y_{2}, y_{3})$, $z_{2}=(v_{1}, v_{2}, v_{3})$, and $f=(f_{1},
f_{2}, f_{3})$, where $y_{i}, v_{i}\in\mathbb{R}$ and $f_{i}:\mathbb{R}^{3}%
\to\mathbb{R}$, $i=1, 2, 3$. Without loss of generality we prove the
result component-wise for the first component, that is, we show that
there is a $t_{\delta/2}>t_{\delta}$ such that
$|y_{1}(t)|<\frac{\delta}{2}$ for all $t>t_{\delta/2}$ (recall that
$z_{f}=(0, 0, m)$ and thus
$|f_{1}(\sigma^{[k]}(z_{2}))|<\frac{\delta}{3}$ for all
$t>t_{\delta}$ by (\ref{z-f}) and the assumption). There are two
cases to be considered. Case 1: If there exists a time
$\tilde{t}>t_{\delta}$ such that $y_{1}(\tilde{t})\in B(0,
\delta/2)$, then we only need to show that $y_{1}(t)\in B(0,
\delta/2)$ for all $t>\tilde{t}$. For any $t>\tilde{t}$, if
$y_{1}=y_{1}(t)>\frac{\delta}{3}$, then
$f_{1}(\sigma^{[k]}(z_{2}))-y_{1}<0$ and so
$y^{\prime}_{1}=c_{1}(f_{1}(\sigma^{[k]}(z_{2}))-y_{1})^{3}
\zeta_{\epsilon_{1}}<0$ for $\zeta_{\epsilon_{1}}>\frac{9}{10}$.
Thus $y_{1}$ will be decreasing until it reaches the value
$f_{1}(\sigma ^{[k]}(z_{2}))$. In other words, for any
$t>\tilde{t}$, $y_{1}$ cannot surpass $\frac{\delta}{2}$. Similarly,
if $y_{1}=y_{1}(t)<-\frac{\delta }{3}$, then
$f_{1}(\sigma^{[k]}(z_{2}))-y_{1}>0$, which further implies that
$y^{\prime}_{1}=c_{1}(f_{1}(\sigma^{[k]}(z_{2}))-y_{1})^{3}\zeta
_{\epsilon_{1}}>0$. Therefore $y_{1}$ will be increasing until it
reaches $f_{1}(\sigma^{[k]}(z_{2}))$, which implies that $y_{1}$
cannot decrease below $-\frac{\delta}{2}$. We have now proved that
$y_{1}(t)\in B(0, \frac{\delta }{2})$ for all $t>t_{\delta/2}$ with
$t_{\delta/2}=\tilde{t}$. Case 2: If for all $t>t_{\delta}$,
$y_{1}\not \in B(0, \frac{\delta}{2})$. Without loss of generality,
let us assume that there is some $t>t_{\delta}$ such that
$y_{1}(t)>\frac{\delta}{2}$. Then, since $|f_{1}(\sigma^{[k]}(z_{2}%
))|<\frac{\delta}{3}$ for all $t>t_{\delta}$, we have $y^{\prime}_{1}%
=c_{1}(f_{1}(\sigma^{[k]}(z_{2}))-y_{1})^{3}\zeta_{\epsilon_{1}}<0$.
It follows that $y_{1}$ will be decreasing, passing the value
$\frac{\delta}{2}$, until it reaches $f_{1}(\sigma^{[k]}(z_{2}))$.
In other words, there will be a time $\tilde{t}$ such that
$y_{1}(\tilde{t})\in B(0, \frac{\delta}{2})$. Then it follows from
the first case that for any $t>\tilde{t}$, $y_{1}(t)\in B(0,
\frac{\delta}{2})$. This is a contradiction. Therefore, the second
case is invalid.

It remains to show that if there exists some $t_{0}>0$ such that
$(z_{1}(t_0), z_{2}(t_0), u(t_0), \tau(t_0))\in U_{f}$, then $\tau
(t)\in (-1, 5)$ for all $t>t_0$ and $\tau(t)\rightarrow 0$ as
$t\rightarrow\infty$. We recall that $\tau^{\prime}=2(u+\xi_{2}(v_{3}-(m-1),\tau+1))^{\alpha+1}\xi_{2}(m-v_{3}%
,10)-\xi_{2}(v_{3}-(m-1),10+10\tau^{2})\tau$. On the other hand, by
the assumption and the proof in the previous paragraph, we have
$v_{3}\rightarrow m$ as $t\rightarrow\infty$. Thus, as $t\to\infty
$, $\xi_{2}(m-v_{3}, 10)\to\xi_{2}(0, 10)=0$ and $|\xi_{2}(v_{3}%
-(m-1),10+10\tau^{2})-1|<\frac{1}{10+10\tau^{2}}$ for sufficiently
large $t$ satisfying $|(v_{3}(t)-(m-1))-1|<\frac{1}{4}$ (see
Proposition \ref{Prop:l2}), or in other words, for sufficiently
large $t$
\[
1-\frac{1}{10}\leq1-\frac{1}{10+10\tau^{2}}<\xi_{2}(v_{3}-(m-1),10+10\tau
^{2})<1+\frac{1}{10+10\tau^{2}}\leq1+\frac{1}{10}
\]
Therefore, for sufficiently large $t$,%
\[
\mbox{$\tau^{\prime}\simeq -a\tau$ for some positive number $a$}
\]
which implies that the part of (II) concerning $\tau$ is true. The
proof for (II) is then complete. (We note that the time $t_0$ in the
assumption $(z_{1}(t_0), z_{2}(t_0), u(t_0), \tau(t_0))\in U_{f}$
may be different in the previous proofs for the three parts of (II)
concerning $u$, $z_1$ and $z_2$, and $\tau$. It suffices to pick the
maximum $t_0$ among the three.)

We have thus shown that the equilibrium point $(z_{f},z_{f},0,0)$ is
indeed a sink of system (\ref{Eq:New_TM2}).  In subsection
\ref{Sec:Hyperbolic} we will show that $(z_{f},z_{f},0,0)$ is a
hyperbolic sink.

\subsection{The system (\ref{Eq:New_TM2}) still simulates the
Turing machine $M$\label{Sec:Simulate_TM}}

In this subsection we show that system (\ref{Eq:New_TM2}) with the
(fixed) initial conditions $u(0)=1$ and $\tau(0)=4$ will still
simulate the machine $M$ before $M$ halts, including the case that
$M$ never halts. We only need to study the system for $t\geq 0$. %

First we show that if $M$ hasn't yet halted, then
\begin{equation}
1<\tau^{\prime}(t)\leq5e^{\frac{5}{4}t} \label{Eq:u_2}%
\end{equation}
for all $t>0$. If $M$ hasn't yet halted, then it follows from
(\ref{Eq:Def_alpha}) that $\alpha\in\left( -\frac{5}{4},-1\text{
}\right]  $. Moreover, since $M$ has not halted, $m-v_{3}\geq3/4$,
which leads to $1-(m-v_3)\leq 1/4$, and thus
$\xi_{2}(m-v_{3},10)\in\lbrack9/10,1)$ by Proposition \ref{Prop:l2}.
Knowing that $\xi_{2}(v_{3}%
-(m-1),\tau+1)>0$, $0<\xi_{2}(x, y)<1$ for all $x\in\mathbb{R}$ and
$y>0$ (by definition of $\xi_{2}$), $u(t)=e^{-t}$ (since
$u^{\prime}=-u$ and $u(0)=1$) and $\tau>-1$, it follows from
(\ref{Eq:u_2_derivative}) that %
\begin{align*}
\tau^{\prime}  &  \leq\frac{2(u+\xi_{2}(v_{3}%
-(m-1),1+\tau))\xi_{2}(m-v_{3},10)}{(u+\xi_{2}(v_{3}%
-(m-1),1+\tau))^{-\alpha}} + \xi_{2}(v_{3}-(m-1),10+10\tau^{2})\\
&
<\frac{2(1+1)}{u^{-\alpha}}+1\leq\frac{2}{u^{\frac{5}{4}}}+1=4e^{\frac{5}{4}t}+1\leq 5e^{\frac{5}{4}t}%
\end{align*}

Next we show that $\tau^{\prime}(t)>1$ for $t>0$. It can be verified
easily that under the assumption that $\tau(0)=4$, $\tau(t)>0$ for
all $t>0$.   Then from (\ref{xi_two_bound}) it follows that
$1/\tau\geq
1/(\tau+1)\geq\xi_{2}(v_{3}-(m-1),\tau+1)>0$ and $1/\tau^{2}\geq\xi_{2}%
(v_{3}-(m-1),10+10\tau^{2})>0$. Using (\ref{Eq:u_2_derivative}) again, we get%
\begin{align*}
\tau^{\prime}  &
=\frac{2\xi_{2}(m-v_{3},10)}{(u+\xi_{2}(v_{3}-(m-1),1+\tau
))^{-\alpha-1}}-\xi_{2}(v_{3}-(m-1),10+10\tau^{2})\tau\\
&  \geq\frac{2\frac{9}{10}}{\left(  u+\frac{1}{\tau}\right)  ^{-\alpha-1}}%
-\frac{1}{\tau^{2}}\tau\geq\frac{9/5}{\left( u+\frac{1}{\tau}\right)
^{-\alpha-1}}-\frac{1}{\tau}\geq\frac{9/5}{\left(
1+\frac{1}{\tau}\right)
^{1/4}}-\frac{1}{\tau}\geq\frac{9/5}{1+\frac{1}{\tau}}-\frac{1}{\tau}%
\end{align*}
We note that the last expression in the above estimate is an
increasing function of $\tau$. Since $\tau(0)=4$, one has
\[
\tau^{\prime}(0)>\frac{9/5}{1+\frac{1}{4}}-\frac{1}{4}=\frac{36}{25}%
-\frac{1}{4}>1
\]
We conclude that $\tau^{\prime}(t)>1$ for all $t>0$.  From
(\ref{Eq:u_2}) we know that,  before $M$ halts, $\tau$
grows with $\tau^{\prime}(t)>1$, which implies that %
$\tau\rightarrow+\infty$ as $t\rightarrow+\infty$ if $M$ never
halts.

The next question  is whether the simulation of $M$ is still being
faithfully carried out by the modified system (\ref{Eq:New_TM2})
until $M$ halts. We recall that the machine $M$ is simulated by
system (\ref{3:Branicky_analytic}) in
the sense that $|z_{1}(j)-f^{[j]}(x_{0})|\leq\eta$ and $|z_{2}(j)-f^{[j]}%
(x_{0})|\leq\eta$ for some $0<\eta<1/4$ and all $j\in\mathbb{N}$
(see (\ref{3:z1_after_time_1/2}) and (\ref{Eq:eta})), where
$x_{0}\in\mathbb{N}^{3}$ codes the initial configuration. Thus the
time needed to complete a cycle (i.e. from one configuration to the
next) is one unit. Now with $\tau$ replacing $t$ as input to the
\textquotedblleft clocking functions\textquotedblright\
$\zeta_{\varepsilon_{1}}$ and $\zeta _{\varepsilon_{2}}$ in the
modified system (\ref{Eq:New_TM2}), we face a new
problem: Since $\tau$ grows at a faster rate than $t$, $1<\tau^{\prime}%
(t)\leq5e^{\frac{5}{4}t}$, the time needed to complete a cycle
becomes shorter. So we need to analyze the effect of this speeded-up
phenomenon.

As we have seen in Subsections \ref{Sec:Branickyinfty} and
\ref{Sec:Branickyanalytic}, in each iteration \textquotedblleft
cycle,\textquotedblright\thinspace\ $[j,j+1]$, of system
(\ref{3:Branicky_analytic}), one of the two variables, $z_{1}$,
$z_{2}$, is \textquotedblleft dormant\textquotedblright \ while the
other is active and is being updated within a $\frac{1}{4}$-vicinity
of the target $f^{[j]}(x_{0})$ during the time period
$[j,j+\frac{1}{2}]$; then, during the later half
$[j+\frac{1}{2},j+1]$ of the cycle, the dormant variable becomes
active and is being updated within the $\frac{1}{4}$-vicinity of the
same target while the active one stays put. The only problem that
may arise with a dormant variable is when the cycle is too long and
thus the dormant may accumulate too much error. However, since we
now have shorter cycles, this problem won't occur. The problem here
is of another type. With system (\ref{Eq:New_TM2}), the iteration
cycles no longer have the uniform length $(j+1)-j=1$, $j\in
\mathbb{N}$, as in (\ref{3:Branicky_analytic}), but become shorter
and shorter as $t\rightarrow\infty$ (possibly exponentially shorter
in the worst-case scenario) because $\tau$ moves faster than $t$
does as (\ref{Eq:u_2}) shows. Thus there may not be enough time to
update the active variable within a $\frac{1}{4}$-vicinity of the
target. Let us look at this in more detail. We note that, in system
(\ref{3:Branicky_analytic}), the reason that $z_{1}$ and $z_{2}$ can
be updated successively on all intervals $[j,j+1]$ (until $M$ halts)
is because both functions $\zeta_{\epsilon_{1}}$ and
$\zeta_{\epsilon_{2}}$, as functions of $t$, have period $1$; thus
it is possible to pick a constant $c_{1}$ such that, for all
$j\in\mathbb{N}$, the target-estimate
\[
\frac{1}{16}\geq\frac{1}{2c_{1}\int_{0}^{1/2}\zeta_{\epsilon_{1}}%
(t)dt}>(f^{[j]}(x_{0})-z_{1}(j+1/2))^{2}%
\]
holds (see (\ref{Eq:target})), which in turn implies that $|f^{[j]}%
(x_{0})-z_{1}(j+1/2)|\leq\frac{1}{4}$ for all $j$.
(Similarly one can select a constant $c_{2}$ satisfying the
target-estimate
for $z_{2}$.) Now with the speeded-up system, the half-cycles $[T_{j}%
,T_{j+1}]$ may decrease without a lower bound, and thus it becomes
impossible to pick a constant $c$ such that the left-hand side of
the target-estimate
\begin{equation}
\frac{1}{16}\geq\frac{1}{2c\int_{T_{j}}^{T_{j+1}}\zeta_{\epsilon_{1}}%
(\tau(t))dt}>(f^{[j]}(x_{0})-z_{1}(T_{j+1}))^{2}\label{target_speed}%
\end{equation}
holds for all $j$. We note that the right-hand side of the above
inequality is always true (see the derivation of (\ref{Eq:target})).
Of course, we cannot select a constant $c_{j}$ for each $j$.
Instead, we solve the problem by multiplying $\phi(t)$ by a function
$\chi(t)$ so that
\begin{equation}
\int_{t_{0}}^{t_{1}}\phi(t)\chi(t)dt\geq\int_{0}^{1/2}\phi
(t)dt\label{Eq:newterm}%
\end{equation}
where $\phi(t)$ corresponds to $\zeta_{\epsilon_{1}}$ or $\zeta_{\epsilon_{2}%
}$ in system (\ref{Eq:New_TM2}) and $[t_{0},t_{1}]$ is an arbitrary
half-cycle, $0<t_{1}-t_{0}<\frac{1}{2}$. The underlying idea here is
that, to compensate for the loss in time, we increase the magnitude
of the function
$\phi(t)$ to $\phi(t)\chi(t)$ so that the integrals $\int_{t_{0}}^{t_{1}}%
\phi(t)\chi(t)dt$ are uniformly bounded below by
$\int_{0}^{1/2}\phi(t)dt$ for all cycles; thus the target-estimate
(\ref{target_speed}) would hold for all cycles with the constant $c$
being $c_{1}$ or $c_{2}$ as in system (\ref{3:Branicky_analytic}).
Or more intuitively, one may notice that the function $\phi(t)$ is a
function with periodic pulses which switch between $\simeq0$ and
$\simeq1$. The value of $\int_{j}^{j+1/2}\phi(t)dt$,
$j\in\mathbb{N},$ is the area under the $j$th active pulse of
$\phi(t)$. The problem with $\phi(\tau(t))$ is that the durations of
the pulses get shorter as $t$ increases. But we would like to
continue to use $\int_{0}^{1/2}\phi(t)$ as a lower bound for the
area under each active pulse in the speeded-up system. In order to
achieve this, we simply use the pulses with increasing heights; that
is, if the duration of each pulse decreases by at most a factor of
$\chi(t)$, then we increase the height of that pulse accordingly and
thus
maintain the area under each pulse at the level of at least $\int_{0}%
^{1/2}\phi(t)dt$.

Now the details. To obtain $\chi(t)$, we need to know how small
$t_{1}-t_{0}$
can be. Recall from (\ref{Eq:u_2}) that $\tau^{\prime}(t)\leq5e^{\frac{5}{4}%
t}$. Since $\tau(t_{1})-\tau(t_{0})=1/2$ ($\zeta_{\epsilon_{2}}$
switches from $\simeq1$ to $\simeq0$ and vice-versa when its
argument increments by $1/2$),
one has%
\begin{gather*}
\frac{1}{2}=\tau(t_{1})-\tau(t_{0})=\int_{t_{0}}^{t_{1}}\tau^{\prime}%
(t)\leq5\int_{t_{0}}^{t_{1}}e^{\frac{5}{4}t}dt=4\left( e^{\frac
{5}{4}t_{1}}-e^{\frac{5}{4}t_{0}}\right)  \text{ \ \ }\Rightarrow\\
4e^{\frac{5}{4}t_{0}}\left(  e^{\frac{5}{4}\Delta t}-1\right)
\geq\frac{1}{2}\text{ \ \ }\Rightarrow\text{ \ \ }\Delta t\geq\frac{4}{5}%
\ln\left(  1+\frac{1}{8}e^{-\frac{5}{4}t_{0}}\right)
\end{gather*}
where $\Delta t=t_{1}-t_{0}$. It can be proved that
\[
a(t)=\frac{4}{5}\ln\left(  1+\frac{1}{8}e^{-\frac{5}{4}t}\right)
>\frac{e^{-\frac{5}{4}t}}{15}=b(t)
\]
for $t\geq0$ (by showing that $a(0)-b(0)>0$, $\lim_{t\rightarrow\infty}%
\frac{a(t)}{b(t)}=\frac{3}{2}>1$, and $a(t)-b(t)$ has a unique
critical point on $(0,\infty)$ at $t=\frac{4\ln 2}{5}$, which gives
the maximum $\frac{4}{5}\ln\frac{17}{16}-\frac{1}{30}$). We omit
these straightforward
calculations. Hence%
\[
\Delta t>\frac{e^{-\frac{5}{4}t}}{15}%
\]
Therefore it is sufficient to multiply $\phi$ (i.e.$\
\zeta_{\epsilon_{1}}$ and $\zeta_{\epsilon_{2}}$) by
$\chi(t)=15e^{\frac{5}{4}t}$ to obtain (\ref{Eq:newterm}). However,
since we desire an autonomous system and $t$ is replaced by $\tau$
in (\ref{Eq:New_TM2}), we need to express $\chi(t)$ in terms of
$\tau$. This can easily be  done by replacing $t$ with $\tau$: from
(\ref{Eq:u_2}) and $\tau(0)=4$ (see the beginning of this section), one easily concludes that one can take
\begin{equation}
\chi(\tau)=15e^{2\tau}\label{chi_tau}%
\end{equation}
since
$\chi(\tau)\geq15e^{\frac{5}{4}t}$.
Because both $\zeta_{\epsilon_{1}}$ and $\zeta_{\epsilon_{2}}$ are
multiplied by $\chi$, the error present in $z_{1}^{\prime}$ and
$z_{2}^{\prime}$ when these variables are dormant is also multiplied
by the same factor. To cancel this effect, we need to get
$\zeta_{\epsilon_{1}}$ and $\zeta_{\epsilon_{2}}$ closer to 0 in the
same proportion. This is done by using $\zeta
_{\frac{\epsilon_{1}}{\chi(\tau)}}$ and
$\zeta_{\frac{\epsilon_{2}}{\chi (\tau)}}$ in (\ref{Eq:New_TM2})
instead of $\zeta_{\epsilon_{1}}$ and $\zeta_{\epsilon_{2}}$. After
replacing $\zeta_{\epsilon_{1}}$ and $\zeta_{\epsilon_{2}}$ by
$\chi\zeta_{\frac{\epsilon_{1}}{\chi(\tau)}}$ and
$\chi\zeta_{\frac{\epsilon_{2}}{\chi(\tau)}}$ in system (\ref{Eq:New_TM2}%
), it is not difficult to see that the system now simulates the
machine $M$, using Construction \ref{Construction_Branicky}. We also
note that $(z_{f},z_{f},0,0)$ remains a sink of system
(\ref{Eq:New_TM2}). For the sake of readability, we will denote
$\zeta_{\frac{\epsilon _{1}}{\chi(t)}}$ and
$\zeta_{\frac{\epsilon_{2}}{\chi(t)}}$ simply as
$\zeta_{\epsilon_{1}}$ and $\zeta_{\epsilon_{2}}$, respectively.

\subsection{The sink is hyperbolic\label{Sec:Hyperbolic}}

Now that we have shown that $z_{equilibrium}$ is a sink of system
(\ref{Eq:New_TM2}), it remains to prove that $z_{equilibrium}$ is
hyperbolic. It suffices to show that the Jacobian of
(\ref{Eq:New_TM2}) at $z_{equilibrium}$ only admits eigenvalues with
negative real parts.

However there is yet another problem with our system
(\ref{Eq:New_TM2}). We note that the first two equations in system
(\ref{Eq:New_TM2}) rely on a certain type of targeting equations
(see (\ref{3:ODE_target})), which in essence can be reduced to
\begin{equation}
z^{\prime}=-z^{3}\label{Eq:cubic}%
\end{equation}
and for which the equilibrium point 0 is not hyperbolic. Therefore,
instead of choosing an equation of type (\ref{3:ODE_target}) or,
more generally, of
 type (\ref{3:Perturbed_target}), we choose an equation with the format%
\[
z^{\prime}=\left(
c(\overline{b}(t)-z)^{3}+(\overline{b}(t)-z)\right) \phi(t)
\]
Since $\overline{b}(t)-z$ always has the same sign as $c(\overline
{b}(t)-z)^{3}$, it is not difficult to see that adding the term
$(\overline {b}(t)-z)$ will not alter the constructions of the
previous sections. Thus the following (final) system
\begin{equation}
\left\{
\begin{array}
[c]{l}%
z_{1}^{\prime}=(c_{1}(f\circ\sigma^{\lbrack k]}(z_{2})-z_{1})^{3}%
+(f\circ\sigma^{\lbrack k]}(z_{2})-z_{1}))\chi(\tau)\zeta_{\epsilon_{1}}%
(\tau)\\
z_{2}^{\prime}=(c_{2}(\sigma^{\lbrack
n]}(z_{1})-z_{2})^{3}+(\sigma^{\lbrack
n]}(z_{1})-z_{2}))\,\chi(\tau)\zeta_{\epsilon_{2}}(-\tau)\\
u^{\prime}=-u\\
\tau^{\prime}=2(u+\xi_{2}(v_{3}-(m-1),\tau+1))^{\alpha}\xi_{2}(m-v_{3}%
,10)-\xi_{2}(v_{3}-(m-1),10+10\tau^{2})\tau
\end{array}
\right.  \label{Eq:New_TM3}%
\end{equation}
will still simulate our universal Turing machine $M$ and have
$z_{equilibrium}$ as a sink as the previous system
(\ref{Eq:New_TM2}) does.

Now we show that that $z_{equilibrium}$ is a hyperbolic sink of
system (\ref{Eq:New_TM3}) by computing the eigenvalues of the
Jacobian of (\ref{Eq:New_TM3}) at $z_{equilibrium}$.
Note that in
(\ref{Eq:New_TM3}) the components $z_{1}$ and $z_{2}$ actually belong to
$\mathbb{R}^{3}$. Fully expanding the system (\ref{Eq:New_TM3}), one obtains%
\begin{equation}
\left\{
\begin{array}
[c]{l}%
y_{1}^{\prime}=(c_{1}(h_{1}(v_{1},v_{2},v_{3})-y_{1})^{3}+(h_{1}(v_{1}%
,v_{2},v_{3})-y_{1}))\chi(\tau)\zeta_{\epsilon_{1}}(\tau)\\
v_{1}^{\prime}=(c_{2}(\sigma^{\lbrack n]}(y_{1})-v_{1})+(\sigma^{\lbrack n]}%
(y_{1})-v_{1}))\,\chi(\tau)\zeta_{\epsilon_{2}}(-\tau)\\
y_{2}^{\prime}=(c_{1}(h_{2}(v_{1},v_{2},v_{3})-y_{2})^{3}+(h_{2}(v_{1}%
,v_{2},v_{3})-y_{2}))\chi(\tau)\zeta_{\epsilon_{1}}(\tau)\\
v_{2}^{\prime}=(c_{2}(\sigma^{\lbrack
n]}(y_{2})-v_{2})^{3}+(\sigma^{\lbrack
n]}(y_{2})-v_{2}))\,\chi(\tau)\zeta_{\epsilon_{2}}(-\tau)\\
y_{3}^{\prime}=(c_{1}(h_{3}(v_{1},v_{2},v_{3})-y_{3})^{3}+(h_{3}(v_{1}%
,v_{2},v_{3})-y_{3}))\chi(\tau)\zeta_{\epsilon_{1}}(\tau)\\
v_{3}^{\prime}=(c_{2}(\sigma^{\lbrack
n]}(y_{3})-v_{3})^{3}+(\sigma^{\lbrack
n]}(y_{3})-v_{3}))\,\chi(\tau)\zeta_{\epsilon_{2}}(-\tau)\\
u^{\prime}=-u\\
\tau^{\prime}=2(u+\xi_{2}(v_{3}-(m-1),\tau+1))^{\alpha}\xi_{2}(m-v_{3}%
,10)-\xi_{2}(v_{3}-(m-1),10+10\tau^{2})\tau
\end{array}
\right.  \label{Eq:Full}%
\end{equation}
where $z_{1}=(y_{1},y_{2},y_{3})$, $z_{2}=(v_{1},v_{2},v_{3})$, and
$h=(f_{1}\circ\sigma^{\lbrack k]}, f_{2}\circ\sigma^{\lbrack k]},
f_{3}\circ\sigma^{\lbrack k]})=(h_{1},h_{2},h_{3})$.  Since
$h(z_{f})=f(\sigma^{\lbrack k]}(0), \sigma^{\lbrack k]}(0),
\sigma^{\lbrack k]}(m))=z_{f}$ (see the proof of property (I) in
section  \ref{Sec:Hyper}), it follows that, in (\ref{Eq:Full}), at
$z_{equilibrium}=(z_{f}, z_{f}, 0, 0)$,
\begin{align*}
h_{i}(v_{1},v_{2},v_{3}) &  =y_{i}\\
\sigma^{\lbrack n]}(y_{i}) &  =v_{i}%
\end{align*}
for $i=1,2$, and $3$, which reduces many terms to 0.  Moreover, since
the map $f:\mathbb{R}^{3}\rightarrow\mathbb{R}^{3}$ is contracting near $z_{f}=(0, 0, m)$
(see Theorem \ref{Th:simulation}), so is $h$
(with a contraction factor bounded in absolute value by
$0\leq\lambda<1$), thus
\begin{equation}
\left\Vert Dh(z_{f})(z-z_{f})\right\Vert _{\infty}\leq\lambda\left\Vert
z-z_{f}\right\Vert _{\infty}\label{Eq:aux9}%
\end{equation}
Now pick $z-z_{f}=(1/4,0,0)$. Since
\[
Dh(z_{f})(z-z_{f})=\frac{1}{4}\left(  \frac{\partial h_{1}}{\partial x_{1}%
},\frac{\partial h_{2}}{\partial x_{1}},\frac{\partial h_{3}}{\partial x_{1}%
}\right)
\]
using (\ref{Eq:aux9}) and the sup-norm on both sides, we get%
\begin{align*}
\frac{\lambda}{4} &  =\lambda\left\Vert z-z_{f}\right\Vert _{\infty}%
\geq\left\Vert Dh(z_{f})(z-z_{f})\right\Vert _{\infty}\geq\frac{1}%
{4}\left\vert \frac{\partial h_{i}}{\partial x_{1}}\right\vert \\
&  \Rightarrow\text{ \ \ }\lambda\geq\left\vert \frac{\partial h_{i}}{\partial
x_{1}}\right\vert
\end{align*}
for $i=1,2$, and $3$. Picking $z-z_{f}=(0,1/4,0)$ and
$z-z_{f}=(0,0,1/4)$, and proceeding similarly, we reach the
conclusion that all partial derivatives of $h$ are bounded in
absolute value by $\lambda$ at the point $z_{h}$.
Now we define
$\sigma_{i}=$ $\frac{\partial\sigma^{\lbrack n]}(y_{i})}{\partial y_{i}}$.
Notice that for $n\geq1$,
\begin{equation}
\left\vert \sigma_{i}\right\vert \leq\lambda_{1/4}=0.4\pi-1\approx
0.256637\label{Eq:Imp2}%
\end{equation}
from Proposition \ref{Prop:sigma} and the proof of Lemma
\ref{Lemma:hyperbolic}. Therefore the Jacobian matrix $A$ of (\ref{Eq:Full})
at the point $z_{e}=z_{equilibrium}$ is%
\begin{align*}
&  \left[
\begin{array}
[c]{cccc}%
-\left.  \chi(\tau)\zeta_{\epsilon_{1}}(\tau)\right\vert _{z=z_{e}} & \left.
\frac{\partial h_{1}}{\partial v_{1}}\chi(\tau)\zeta_{\epsilon_{1}}%
(\tau)\right\vert _{z=z_{e}} & 0 & \left.  \frac{\partial h_{1}}{\partial
v_{2}}\chi(\tau)\zeta_{\epsilon_{1}}(\tau)\right\vert _{z=z_{e}}\\
\sigma_{1}\left.  \chi(\tau)\zeta_{\epsilon_{2}}(-\tau)\right\vert _{z=z_{e}}
& -\left.  \chi(\tau)\zeta_{\epsilon_{2}}(-\tau)\right\vert _{z=z_{e}} & 0 &
0\\
0 & \left.  \frac{\partial h_{2}}{\partial v_{1}}\chi(\tau)\zeta_{\epsilon
_{1}}(\tau)\right\vert _{z=z_{e}} & -\left.  \chi(\tau)\zeta_{\epsilon_{1}%
}(\tau)\right\vert _{z=z_{e}} & \left.  \frac{\partial h_{2}}{\partial v_{2}%
}\chi(\tau)\zeta_{\epsilon_{1}}(\tau)\right\vert _{z=z_{e}}\\
0 & 0 & \sigma_{2}\left.  \chi(\tau)\zeta_{\epsilon_{2}}(-\tau)\right\vert
_{z=z_{e}} & -\left.  \chi(\tau)\zeta_{\epsilon_{2}}(-\tau)\right\vert
_{z=z_{e}}\\
0 & \left.  \frac{\partial h_{3}}{\partial v_{1}}\chi(\tau)\zeta_{\epsilon
_{1}}(\tau)\right\vert _{z=z_{e}} & 0 & \left.  \frac{\partial h_{3}}{\partial
v_{2}}\chi(\tau)\zeta_{\epsilon_{1}}(\tau)\right\vert _{z=z_{e}}\\
0 & 0 & 0 & 0\\
0 & 0 & 0 & 0\\
0 & 0 & 0 & 0
\end{array}
\right.  \\
&  \left.
\begin{array}
[c]{cccc}%
0 & \left.  \frac{\partial h_{1}}{\partial v_{3}}\chi(\tau)\zeta_{\epsilon
_{1}}(\tau)\right\vert _{z=z_{e}} & 0 & 0\\
0 & 0 & 0 & 0\\
0 & \left.  \frac{\partial h_{2}}{\partial v_{3}}\chi(\tau)\zeta_{\epsilon
_{1}}(\tau)\right\vert _{z=z_{e}} & 0 & 0\\
0 & 0 & 0 & 0\\
-\left.  \chi(\tau)\zeta_{\epsilon_{1}}(\tau)\right\vert _{z=z_{e}} & \left.
\frac{\partial h_{3}}{\partial v_{3}}\chi(\tau)\zeta_{\epsilon_{1}}%
(\tau)\right\vert _{z=z_{e}} & 0 & 0\\
\sigma_{3}\left.  \chi(\tau)\zeta_{\epsilon_{2}}(-\tau)\right\vert _{z=z_{e}}
& -\left.  \chi(\tau)\zeta_{\epsilon_{2}}(-\tau)\right\vert _{z=z_{e}} & 0 &
0\\
0 & 0 & -1 & 0\\
0 & \beta & 0 & -\xi_{2}\left(  1,10\right)
\end{array}
\right]
\end{align*}
where $\beta\in\mathbb{R}$ is well defined (we do not need the
explicit value of this partial derivative, it suffices to know its
existence). To show that $z_{equilibrium}$ is a hyperbolic sink, we
just need to prove that all eigenvalues of the above matrix have
negative real part. Suppose, otherwise, that $A$ admits an
eigenvalue $\mu$ with nonnegative real part. Let
$x=(x_{1},x_{2},x_{3},x_{4},x_{5},x_{6},x_{7},x_{8})\in\mathbb{C}^{8}$
be an eigenvector of $A$ associated to $\mu$. Since $x$ is an
eigenvector, $x\neq0$. We also note that, from (\ref{chi_tau}),
$\left.\chi(\tau)\right\vert
_{z=z_{e}}=\chi(0)=5$. Then, from the equation $Ax=\mu x$ and (\ref{Eq:zetas}%
), one gets the following equations%
\begin{align}
\left.  \chi(\tau)\zeta_{\epsilon_{1}}(\tau)\right\vert _{z=z_{e}}\left(
-x_{1}+\left.  \frac{\partial h_{1}}{\partial v_{1}}\right\vert _{z=z_{e}%
}x_{2}+\left.  \frac{\partial h_{1}}{\partial v_{2}}\right\vert _{z=z_{e}%
}x_{4}+\left.  \frac{\partial h_{1}}{\partial v_{3}}\right\vert _{z=z_{e}%
}x_{6}\right)   &  =\mu x_{1}\text{ \ \ }\Longrightarrow\text{ \ \ }%
\nonumber\\
\frac{1}{1+\frac{\mu}{\left.  \chi(\tau)\zeta_{\epsilon_{1}}(\tau)\right\vert
_{z=z_{e}}}}\left(  \left.  \frac{\partial h_{1}}{\partial v_{1}}\right\vert
_{z=z_{e}}x_{2}+\left.  \frac{\partial h_{1}}{\partial v_{2}}\right\vert
_{z=z_{e}}x_{4}+\left.  \frac{\partial h_{1}}{\partial v_{3}}\right\vert
_{z=z_{e}}x_{6}\right)   &  =x_{1}\text{ \ \ }\Longrightarrow\text{
\ \ }\nonumber\\
\left\vert \left.  \frac{\partial h_{1}}{\partial v_{1}}\right\vert _{z=z_{e}%
}x_{2}+\left.  \frac{\partial h_{1}}{\partial v_{2}}\right\vert _{z=z_{e}%
}x_{4}+\left.  \frac{\partial h_{1}}{\partial v_{3}}\right\vert _{z=z_{e}%
}x_{6}\right\vert  &  \geq\left\vert x_{1}\right\vert \text{ \ \ }%
\Longrightarrow\text{ \ \ }\nonumber\\
\lambda\left(  \left\vert x_{2}\right\vert +\left\vert x_{4}\right\vert
+\left\vert x_{6}\right\vert \right)   &  \geq\left\vert x_{1}\right\vert
\label{Eq:eigen1}%
\end{align}
We also obtain from $Ax=\mu x$ and (\ref{Eq:Imp2})%
\begin{gather}
\left.  \chi(\tau)\zeta_{\epsilon_{2}}(\tau)\right\vert _{z=z_{e}}\left(
\sigma_{1}x_{1}-x_{2}\right)  =\mu x_{2}\text{ \ \ }\Longrightarrow\text{
\ \ }\nonumber\\
\sigma_{1}x_{1}=\left(  1+\frac{\mu}{\left.  \chi(\tau)\zeta_{\epsilon_{2}%
}(-\tau)\right\vert _{z=z_{e}}}\right)  x_{2}\text{ \ \ }\Longrightarrow\text{
\ \ }\nonumber\\
\lambda_{1/4}\left\vert x_{1}\right\vert \geq\left\vert x_{2}\right\vert
\label{Eq:eigen3}%
\end{gather}
Similarly the following can be derived:%
\begin{equation}
\left\{
\begin{array}
[c]{c}%
\lambda\left(  \left\vert x_{2}\right\vert +\left\vert x_{4}\right\vert
+\left\vert x_{6}\right\vert \right)  \geq\left\vert x_{3}\right\vert \\
\lambda\left(  \left\vert x_{2}\right\vert +\left\vert x_{4}\right\vert
+\left\vert x_{6}\right\vert \right)  \geq\left\vert x_{5}\right\vert
\end{array}
\right.  \text{ \ \ and \ \ }\left\{
\begin{array}
[c]{c}%
\lambda_{1/4}\left\vert x_{3}\right\vert \geq\left\vert x_{4}\right\vert \\
\lambda_{1/4}\left\vert x_{5}\right\vert \geq\left\vert x_{6}\right\vert
\end{array}
\right.  \label{Eq:eigen2}%
\end{equation}
Then it follows from (\ref{Eq:eigen1}), (\ref{Eq:eigen3}), and
(\ref{Eq:eigen2}) that
\[
\left\{
\begin{array}
[c]{c}%
\lambda\lambda_{1/4}(\left\vert x_{1}\right\vert +\left\vert x_{3}\right\vert
+\left\vert x_{5}\right\vert )\geq\lambda\left(  \left\vert x_{2}\right\vert
+\left\vert x_{4}\right\vert +\left\vert x_{6}\right\vert \right)
\geq\left\vert x_{1}\right\vert \\
\lambda\lambda_{1/4}(\left\vert x_{1}\right\vert +\left\vert x_{3}\right\vert
+\left\vert x_{5}\right\vert )\geq\lambda\left(  \left\vert x_{2}\right\vert
+\left\vert x_{4}\right\vert +\left\vert x_{6}\right\vert \right)
\geq\left\vert x_{3}\right\vert \\
\lambda\lambda_{1/4}(\left\vert x_{1}\right\vert +\left\vert x_{3}\right\vert
+\left\vert x_{5}\right\vert )\geq\lambda\left(  \left\vert x_{2}\right\vert
+\left\vert x_{4}\right\vert +\left\vert x_{6}\right\vert \right)
\geq\left\vert x_{5}\right\vert
\end{array}
\right.
\]
Adding the inequalities, we get
\[
3\lambda\lambda_{1/4}(\left\vert x_{1}\right\vert +\left\vert x_{3}\right\vert
+\left\vert x_{5}\right\vert )\geq\left\vert x_{1}\right\vert +\left\vert
x_{3}\right\vert +\left\vert x_{5}\right\vert
\]
which implies (note that $0<3\lambda_{1/4}<1$)%
\[
\lambda(\left\vert x_{1}\right\vert +\left\vert x_{3}\right\vert +\left\vert
x_{5}\right\vert )\geq\left\vert x_{1}\right\vert +\left\vert x_{3}\right\vert
+\left\vert x_{5}\right\vert
\]
Since $\left\vert \lambda\right\vert <1$, the above holds true only if
$x_{1}=x_{2}=x_{3}=0$. Moreover, from (\ref{Eq:eigen3}) and (\ref{Eq:eigen2})
one also concludes $x_{2}=x_{4}=x_{6}=0$. Thus the equation $Ax=\mu x$ is
reduced to
\[
\left\{
\begin{array}
[c]{c}%
-x_{7}=\mu x_{7}\\
-\xi_{2}\left(  1,10\right)  x_{8}=\mu x_{8}%
\end{array}
\right.
\]
Since $0<\xi_{2}\left(  1,10\right)  <1$ and $\mu$ has nonnegative
real part, the system above is satisfied only if $x_{7}=x_{8}=0$. In
other words, any eigenvector associated to the eigenvalue $\mu$ is a
zero vector, which is clearly a contradiction. Therefore, all
eigenvalues of $A$ have negative real part, i.e.\ $z_{equilibrium}$
is a hyperbolic sink for the ODE (\ref{Eq:New_TM3}).

We note that all functions used in system (\ref{Eq:New_TM3}) are
computable (they are defined by composing usual functions of
analysis with some of their analytic continuations, and therefore
are computable; see \cite{PR89}, \cite{BHW08}).

\subsection{Proof of Theorem \ref{Th:Main_continuous}\label{Sec:final}}

In the previous subsections \ref{Sec:Simulate_TM} and
\ref{Sec:Hyperbolic}, we have shown that the universal Turing
machine $M$ can be simulated by the ODE (\ref{Eq:New_TM3}) which has
a hyperbolic sink at
$z_{equilibrium}=(z_{f},z_{f},0,0)\in\mathbb{N}^{8}$. The lemma
below follows from the results in those sections.

\begin{lem}
\label{Lema:aux}Suppose that $x_{0}\in\mathbb{N}^{3}$ codes an
initial configuration of $M$ simulated by system (\ref{Eq:New_TM3}),
and let $z_{0}=(x_{0}, x_{0}, 1, 4)$. Then for every point
$z\in\mathbb{R}^{8}$ satisfying $\left\Vert z-z_{0}\right\Vert
<1/16$, one has:
\end{lem}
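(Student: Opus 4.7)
The plan is to prove both the ``halting implies convergence'' and ``non-halting implies non-convergence'' parts of the lemma by combining the simulation fidelity established in Sections \ref{Sec:Branickyanalytic} and \ref{Sec:Simulate_TM} with the sink/basin analysis in Section \ref{Sec:Hyper}. First I would observe that the hypothesis $\|z-z_{0}\|<1/16$ falls comfortably inside the perturbation tolerance allowed by the construction: the simulation was designed for initial perturbations of size at most $1/8$ (the width needed so that $\sigma^{[k]}$ contracts the initial error below the targeting threshold $\gamma$, see Theorem \ref{Th:simulation} and the discussion following (\ref{3:Branicky_analytic})). In particular, choosing $z$ near $z_{0}$ only affects the first two components $z_{1}(0),z_{2}(0)$ (which get perturbed within $B(x_{0},1/16)\times B(x_{0},1/16)$) and the auxiliary components $u(0),\tau(0)$ (perturbed within $1/16$ of $(1,4)$). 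These auxiliary perturbations are harmless: $u^{\prime}=-u$ still forces $u(t)\to 0$ exponentially, and the estimate $\tau^{\prime}(0)>1$ from Section \ref{Sec:Simulate_TM} is robust enough to survive a $1/16$-perturbation of $\tau(0)$, so $\tau$ still grows without bound while $M$ has not halted.

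Next I would invoke the simulation property of system (\ref{Eq:New_TM3}) proved in Section \ref{Sec:Simulate_TM}: so long as $M$ has not reached the halting state (equivalently, while $v_{3}(t)\leq m-3/4$), one has, after each full iteration cycle $[T_{j},T_{j+1}]$ in the $\tau$-time,
\[
\|z_{1}(T_{j+1})-f_{M}^{[j+1]}(x_{0})\|<1/4,\qquad \|z_{2}(T_{j+1})-f_{M}^{[j+1]}(x_{0})\|<1/4.
\]
This follows from the perturbed-targeting bound (\ref{4:Error_target_perturbed}) applied recursively, using the crucial non-expansion property (\ref{Eq:Th_main}) of $f$ to keep the error bounded by $1/4$ from cycle to cycle; the $1/16$ initial slack is absorbed by the contraction factor $\lambda_{1/4}$ of $\sigma^{[k]}$ exactly as in the derivation of (\ref{Eq:eta}).

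For the halting case, assume $M$ halts on $x_{0}$ after $N$ steps, so $f_{M}^{[N]}(x_{0})=z_{f}$. By the simulation fidelity above, at $\tau$-time $N$ the trajectory satisfies $z_{1},z_{2}\in B(z_{f},1/4)$, and consequently $v_{3}$ enters the band $|v_{3}-m|\leq 1/4$. Once this happens, $\alpha$ flips sign (cf.\ (\ref{Eq:Def_alpha})) and the term $\xi_{2}(m-v_{3},10)$ in (\ref{Eq:u_2_derivative}) becomes small, which together with $u\to 0$ and $\tau^{\prime}\simeq -a\tau$ (as derived in Section \ref{Sec:Hyper}) drives $(u,\tau)\to(0,0)$. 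The trajectory therefore eventually enters $U_{f}=B(z_{f},1/8)\times B(z_{f},1/8)\times(-1,2)\times(-1,5)$, at which point property (II) from Section \ref{Sec:Hyper} guarantees convergence to $z_{equilibrium}$.

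For the non-halting case, the simulation keeps $z_{1}(T_{j}),z_{2}(T_{j})$ in $B(f_{M}^{[j]}(x_{0}),1/4)$ for every $j$; since $f_{M}^{[j]}(x_{0})\ne z_{f}$ for all $j$ (because $z_{f}$ is the unique halting configuration and is a fixed point), the third coordinate $v_{3}$ stays bounded away from $m$ at the end of every cycle. Hence $z_{2}(t)$ cannot converge to $z_{f}$, and a fortiori the full trajectory cannot converge to $z_{equilibrium}$. The main technical obstacle I anticipate is uniformly controlling the $1/16$ perturbation as it propagates through the speeded-up cycles (whose length in $t$ shrinks like $e^{-5t/4}$): one must verify that the compensating factor $\chi(\tau)=15e^{2\tau}$ introduced in (\ref{chi_tau}), together with the refined precisions $\epsilon_{i}/\chi(\tau)$, still make the targeting estimate (\ref{target_speed}) hold uniformly in $j$ even when the initial condition is shifted by up to $1/16$ in all eight coordinates; the argument is essentially the one in Section \ref{Sec:Simulate_TM} but must be re-run with the enlarged initial error budget.
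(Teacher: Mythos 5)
Your proposal is correct and follows essentially the same route the paper intends: the paper gives no separate argument for this lemma beyond asserting that it "follows from the results" of Sections \ref{Sec:Hyper}--\ref{Sec:Hyperbolic}, and your assembly — the $1/16$ perturbation fits inside the $\varepsilon<1/8$ simulation budget, halting forces $z_1,z_2$ (hence $v_3$) into the halting regime so that $\tau'\simeq-a\tau$, $u\to 0$, the trajectory enters $U_f$ and property (II) gives convergence, while non-halting keeps $v_3$ at distance at least $3/4$ from $m$ at the end of every cycle, precluding convergence — is exactly that intended combination. Your closing remark about re-running the speeded-up-cycle estimates of Section \ref{Sec:Simulate_TM} with the perturbed $u(0),\tau(0)$ is the right (and only) point where constants need rechecking, and they do go through with ample slack.
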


\begin{enumerate}
\item If $M$ halts on $x_{0}$, then the trajectory starting at $z$ converges
to the hyperbolic sink $z_{equilibrium}$.

\item If $M$ does not halt on $x_{0}$, then the trajectory starting at $z$
does not converge to $z_{equilibrium}$.
\end{enumerate}

We now proceed to the proof of Theorem \ref{Th:Main_continuous}.
\smallskip

\begin{proof}
[of Theorem \ref{Th:Main_continuous}]   Let $g:
\mathbb{R}^6\times (-1,+\infty)\times
(-1,+\infty)\rightarrow\mathbb{R}^8$ be the function in the
right-hand side of system (\ref{Eq:New_TM3}) and let $s=(z_{f},
z_{f}, 0, 0)$. Then $g$ is analytic. As proved in previous
subsections, $s$ is a hyperbolic sink of system (\ref{Eq:New_TM3}),
where $z_{f}$ corresponds to the unique halting configuration of the
universal Turing machine $M$ simulated by system (\ref{Eq:New_TM3}).
Let us denote the basin of attraction of $s$ as $W_{final}$. It then
follows that, for any $z_{0}=(x_{0}, x_{0}, 1, 4)$ with
$x_0\in\mathbb{N}^3$ being an initial configuration of $M$, $M$
halts on $x_0$ iff $z_{0}\in W_{final}$. Moreover, from Lemma
\ref{Lema:aux}, any trajectory starting at a point inside
$B(z_{0},1/16)$ will either converge to $s$ if $M$ halts on $x_{0}$
or not converge to $s$ if $M$ does not halt on $x_{0}$. In other
words, $B(z_{0},1/16)$ is either inside $W_{final}$ (if $M$ halts on
$x_{0}$) or inside $\mathbb{R}^{8}-W_{final}$ (if $M$ does not halt
on $x_{0}$).

Let $\mathfrak{R}$ denote $\mathbb{R}^6\times (-1,+\infty)\times
(-1,+\infty)$. Now suppose that $W_{final}$ is a computable open
subset of $\mathfrak{R}$. Then the distance function
$d_{\mathfrak{R}\backslash W_{final}}$ is computable. Therefore, for
any $x_{0}\in\mathbb{N}^3$,  we can compute
$d_{\mathfrak{R}\backslash W_{final}}(z_{0})$ with a precision of
$1/40$ which yields some rational $q$, where $z_{0}=(x_{0}, x_{0},
1, 4)$. We note that either $d_{\mathfrak{R}\backslash
W_{final}}(z_{0})=0$ (if $z_{0}\notin W_{final}$) or
$d_{\mathfrak{R}\backslash W_{final}}(z_{0})\geq1/16$ (if $z_{0}\in
W_{final}$). In the first case, $q\leq 1/40$, while in the second
case, $q\geq \frac{1}{16}-\frac{1}{40}=\frac{3}{80}$. The following
algorithm then solves the Halting problem, which is absurd: on
initial configuration $x_{0}$, compute $d_{\mathbb{R}^{8}\backslash
W_{final}}(z_{0})$, $z_{0}=(x_{0}, x_{0}, 1, 4)$, with a precision
of $1/40$ yielding some rational $q$. If $q\leq1/40$ then $M$ does
not halt on $x_{0}$; if $q\geq3/80$, then $M$ halts on $x_{0}$.
Therefore $W_{final}$ cannot be computable.

\end{proof}

\bigskip
\footnotesize
\noindent\textit{Acknowledgments.}
D.\ Gra\c{c}a was partially supported by
\emph{Funda\c{c}\~{a}o para a Ci\^{e}ncia e a Tecnologia} and EU FEDER
POCTI/POCI via SQIG - Instituto de Telecomunica\c{c}\~{o}es through the FCT project PEst-OE/EEI/0008/2013.


\normalsize
\baselineskip=17pt


\bibliographystyle{plain}
\bibliography{ContComp}

\end{document}